\theoremstyle{break}
\theoremstyle{remark}
\newaliascnt{rem}{definition}
\newtheorem{rem}[rem]{Remark}
\newaliascnt{exa}{definition}
\newaliascnt{cor}{definition}
\newaliascnt{lem}{definition}
\newtheorem{lem}[lem]{Lemma}
\newaliascnt{conj}{definition}
\newaliascnt{quest}{definition}
\theoremstyle{break}
\newtheorem{thm}{Theorem}
\newaliascnt{prop}{definition}
\newtheorem{prop}[prop]{Proposition}
\DeclareMathOperator{\marked}{marked}
\DeclareMathOperator{\Aut}{Aut}
\DeclareMathOperator{\bound}{bound}
\DeclareMathOperator{\lbl}{label}
\DeclareMathOperator{\Mod}{Mod}
\DeclareMathOperator{\rel}{rel}
\DeclareMathOperator{\Der}{Der}
\DeclareMathOperator{\dev}{dev}
\DeclareMathOperator{\SL}{SL}
\DeclareMathOperator{\RE}{Re}
\DeclareMathOperator{\IM}{Im}
\DeclareMathOperator{\Push}{P}
\DeclareMathOperator{\Dir}{Dir}
\DeclareMathOperator{\WMS}{WMS}
\newcommand\myeq{\stackrel{\mathclap{\scriptsize\mbox{up to Dehn twist}}}{=\joinrel=}}
\newcommand{\RR}{\mathbb{R}}
\newcommand{\QQ}{\mathbb{Q}}
\newcommand{\CC}{\mathbb{C}}
\newcommand{\NN}{\mathbb{N}}
\newcommand{\ZZ}{\mathbb{Z}}
\newcommand{\MM}{\mathcal{M}}
\newcommand{\HH}{\mathcal{H}}
\newcommand{\Cyl}{\mathcal{C}}
\newcommand{\Bound}{\mathcal{B}}
\newcommand{\Acc}{\mathcal{A}}
\newcommand{\TM}{\widetilde{M}}
\newcommand{\TN}{\widetilde{N}}
\title{Non-affine horocycle orbit closures on strata of translation surfaces: new examples}
\author{Lucien Pierre Odilon Clavier}
\begin{document}
 
\maketitle
\clearpage\mbox{}\clearpage

\begin{abstract}
We investigate specific examples of locally-defined real vector-fields on strata of translation surfaces. Integrating $\SL(2,\RR)$-loci of Veech surfaces along these vector-fields yield interesting new examples of horocycle-invariant ergodic measures. These measures are supported on closed immersed manifolds with boundary that approach the boundary of strata in a novel way.
\end{abstract}



\begin{acknowledgements}
I warmly thank my thesis advisor John Smillie for his great contribution of knowledge, ideas, funding and time. I also want to greatly thank Chenxi Wu for many interesting conversations and lots of help.
\end{acknowledgements}

\contentspage
\figurelistpage

\normalspacing \setcounter{page}{1} \pagenumbering{arabic}
\pagestyle{cornell} \addtolength{\parskip}{0.5\baselineskip}

\tikzset{middlearrow/.style={
    decoration={markings, mark= at position 0.5 with {\arrow{#1}},
    },
    postaction={decorate}
  }
}

\chapter{Introduction}
Translation surfaces and their strata have been extensively studied and represent a rich theory with many developments. See for instance the expository papers \cite{expo:Wright} and \cite{expo:Zorich}, and the seminal paper \cite{hm79}.
The interesting action of $\SL(2,\RR)$ on strata is related to the intriguing behavior of translation surfaces and presents many similarities with the action of a connected subgroup $H$ of a Lie group $G$ that is generated by unipotent elements, acting on a homogeneous quotient $X$ of $G$.

For homogeneous spaces, the careful study of flows that normalize the horocycle flow led to the celebrated Ratner's theorems \cite{ratner123}, \cite{ratner5}, \cite{ratner4} and \cite{ratner6} for the measure classification theorem which asserts that ergodic invariant measures are homogeneous, and \cite{ratner7} for the orbit equidistribution theorem which states that orbits equidistribute in their closure. 
We refer the interested reader to \cite{dwm} for an introduction to these theorems. It includes an extensive history of the study of unipotent flows on homogeneous spaces.

Strata of translation surfaces have many features in common with homogeneous spaces. 
Namely, $\SL(2,\RR)$ is generated by unipotent elements, every stratum has an affine (or affine orbifold) structure, and a canonical invariant probability measure that can be thought of as a Haar measure. 
The two settings present some critical differences as well, the most striking being that for genus higher than 1, there are non-complete horocycle-invariant vector-field on strata (see Chapter \ref{sec:background}).
In the case of strata of translation surfaces, the study of horocycle-invariant ergodic measures is a powerful tool to establish broader results about translation surfaces, see for instance \cite{cw10}, \cite{emwm06}, \cite{ems03} and \cite{sw04}.
Moreover, some results that are very similar to Ratner's theorems were proven in the case of $\SL_2(\RR)$-invariant measures and orbits in the groundbreaking papers \cite{EM} and \cite{EMM}. In particular, $\SL_2(\RR)$ orbit closures are immersed invariant affine submanifold of strata, and equidistribution theorems similar to the homogeneous case hold.
Equidistribution theorems were also obtained in \cite{bsw15} for horocycle-invariant measures in the stratum $\HH(1,1)$.

On the other hand, the careful analysis of real-rel flows in \cite{sw15} revealed examples of horocycle-invariant ergodic measures supported on a manifold with boundary and infinitely generated fundamental group, thus emphatically not homogeneous or affine.
In order to develop Ratner-type tools in strata, it is crucial to study the behavior of flows and vector-fields that normalize the horocycle flow.
Because there are few invariant flows on strata that normalize the horocycle flow, new tools need to be introduced. A fundamental idea in \cite{sw15} is to replace them by invariant vector-fields that can be used to create new horocycle-invariant measures. 
The first example of a vector-field investigated in \cite{sw15} is real-rel, which implies that directions and lengths of loops stay unchanged under the integration of these vector-fields. This case is studied in depth, and the obstruction to integrate vector-fields for all times comes from a saddle connection shrinking to a point.  See also \cite{bsw15} where these vector-fields are examined in detail. 
Moreover, the generalized trajectories obtained by approaching non-complete trajectories by nearby-defined trajectories (see \cite{sw15}, Section 7) hit the boundary of strata for a discrete set of times.  

In this thesis, we exhibit new behaviors of horocycle-invariant ergodic measures on strata that come from non-rel real (locally-defined) vector-fields, and we investigate what happens when the obstruction to integrate such vector-fields comes from a whole subsurface with boundary degenerating to a point, rather than just a saddle connection. In that case, we will see examples that show that generalized trajectories can hit boundaries of strata in a non-discrete set of times.

After some background and preliminaries, we start in Section \ref{sec:WMSpush} by studying the example of the locus of the Eierlegende Wollmilchsau, a translation surface taking its name from the mythical creature “egg-laying wool-milk-sow” that produces anything one might need.
It has extraordinary properties and produced many counterexamples, see \cite{fm14}, \cite{hs08} and \cite{ss13}.
In Section \ref{sec:WMSpush}, by ``pushing'' this locus along a particular vector-field, we obtain a space $\Push(\MM_{\WMS})$ which is the support of a horocycle-invariant ergodic measure and that presents a new behavior that we describe as ``infinite catastrophes in finite time''.
See Theorem \ref{thm:balls} which gives a specific homeomorphism between a slice of an open ball in $\MM_{\WMS}$ and its image in $\Push(\MM_{\WMS})$.
In this specific case, the ``infinite catastrophes'' phenomenon is due to a cylinder getting more and more horizontal. This is sketched in Figure \ref{fig:Nhalf} and studied in detail in Chapter \ref{chap:BS}.
In Chapter \ref{chap:generalCase} we look at the more general case of Veech surfaces $M$ for which the vector $v$ to push along has some finiteness property, namely $\Dir(M, v)$ is finite, and we show that in that case as well, ``infinite catastrophes'' exactly happen when a cylinder gets more and more horizontal, see Theorem \ref{thm:Acc}.
In Chapter \ref{chap:BS}, we also investigate how $\Push(\MM_{\WMS})$ approaches a Borel-Serre type of blow-up of the limit point in a simple case, see Theorem \ref{thm:spiraling}.

\chapter{Background}
\label{sec:background}
\section{Strata of translation surfaces}

A translation surface is a connected, orientable, compact surface $M$ with a finite set of points (called singular points) $\Sigma \subset M$ such that transition maps coming from charts on $M\setminus \Sigma$ are translations.

The Euclidean metric on $\RR^2$ induces a metric on a translation surface $M$ with set of singular points $\Sigma$ via pull-backs of charts on $M\setminus \Sigma$.
Under this metric, points in $M\setminus \Sigma$ have neighborhoods isometric to Euclidean disks, and singular points have neighborhoods isometric to cyclic covers of Euclidean disks. 
We define the order of a singular point to be $n-1$ where $n$ is the order of such a cyclic cover.
The order of a singular point is 0 when it admits a neighborhood isometric to a Euclidean disk.
By the Gauss-Bonnet Theorem, the sum of the orders of singular points is $2g-2$ with $g$ the genus of $M$.

Given two translation surfaces $M$ and $N$ with sets of singular points $\Sigma_M$ and $\Sigma_N$ respectively, 
we define an affine isomorphism to be an orientation-preserving homeomorphism $\phi: M \to N$ 
such that $\phi(\Sigma_M) = \phi(\Sigma_N)$ and which is affine on charts of $M\setminus \Sigma_M$.
We define the derivative $\Der(\phi)$ of $\phi$ to be the linear part of $\phi$. If $\Der(\phi)$ is the identity, then we say that $\phi$ is an equivalence of translation surfaces, and we say $M$ and $N$ are equivalent.

We define a marked translation surface $(S, \psi, M)$ as a triple where $S$ is a connected, orientable, compact surface, with a finite set of points $\Sigma_S \subset S$, $M$ is translation surface with set of singular points $\Sigma_M$ and $f: S \to M$ a homeomorphism with $f(\Sigma_S)=\Sigma_M$. 
We say that two marked translation surfaces $(S, \psi_1, M)$ and $(S, \psi_2, M)$ are equivalent if $\psi_1$ and $\psi_2$ are isotopic rel $\Sigma_M$.
Note that $S$ and $M$ must have the same genus $g$. Fixing an order on $\Sigma_S$ once and for all induces an order of $\Sigma_M$ via $f$, so we think of $\Sigma_M$ as an ordered set $(x_1, \ldots, x_n)$. Write $\kappa=(k_1, \ldots, k_n)$ for the $n$-tuple of orders of the $x_i$. This is a partition of $2g-2$.

Given a genus $g$ and a partition $\kappa$ of $2g-2$, define the stratum $\HH^{\marked}(\kappa)$ to be the set of all marked translation surfaces with partition $\kappa$.
Integrating paths in $M$ gives rise to a well-defined map $\dev: \HH^{\marked}(\kappa)\to H^1(M, \Sigma_M; \CC)\simeq \CC^{2g+n-1}$, called the developing map. This map was first introduced in \cite{dh75} under the name ``period map''. Pulling back the Euclidean structure of $\CC^{2g+n-1}$ makes $\HH^{\marked}(\kappa)$ a complex manifold which is complete exactly when $n=1$.
As in \cite{bsw15}, we want to define a version of strata for which singular points have labels that are well defined.
Thus, define two surfaces of type $\kappa$ to be label-preserving translation equivalent when they are translation equivalent via a label-preserving map.
We then define the stratum $\HH^{\lbl}(\kappa)$ or simply $\HH(\kappa)$ as the set of translation surfaces $M$ with partition $\kappa$, up to label-preserving translation equivalence. 
The surjective map $(S, \psi, M) \mapsto M$ makes $\HH(\kappa)$ into a complex orbifold $\HH(\kappa) = \HH^{\marked}(\kappa)/\Mod^{\lbl}(S,\Sigma_S)$, with $\Mod^{\lbl}(S, \Sigma_S)$ the mapping class group of homeomorphisms of $S$ that preserve $\Sigma_S$, with label. 

Pulling back the Lebesgue measure on $\CC^{2g+n-1}$ gives a measure on $\HH^{\marked}(\kappa)$, which is foliated by affine leaves corresponding to the surfaces of same area. 
We restrict to the leaf of area-1 marked translation surfaces $\HH^{\marked}_1(\kappa)$ and disintegrate that measure to endow $\HH^{\marked}_1(\kappa)$ with a measure $\mu$. 
The push-forward of $\mu$ on the space $\HH_1(\kappa)$ of area-1 labeled translation surfaces gives it the structure of a measured space with finite volume. 

Finally, $\SL(2,\RR)$ acts on both $\HH^{\marked}_1(\kappa)$ and $\HH_1(\kappa)$ by post-composition of the charts of $M$.
$\HH_1(\kappa)$ has thus many of the features of homogeneous spaces $G/\Lambda$ with $G$ a Lie group containing a subgroup $H\simeq \SL(2, \RR)$ and $\Lambda$ a lattice of $G$. Namely, complex orbifold structure replaces homogeneity, $\mu$ replaces the Haar measure on $G/\Lambda$ and the $\SL(2, \RR)$ action replaces the action of $H$. An important difference is that horocycle-invariant vector-fields $\HH_1(\kappa)$ is not complete in general.

\section{Mirzakhani-Wright partial compactification of strata}
\label{sec:MirWri}
In this section, we recall briefly the Mirzakhani-Wright partial compactification of strata developed in \cite{mzw15} where several examples are carefully worked out.
From this viewpoint, saddle connections and sub-surfaces are allowed to degenerate to points or 1-dimensional trees, and the way this degeneration happens does not affect the limit.
This is in contrast with Borel-Serre types of compactifications where all the degenerating subsurfaces are scaled back to have area 1 and contribute in the limit. 
A Borel-Serre type description will be discussed in Chapter \ref{chap:BS} in a simple case.

Following \cite{mzw15}, as we want to allow a surface to degenerate into a disjoint union of several surfaces, we define strata of multicomponent area-1 marked (resp. labeled) surfaces as $\HH^{\marked}_1(\kappa_1) \times \ldots \times \HH^{\marked}_1 (\kappa_k)$ (resp. $\HH_1(\kappa_1) \times \ldots \times \HH_1(\kappa_k)$) for some partitions $\kappa_i$, where each component surface $M^i$ is scaled by $\frac1k$ so that the multicomponent surface $(M^1, \ldots , M^k)$ has area 1.

%
Let $\HH$ and $\HH'$ be two strata of multicomponent area-1 surfaces. 
Let $M_n$ be a sequence of surfaces in $\HH$ and $M$ a surface in $\HH'$, with tuples of singular points $\Sigma_n$ and $\Sigma$ respectively.
We say that $M_n$ converges to $M$ if there exist decreasing neighborhoods $U_n$ of $\Sigma$ in $M$ with $\cap U_n = \Sigma$, and maps $g_n: M \setminus U_n \to M_n$ that are diffeomorphisms onto their image and that verify:
\begin{enumerate}
	\item given any embedded triangle $T$ of $M\setminus \Sigma$, $g_n(T)$ is an embedded triangle $T_n$ of $M_n\setminus \Sigma_n$ for $n$ large enough, and for any side $s_i$ of $T$, $\dev(M_n, g_n(s_i)) \to \dev(M, s_i)$, i.e. the sides $g_n(s_i)$ of $T_n$ have lengths and directions converging to that of~$s_i$.
	\item the injectivity radii of points that do not lie in the image of $g_n$ converge to 0 uniformly.
\end{enumerate}



Let $M_n$ be a converging sequence of translation surfaces. If for each $n$ there is a saddle connection $\sigma_n\subset M_n$ (resp. a cylinder $\Cyl_n \subset M_n$) with length $l_n$ verifying $l_n\to 0$ (resp. with the supremum of injectivity radii $r_n$ verifying $r_n\to 0$), we say that a saddle connection (resp. a cylinder) degenerates as $n\to \infty$.
In Lemmas \ref{lem:ex1}, \ref{lem:ex2} and \ref{lem:ex3} below we investigate explicit examples of convergence and divergence in the sense of Mirzakhani-Wright. 
These form a catalog of behaviors that we will encounter below. 
\begin{lem}
\label{lem:ex1}
Let $M$ be a marked (resp. labeled) translation surface with a cylinder $\Cyl\subset M$. 
For any $h>0$, define $M_h$ to be the surface obtained from $M$ by letting $M\setminus \Cyl$ fix while the height of $\Cyl$ is set to $h$ and no extra shear happens inside $\Cyl$ 
(otherwise put, the twist $\widetilde{t} \in \RR$ (resp. $t\in S^1= [0,1]/(0\sim 1)$) stays constant), and by then rescaling so that $M_h$ has area 1.
Then the limit of $M_h$ in the sense of Mirzakhani-Wright as $h\to 0$ is the one you expect, namely the surface obtained from $M$ by cutting out $\Cyl$ and identifying the two end-loops $\gamma_i$ along the Interval Exchange Map corresponding to flowing inside $\Cyl$ from $\gamma_1$ to $\gamma_2$ in the direction normal to $\gamma_1$, then rescaled to have area 1. 
If the flow inside $\Cyl$ in the direction normal to $\gamma_1$ connects two singular points, then a saddle connection degenerates as $n\to \infty$.
Otherwise, there is no saddle connection degenerating as $n\to \infty$.
\end{lem}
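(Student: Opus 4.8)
The plan is to identify the candidate limit surface $M_0$ explicitly, produce the comparison maps required by the definition of Mirzakhani--Wright convergence, and then read off the dichotomy about saddle connections. The surface $M_0$ is obtained from the compact bordered translation surface $M\setminus\Cyl$, whose boundary is the two loops $\gamma_1,\gamma_2$ (each a concatenation of saddle connections of $M$), by gluing $\gamma_1$ to $\gamma_2$ along the first-return map $\phi$ of the straight-line flow across $\Cyl$ in the direction normal to $\gamma_1$. Because the twist is held fixed as $h$ varies, $\phi$ is independent of $h$, and it is exactly an interval exchange transformation whose breakpoints are the images of the singular points on $\gamma_1$ and on $\gamma_2$. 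Since $\phi$ is a translation in charts away from its breakpoints, the quotient $M_0$ is a translation surface; its singular points are the seam points where the angle contributed from the $\gamma_1$-side plus that from the $\gamma_2$-side differs from $2\pi$, and an Euler characteristic / cone-angle count shows $M_0$ has the genus of $M$ with possibly several singular points merged, so it lies in the stratum one expects. Finally, the area of $M_h$ before renormalization is $\mathrm{Area}(M\setminus\Cyl)+h\cdot\mathrm{circ}(\Cyl)$, so the area-$1$ renormalization factors of $M_h$ converge to that of $M_0$.

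For the convergence, fix $h_n\to 0$ and build diffeomorphisms onto their image $g_n\colon M_0\setminus U_n\to M_{h_n}$ as follows. Fix a collar $\gamma_1\times(-\delta,\delta)$ of the seam loop of $M_0$, split as a $\delta$-collar of $\gamma_1$ and a $\delta$-collar of $\gamma_2$ inside $M\setminus\Cyl$. Off this collar and off $\Sigma_{M_0}$, let $g_n$ be the tautological translation inclusion of $M\setminus\Cyl$ into $M_{h_n}$; on the collar, let $g_n$ fix the $\gamma_1$-coordinate and reparametrize the transverse interval by a diffeomorphism $(-\delta,\delta)\to(-\delta,\delta+h_n)$ equal to the identity near $-\delta$ and to the shift by $h_n$ near $\delta$, so $g_n$ opens the seam into the thin cylinder $\Cyl_{h_n}$. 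Take $U_n$ to be the union of metric balls of radius $\varepsilon_n\to 0$ about the points of $\Sigma_{M_0}$. Then $M_{h_n}\setminus g_n(M_0\setminus U_n)$ is $\Cyl_{h_n}$ together with the shrinking regions of $M_{h_n}$ around the clusters of singular points that collapse to single points of $M_0$, so condition (2) follows from $h_n\to 0$ and the shrinking of those regions. For condition (1) it suffices to treat geodesic triangles $T\subset M_0\setminus\Sigma_{M_0}$ disjoint from the seam loop — a general triangle is subdivided along the seam saddle connections into such pieces, and edge periods are additive — and for such $T$, once $h_n$ is below the distance from $T$ to the seam, $g_n(T)=T$ is an embedded triangle of $M_{h_n}$ with unchanged edge periods. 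Together with the area computation this identifies the Mirzakhani--Wright limit as $M_0$.

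The first saddle-connection claim is immediate: if the vertical leaf through a singular point $P\in\gamma_1$ reaches a singular point $Q\in\gamma_2$, the vertical segment from $P$ to $Q$ inside $\Cyl_{h_n}$ contains no singular point in its interior, so it is a saddle connection of $M_{h_n}$ of length exactly $h_n$, which tends to $0$ after renormalization. For the second claim I would show $\liminf_n\mathrm{sys}(M_{h_n})>0$, where $\mathrm{sys}$ is the length of the shortest saddle connection. Given a saddle connection $\sigma$ of $M_{h_n}$, cut it along $\gamma_1\cup\gamma_2$ into arcs in $M\setminus\Cyl$ and arcs crossing $\Cyl_{h_n}$; since $\sigma$ is a single geodesic all its crossing arcs run the same way, and for $\sigma$ shorter than $\mathrm{circ}(\Cyl)$ no crossing arc winds around the cylinder. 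If $\sigma$ lies inside $\overline{\Cyl_{h_n}}$, it is either a fixed saddle connection of $M$ contained in $\gamma_1\cup\gamma_2$, or a straight segment from a singular point $P\in\gamma_1$ to a singular point $Q\in\gamma_2$ of horizontal displacement $d_{P,Q}$, a quantity depending only on $(M,\Cyl)$; under the hypothesis $d_{P,Q}\neq 0$ (vanishing of $d_{P,Q}$ would mean the vertical leaf from $P$ reaches $Q$), so $|\sigma|\geq|d_{P,Q}|$, which is bounded below over the finite set of singular points.

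It remains to handle a $\sigma$ having an arc inside $M\setminus\Cyl$, and this is the step I expect to be the main obstacle. I would establish a positive constant $\varepsilon_0=\varepsilon_0(M)$ such that any geodesic segment in the fixed bordered surface $M\setminus\Cyl$ with endpoints on $\partial(M\setminus\Cyl)$ and interior avoiding singular points has length $\geq\varepsilon_0$, \emph{unless} its two endpoints both lie near a singular point common to $\gamma_1$ and $\gamma_2$ — by a compactness argument, a degenerating such segment must have its endpoints converge to such a common singular point. In the first case $|\sigma|\geq\varepsilon_0$; in the second case, tracing the degeneration back through the crossing arc adjacent to that short excursion shows that, as $h_n\to 0$, a vertical leaf runs from a singular point of $\gamma_1$ to a singular point of $\gamma_2$, contradicting the hypothesis. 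Combining the cases gives $\liminf_n\mathrm{sys}(M_{h_n})>0$, so no saddle connection degenerates. All the remaining ingredients — describing $M_0$, building $g_n$, verifying (1) and (2), and the in-cylinder computation — are routine once the collar picture is set up.
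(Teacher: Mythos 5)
The paper does not actually prove this lemma: its ``proof'' is the single sentence ``See the first example in the Example section of \cite{mzw15},'' so you have supplied a genuine argument where the paper only cites. Your overall strategy --- identify $M_0$, build collar-opening comparison maps $g_n$, verify the two Mirzakhani--Wright conditions, then a systole lower bound for the no-degeneration direction --- is sound and is the natural way to make the cited example rigorous.

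There is, however, one step stated incorrectly. You claim that, by compactness, any degenerating geodesic segment in $M\setminus\Cyl$ with endpoints on $\partial(M\setminus\Cyl)$ must have its endpoints converging to a singular point \emph{common to $\gamma_1$ and $\gamma_2$}. That is not true. The endpoints must converge to a singular point of $\partial(M\setminus\Cyl)$ (near a regular boundary point the boundary is locally a straight line and admits no short chord into the interior), but that singular point need only lie on one of $\gamma_1$, $\gamma_2$: any singular boundary point whose cone angle seen from the $M\setminus\Cyl$ side exceeds $\pi$ admits arbitrarily short geodesic chords. Fortunately, your next sentence already contains the correct way to finish: the crossing arcs of $\sigma$ adjacent to the short excursion are nearly vertical and must terminate at singular points on the opposite boundary loop (else the arc of $\sigma$ beyond them would be long), which forces a vertical leaf from a singular point of $\gamma_1$ to one of $\gamma_2$, contradicting the hypothesis. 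So the ``tracing back'' argument should be applied to \emph{every} singular limit point, not merely to those shared by $\gamma_1$ and $\gamma_2$; with that change the lower bound on the systole goes through, and with it your proof. A smaller technical note: in verifying condition (1) you reduce to triangles disjoint from the seam and invoke additivity of periods; this is acceptable in spirit, but strictly speaking the definition asks $g_n(T)$ itself to be an embedded triangle, so one should either remark that the various formulations of Mirzakhani--Wright convergence agree here or exhaust $M_0$ by triangulations adapted to the seam.
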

\begin{proof}
See the first example in the Example section of \cite{mzw15}.
\end{proof}

\begin{lem}
\label{lem:ex2}
Let $M_n$ be a sequence of marked (resp. labeled) translation surfaces in the same stratum and such that $M_n$ converges to some surface $M$ in the sense of Mirzakhani-Wright. 
Assume that for each $n$ there is a saddle connection $\sigma_n$ in $M_n$ and that the lengths $|\sigma_n|$ converge to 0.
For each $n$, define $\widetilde{M_n}$ as the surface (rescaled to have area 1) obtained from $M_n$ by cutting along $\sigma_n$ and attaching a cylinder $\Cyl_n$ (see Figure \ref{fig:crossCyl} for the example for which $M_n$ is a torus with two singular points of order 0) of height $h_n <1$ with end-loops at each of the two open slits. 
Then $\widetilde{M_n}$ converges to $M$ in the sense of Mirzakhani-Wright, and there is a cylinder degenerating as $n\to \infty$
\end{lem}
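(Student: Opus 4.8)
The plan is to recycle the maps that witness the convergence $M_n\to M$, after deleting from their domains a small neighbourhood of the preimage of $\sigma_n$ and recording the inserted cylinder as part of the uncovered region. Fix decreasing neighbourhoods $V_n$ of $\Sigma$ with $\bigcap_n V_n=\Sigma$ and maps $g_n\colon M\setminus V_n\to M_n$ realising $M_n\to M$. The cylinder $\Cyl_n$ is a flat cylinder of circumference $|\sigma_n|$ and height $h_n<1$, so gluing $\Cyl_n$ to $M_n$ along $\sigma_n$ produces a surface of area $1+|\sigma_n|h_n$; writing $\lambda_n=(1+|\sigma_n|h_n)^{-1/2}$ for the renormalising factor, $\widetilde{M_n}$ is the image of that surface under the homothety of ratio $\lambda_n$, so it contains a copy $\iota_n(\lambda_n\cdot(M_n\setminus\sigma_n))$ of the rescaled complement of $\sigma_n$ together with a flat cylinder $\lambda_n\Cyl_n$ of circumference $\lambda_n|\sigma_n|$. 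Since $|\sigma_n|\to0$ and $0<h_n<1$ we have $\lambda_n\to1$, a fact used repeatedly below.

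The first step is that a degenerating saddle connection is, seen through the $g_n$, concentrated near $\Sigma$: for every compact $K\subseteq M\setminus\Sigma$ there is $N$ such that $g_n(K)$ is disjoint from the closed $|\sigma_n|$-neighbourhood $N_n$ of $\sigma_n$ in $M_n$ for all $n\ge N$. This is because $g_n$ is, on the fixed compact $K$, uniformly close for large $n$ to a map preserving the flat structure (a consequence of condition (i) of the definition of convergence, taken up in the last paragraph), so $\operatorname{injrad}_{M_n}$ is bounded below by a positive constant along $g_n(K)$ for $n$ large; whereas every point of $N_n$ lies within $2|\sigma_n|$ of a singular point, hence has injectivity radius at most $2|\sigma_n|\to0$. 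Therefore $W_n:=V_n\cup g_n^{-1}(N_n)$ meets each such $K$ only finitely often, so $\widetilde U_n:=\bigcup_{m\ge n}W_m$ is a decreasing sequence of neighbourhoods of $\Sigma$ with $\bigcap_n\widetilde U_n=\Sigma$. Let $\widetilde g_n\colon M\setminus\widetilde U_n\to\widetilde{M_n}$ be $g_n$ restricted to $M\setminus\widetilde U_n$, followed by the rescaling of ratio $\lambda_n$ and the inclusion $\iota_n$; it is a diffeomorphism onto its image, each of these three maps being one.

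It remains to verify the two conditions for $(\widetilde U_n,\widetilde g_n)$ and to exhibit a degenerating cylinder. For condition (i): a fixed embedded triangle $T\subseteq M\setminus\Sigma$ lies in $M\setminus\widetilde U_n$ for $n$ large, and by the first step $g_n(T)$ then stays at distance $>|\sigma_n|$ from $\sigma_n$, so the surgery does not touch its intrinsic geometry; thus $\widetilde g_n(T)$ is an embedded triangle whose side periods are $\lambda_n$ times those of $g_n(T)$, and these converge to $\dev(M,s_i)$ because $\lambda_n\to1$ and $(V_n,g_n)$ satisfies condition (i). For condition (ii): $\widetilde{M_n}\setminus\widetilde g_n(M\setminus\widetilde U_n)$ is contained in the union of the rescaled cylinder $\lambda_n\Cyl_n$ together with its gluing curves, of the $\iota_n$-image of the rescaled $M_n\setminus g_n(M\setminus V_n)$, and of the $\iota_n$-image of the rescaled $g_n(\widetilde U_n\setminus V_n)$; the first has injectivity radius at most $\lambda_n|\sigma_n|/2\to0$ (its waist closed geodesic is a short geodesic loop through each of its points), the second has injectivity radii tending to $0$ uniformly by condition (ii) for $(V_n,g_n)$, and the third has injectivity radii tending to $0$ uniformly because each of its points is a $g_n$-image of a point within $o(1)$ of $\Sigma$. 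Finally $\lambda_n\Cyl_n\subseteq\widetilde{M_n}$ has supremum of injectivity radii at most $\lambda_n|\sigma_n|/2\to0$, which is exactly the assertion that a cylinder degenerates as $n\to\infty$.

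The step I expect to be the main obstacle is the injectivity-radius control of the last two pieces in condition (ii), i.e.\ making rigorous the slogan that $g_n$ is asymptotically an isometry. What is needed is that $\operatorname{injrad}_{M_n}(g_n(x))$ and $\operatorname{dist}_M(x,\Sigma)$ are comparable for $x$ near $\Sigma$ and $n$ large — in particular that $g_n^{-1}(N_n)$, hence $\widetilde U_n\setminus V_n$, shrinks to $\Sigma$, and that $g_n$ carries such a thin collar to a set of small injectivity radius. This has to be deduced from the definition of convergence: condition (i) forces $g_n$ to be uniformly $C^0$-close, on fixed compacta of $M\setminus\Sigma$, to a map preserving the flat structure, while condition (ii) lets one transport an embedded round ball around $g_n(x)$ back to an embedded round ball around $x$. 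Once this comparison is in place the remaining verifications are the routine bookkeeping above; it is also what makes the passage to $\bigcup_{m\ge n}$ in the construction of $\widetilde U_n$ harmless.
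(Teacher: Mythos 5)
Your argument matches the paper's in substance: reuse the Mirzakhani--Wright data $(V_n,g_n)$, restrict the domain so that $g_n$ lands away from $\sigma_n$, post-compose with the rescaled inclusion $\iota_n$ into $\widetilde{M_n}$, and verify (i) and (ii) from $\lambda_n\to1$ and $\operatorname{injrad}(\lambda_n\Cyl_n)\le\lambda_n|\sigma_n|/2\to0$. The extra care you take to excise a neighbourhood of $\sigma_n$ from the domain is a genuine improvement on the paper's very terse ``$\widetilde{U_n}=U_n\cup\sigma$''.

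One remark on your monotonization. Setting $\widetilde{U_n}=\bigcup_{m\ge n}W_m$ is what creates the difficulty you flag at the end: for $x\in W_m\setminus V_n$ with $m>n$ you only know $g_m(x)\in N_m$, and the axioms give no direct control on $g_n(x)$. The cleaner route, closer to what the paper intends, is $\widetilde{U_n}=V_n\cup g_n^{-1}(N_n)$ with no union over $m$; this immediately gives $g_n(\widetilde{U_n}\setminus V_n)\subset N_n$, which has small injectivity radius by construction, and the loss of monotonicity is then handled by the same ``approximate isometry'' comparison you describe (or by simply noting that the Mirzakhani--Wright definition is insensitive to replacing $U_n$ by a monotone envelope once that comparison is in hand). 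You should also record a small point you currently assume silently: surgery can a priori \emph{inflate} injectivity radii. If the short geodesic loop at $x\in M_n$ crosses $\sigma_n$, it does not survive as a loop in $\widetilde{M_n}$; but then $x$ lies within $\operatorname{injrad}_{M_n}(x)+|\sigma_n|$ of an endpoint of $\sigma_n$, hence of a singular point, and that bound does transfer through $\iota_n$ up to the factor $\lambda_n\to1$. These are routine verifications that the paper skips entirely, so your proposal is essentially complete modulo the bookkeeping you yourself identify.
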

\begin{proof}
Let $U_n \subset M$ and $g_n: M \setminus U_n \to M_n$ as in the definition of Mirzakhani-Wright convergence. 
Let $\widetilde{U_n} = U_n \cup \sigma$, $\iota_n: M_n \setminus \sigma_n \to \widetilde{M_n}$ the inclusion map (with scaling) and $\widetilde{g_n} = \iota_n \circ (g_n |_{M_n\setminus \sigma_n})$. 
Then since the injectivity radii of points in $\Cyl_n$ is less than or equal to $|\sigma_n|$, we see that $\widetilde{M_n}$, $M$, $\widetilde{U_n}$ and $\widetilde{g_n}$ verify the Mirzakhani-Wright convergence definition. We use the fact that the scaling factor $\frac1{1+|\sigma_n|h_n}$ tends to 1 to show (i).
\end{proof}

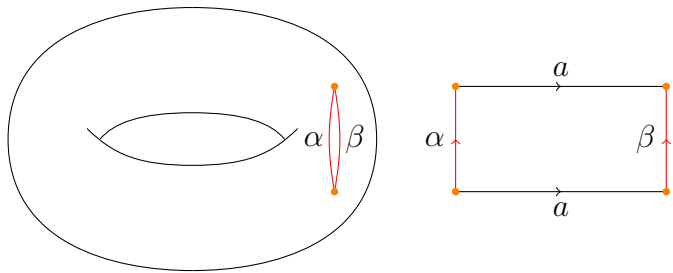
\begin{figure}
\begin{center}
\begin{tikzpicture}[scale=.7]

\begin{scope} [shift={(-5,0)}]
\draw (-3.5,0) .. controls (-3.5,2) and (-1.5,2.5) .. (0,2.5);
\draw[xscale=-1] (-3.5,0) .. controls (-3.5,2) and (-1.5,2.5) .. (0,2.5);
\draw[rotate=180] (-3.5,0) .. controls (-3.5,2) and (-1.5,2.5) .. (0,2.5);
\draw[yscale=-1] (-3.5,0) .. controls (-3.5,2) and (-1.5,2.5) .. (0,2.5);

\draw (-2,.2) .. controls (-1.5,-0.3) and (-1,-0.5) .. (0,-.5) .. controls (1,-0.5) and (1.5,-0.3) .. (2,0.2);

\draw (-1.75,0) .. controls (-1.5,0.3) and (-1,0.5) .. (0,.5) .. controls (1,0.5) and (1.5,0.3) .. (1.75,0);

  \draw[color=red] (2.7,-1) to [bend left=10] (2.7,1);
    \node[left] at (2.7,0) {$\alpha$};
  \draw[color=red] (2.7,-1) to [bend right=10] (2.7,1);
    \node[right] at (2.7,0) {$\beta$};
  \fill[orange] (2.7,1) circle (2pt);
  \fill[orange] (2.7,-1) circle (2pt);
\end{scope}

\draw[middlearrow={>},red] (0, -1)--(0, 1);
    \node[left] at (0,0) {$\alpha$};
\draw[middlearrow={>},red] (4, -1)--(4, 1);
    \node[left] at (4,0) {$\beta$};
\draw[middlearrow={>}] (0, 1)--(4, 1);
    \node[above] at (2,1) {$a$};
\draw[middlearrow={>}] (0, -1)--(4, -1);
    \node[below] at (2,-1) {$a$};

  \fill[orange] (0,1) circle (2pt);
  \fill[orange] (4,1) circle (2pt);
  \fill[orange] (0,-1) circle (2pt);
  \fill[orange] (4,-1) circle (2pt);

\end{tikzpicture}
\end{center}
\caption{Left: a torus with a slit. Right: a horizontal cylinder. We glue these two surfaces with boundary along $\alpha$ and $\beta$. The new singular point (in orange) of the resulting surface has order 2. The sequence of such surfaces $M_n$ with slits of length $\frac1n$ has a Mirzakhani-Wright limit which is a torus with a singular point of order 0.}
\label{fig:crossCyl}
\end{figure}


\begin{lem}
\label{lem:ex3}
For any positive number $h$, let $M_h$ as in Lemma \ref{lem:ex1}. 
Let $h_n$ be any sequence of positive numbers with $h_n \to \infty$. 
Then $M_{h_n}$ does not converge in the sense of Mirzakhani-Wright.
\end{lem}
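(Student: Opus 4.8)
The plan is to show that if $M_{h_n}$ converged to some surface $M'$ in a stratum of multicomponent area-1 surfaces, then we could extract enough control on the core curve of the degenerating cylinder to derive a contradiction with $h_n \to \infty$. Write $\gamma$ for a closed curve on $M$ freely homotopic to the core of $\Cyl$, and let $w = \dev(M,\gamma) \in \CC$ be its holonomy (a nonzero complex number, since $\Cyl$ is an honest cylinder). On the rescaled surface $M_h$, before rescaling the cylinder has circumference $|w|$ and height $h$, so its area is $|w|\cdot h$ and the total area of $M_h$ before rescaling is $A_0 + |w| h$ where $A_0 = \mathrm{area}(M) > 0$ is fixed. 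Thus the rescaling factor is $\lambda_h = 1/\sqrt{A_0 + |w| h}$, which tends to $0$ as $h \to \infty$.

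First I would argue that a transversal arc crossing $\Cyl$ cannot survive as an embedded triangle side in the limit. Fix an embedded triangle $T$ in $M \setminus \Sigma$ one of whose sides $s$ is a short transversal arc crossing the core of $\Cyl$ once. In $M_{h_n}$ (rescaled), the corresponding transversal arc has a segment inside the cylinder of Euclidean length at least $\lambda_{h_n} h_n \to \infty$ (since $\lambda_{h_n} h_n = h_n/\sqrt{A_0 + |w|h_n} \to \infty$). By the Mirzakhani–Wright definition, if $M_{h_n} \to M'$ via maps $g_n : M'\setminus U_n \to M_{h_n}$, then for $n$ large the image $g_n(T)$ would be an embedded triangle in $M_{h_n}$ with side lengths converging to those of $T$; but any embedded triangle in $M_{h_n}$ containing the unboundedly long transversal segment through the cylinder would itself have a side of unbounded length, a contradiction — unless the triangle $T$ gets pushed entirely into the shrinking neighborhoods $U_n$ of $\Sigma$, i.e. unless $g_n$ is not defined on $T$ for large $n$. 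But $T$ is a fixed triangle with $\overline{T} \subset M \setminus \Sigma$... here I would instead phrase the argument directly on $M_{h_n}$: the complement in $M_{h_n}$ of the set of points with injectivity radius bounded below by any fixed $\varepsilon$ has bounded area, while the cylinder $\lambda_{h_n}\Cyl$ has area $|w| h_n / (A_0 + |w| h_n) \to 1$ and every point of it with distance $\geq \varepsilon$ from both boundary loops has injectivity radius $\geq \varepsilon$ once $n$ is large (the circumference $\lambda_{h_n}|w| \to 0$, so injectivity radius inside the cylinder is governed by the distance to the boundary, not the circumference). Hence for $n$ large, $M_{h_n}$ contains a region of area close to $1$ consisting of points with injectivity radius $\geq \varepsilon$, all of which must lie in the image of $g_n$ by condition (ii).

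So $g_n$ must cover (almost all of) the long thin cylinder, and I would then push towards the contradiction via condition (i): take an embedded triangle $T' \subset M' \setminus \Sigma'$ whose image $g_n(T')$ meets the interior of the cylinder $\lambda_{h_n}\Cyl$. Since the cylinder is embedded and isometric to $(\RR/\lambda_{h_n}|w|\ZZ)\times(0,\lambda_{h_n}h_n)$ with $\lambda_{h_n}|w|\to 0$ and $\lambda_{h_n}h_n\to\infty$, any embedded triangle inside it has all three sides of length at most $\lambda_{h_n}|w|\to 0$ if it is contained in a single ``fundamental strip,'' or else wraps around; in all cases one checks that an embedded triangle meeting the core has at least one side of length $\to 0$ or the triangle degenerates. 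Comparing with (i), which forces $\dev(M_{h_n}, g_n(s_i)) \to \dev(M', s_i)$ with $\dev(M',s_i)$ a fixed nonzero vector, gives the contradiction. The cleanest incarnation: pick two points $p, q$ in the interior of $\Cyl$ at Euclidean distance, in $M_{h_n}$, comparable to $\mathrm{diam}$ but with the property that on $M'$ their preimages are joined by a fixed embedded triangle side of length $\ell > 0$; the rescaled cylinder's intrinsic diameter in the ``vertical'' direction is $\lambda_{h_n} h_n \to \infty$ while in the core direction it is $\lambda_{h_n}|w|/2 \to 0$, so the geometry of $g_n(T')$ cannot match that of the fixed $T'$ for large $n$.

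The main obstacle, and the step I would spend the most care on, is the bookkeeping in the previous paragraph: making precise that an embedded Euclidean triangle sitting inside a cylinder whose circumference shrinks to $0$ and whose height grows to $\infty$ cannot have all side lengths bounded away from $0$ and $\infty$ simultaneously, and matching this against the fixed triangulation of the putative limit $M'$ supplied by the Mirzakhani–Wright definition. Once that combinatorial-geometric fact is pinned down, condition (i) is directly violated and we conclude that no limit $M'$ can exist, which is the assertion of the lemma. (An alternative, perhaps shorter, route would be to invoke Lemma \ref{lem:ex1} together with the observation that Mirzakhani–Wright limits, when they exist, are unique: if $M_{h_n}$ converged, then since $M_h$ for bounded $h$ converges to the surface described in Lemma \ref{lem:ex1} as $h\to 0$, a diagonal/continuity argument on the parameter $h$ would produce two incompatible limits. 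I would mention this but carry out the direct argument above, as it is self-contained.)
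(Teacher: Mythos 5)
Your setup is right — you correctly identify the rescaling factor, you correctly identify injectivity radius as the relevant quantity, and you correctly observe that the long thin cylinder is where the geometry must be examined. But the central claim you make about it is exactly backwards, and this error invalidates the rest of the argument. You write that because the circumference $\lambda_{h_n}|w|\to 0$, ``injectivity radius inside the cylinder is governed by the distance to the boundary, not the circumference,'' so that interior points have injectivity radius $\ge\varepsilon$. In fact, in a flat cylinder of circumference $c$ every point lies on a closed geodesic of length $c$ wrapping around the core, so its injectivity radius is at most $c/2$, regardless of the distance to the boundary loops. Hence as the rescaled circumference goes to $0$, the injectivity radius of \emph{every} point of the cylinder goes to $0$ — the circumference dominates precisely because it is the smaller of the two quantities. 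Once this is corrected, your region of area $\to 1$ with injectivity radius $\ge\varepsilon$ does not exist, and the subsequent argument about $g_n$ being forced to cover the cylinder collapses.

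Interestingly, the corrected statement leads immediately to the paper's actual proof, which is much shorter than what you attempted: one checks that \emph{every} point of $M_{h_n}$ has injectivity radius going to $0$ — points outside $\Cyl$ because they have a fixed-length path to a singular point that gets rescaled by the shrinking area-normalization factor, and points inside $\Cyl$ because of the shrinking circumference. Then condition (i) of Mirzakhani--Wright convergence fails, since the image of a non-degenerate embedded triangle of the putative limit $M'$ would contain a Euclidean disk of definite inradius, forcing some point of $M_{h_n}$ to have injectivity radius bounded away from $0$. Your intended ``bookkeeping'' about embedded triangles in the cylinder is therefore unnecessary. I would also caution against the alternative route you sketch in your final parenthetical: the sequence $h_n\to\infty$ bears no relation to the $h\to 0$ limit of Lemma \ref{lem:ex1}, so there is no diagonal or continuity argument that produces two incompatible limits from a single sequence; ``uniqueness of limits'' is not the obstruction here.
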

\begin{proof}
	Assume by contradiction that $M_{h_n} \to M'$ with $M'$ in some stratum $\HH'$.
	We will prove that the injectivity radii of points in $M_{h_n}$ converge to 0, thus (i) cannot be satisfied.
	Let $x\in M$.
	If $x \in M\setminus \Cyl$, let $\gamma_x$ be a path in $M \setminus \Cyl$ from $x$ to a singular point of $M$. 
	Otherwise, let $\gamma_x$ be a loop around $\Cyl$ in a direction parallel to its end-loops. This is a saddle connection of length $l$ with $l$ the circumference of $\Cyl$. 
	Since the scaling factor from $M$ to $M_{h_n}$ is $\frac1{1+lh_n}$, then the injectivity radius of $x$ in $M_{h_n}$ is at most $\frac{l}{1+lh_n}$.
\end{proof}

\chapter{Pushes of loci}
\label{chap:push}
%
\section{Preliminaries}
Define the geodesic flow 
\[
g_t = \left( \begin{array}{cc}
e^{t/2} & 0 \\
0 & e^{-t/2} \end{array} \right) 
\] 
and the horocycle flow
\[
h_t = \left( \begin{array}{cc}
1 & t \\
0 & 1 \end{array} \right).
\]
The flow $h_t$ is the flow of interest here, and $g_t$ is interesting because it normalizes~$h_t$: 
$$g_t h_s g_{-t} = h_{s.e^t}.$$
We say that a measure $\mu$ (resp. a closed set $F$) is horocycle-invariant if the pushforwards $\mu(h_t^{-1}(\cdot))$ of that measure (resp. the closed sets $h_t^{-1}(F)$) are all equal.

    Throughout this section, we consider a stratum of marked area-1 translation surfaces $\HH^{\marked} =\HH^{\marked}_1(\kappa)$ and we fix a marked translation surface $(S, \psi, \underline{M})\in \HH^{\marked}$. We write $\MM^{\marked}$ for the closure of the $\SL(2,\RR)$-orbit of $\underline{M}$ in $\HH^{\marked}$. We also write $\HH^{\lbl}$ for $\HH^{\lbl}_1(\kappa)$ and $\MM^{\lbl} \subset \HH^{\lbl}$ for the image of $\MM^{\marked}$ under the map $(S, \psi, M)\to M$.

Let $v$ be a vector in $ \dev(\HH^{\marked})\subset \CC^{2g+n-1}$. By pulling back the constant vector-field equal to $v$ on $\dev(\HH^{\marked})$ via the developing map, we obtain a well-defined vector-field on $\HH^{\marked}$.
    If $v\in \RR^{2g+n-1}\subset \CC^{2g+n-1}$ then the induced vector-field on $\HH^{\marked}$ commutes with the horocycle flow and we call it a real vector-field. If its image in $H^1(\underline{M}; \CC)$ is trivial, then we say $v$ is real-rel.
    We would like to define such vector-fields on $\HH^{\lbl}$. 
A possible obstruction here is that the image of a vector $v \in T\HH^{\marked}$ in $T\HH^{\lbl}$ is not necessarily finite, so we will work in a finite cover instead.

Therefore, consider the monodromy action of $\pi_1(\HH^{\lbl}, \underline{M})$ on the fiber of~$M$ $\Mod^{\lbl}(S, \Sigma_S). (S, \psi, \underline{M})$. 
It induces an action on $T_{\underline{M}}\HH^{\lbl}$.
By restricting to $\pi_1(\MM^{\lbl}, \underline{M})$, we obtain an action of $\pi_1(\MM^{\lbl}, \underline{M})$ on $T_{\underline{M}}\HH^{\lbl}$. 
Given a vector $v\in T_{\underline{M}}\HH^{\lbl}$, we write $\Dir(\underline{M}, v) $ for the orbit of a vector $v$ under this action.
Thanks to \cite{w15} where the computation of $\Dir(M, v)$ is carefully detailed in the case where $M$ is an abelian cover of the pillow-case, we know that infinitely many examples of pairs $(M,v)$ can be constructed for which $\Dir(M,v)$ is a finite set. See the table of examples in \cite{w15}[Section 6].  We will study an explicit example of such a pair in Section \ref{sec:WMSpush}.

Letting $k=|\Dir(\underline{M}, v)|$, we now define $\widehat{\HH}$ to be the $k$-fold cover of $\HH^{\lbl}$ corresponding to the kernel of $\pi_1(\MM^{\lbl}, \underline{M})\to \Aut(T_{\underline{M}}\HH^{\lbl})$, a subgroup of $\pi_1(\MM^{\lbl}, \underline{M})$ of index $k$.
We also define $\widehat{\MM}\subset \widehat{\HH}$ to be the pre-image of $\MM^{\lbl}$ under the covering projection. It is a $k$-fold cover of $\MM^{\lbl}$.
Now by construction, $\pi_1(\widehat{\MM}, \underline{M})$ acts trivially on $T_{\underline{M}}\widehat{\HH}$ and the vectorfield defined by $v$ on $\HH^{\marked}$ descends to a well-defined vector-field on $\widehat{\HH}$.
Moreover, the action of $\SL(2,\RR)$ on $\HH^{\lbl}$ lifts to $\widehat{\HH}$, the pullback $\widehat{\mu}$ of the Masur-Veech measure $\mu$ on $\HH^{\lbl}$ is still $\SL(2, \RR)$-invariant and it is finite because $\widehat{\MM}$ is a finite cover of $\MM^{\lbl}$.

We define the push $\Push(M,v)$ of a translation surface $M$ in $\MM^{\marked}$ (resp. in $\widehat{\MM}$) along the vector $v$ as the surface obtained by flowing $M$ along the vector-field induced by $v$ on $\HH^{\marked}$ (resp. on $\widehat{\HH}$) for time 1, when defined.
We show in Proposition \ref{prop:normal} that, when the $\SL(2,\RR)$-orbit of $\underline{M}$ is closed, then $\Push(M,v) $ is well-defined on a set of positive measure of $\widehat{\MM}$, and thus by a standard ergodicity argument, it is well-defined for $\widehat{\mu}$-almost every $M$ in $\widehat{\MM}$.
See also \cite{mw14}[Theorem 1.2] which implies that in that case, $\Push(M,v) $ is well-defined as soon as it does not have a horizontal saddle connection. Again, this implies that the push of $M$ along $v$ is well-defined for almost every $M$.

We now define the push of $\MM^{\marked}$ (resp. of $\widehat{\MM}$) along the vector $v$ as the closure of the space of all pushes $\Push(M, v)$ with $M\in \MM^{\marked}$ (resp. $M\in \widehat{\MM}$). We write this push $\Push(\MM^{\marked}, v)$ (resp. $\Push(\widehat{\MM}, v)$).
Because the vector-field induced by $v$ commutes with $h_t$, the closed set $\Push(\MM^{\marked}, v)$ (resp. $\Push(\widehat{\MM},v)$) is horocycle-invariant.
Moreover, the push-forward $\Push_\ast \nu$ of the natural measure on the locus $\widehat{\MM}$ under the map $\Push: (M,v)\mapsto \Push(M,v)$ is a horocycle-invariant measure on $\Push(\widehat{\MM},v)$ of finite volume.
Finally, we obtain an invariant closed set $\Push(\MM^{\lbl},v)$ with invariant measure by taking the image of $\Push(\widehat{\MM}, v)$ under the covering projection to $\HH^{\lbl}$.

It will be useful to define the subvector-space $F \subset T_{\underline{M}} \HH^{\lbl}$ to be the set of real vectors $v$ for which $|\Dir(\underline{M}, v)|$ is finite. 
The set $\RR-\rel$ of real-rel vectors is a subspace of $F$. 
At this point, it is not clear how the vector-space structure of $F$ translates in terms of pushes of $\underline{M}$.
For instance, if we can describe pushes of $\underline{M}$ along vectors $v_1$ and $v_2$, it does not follow that we understand pushes along $v_1 + v_2$.
Recall that the computation of $\Dir(\underline{M}, v)$ was completely described in \cite{w15} in the case of abelian covers of the pillow-case.

\section{A push of the Wollmilchsau locus}
\label{sec:WMSpush}

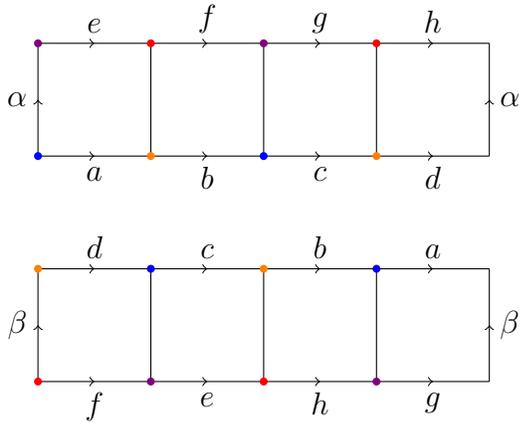
\begin{figure}
\begin{center}
\begin{tikzpicture}[scale=1.5]
\foreach \i in {0,...,3}{%
    \foreach \j in {0,...,3}{%
        \draw[middlearrow={>}] (\i,\j)--(\i+1,\j) ;}}
\foreach \j in {0,2}{%
    \foreach \i in {0,4}{%
        \draw[middlearrow={>}] (\i,\j)--(\i,\j+1) ;}
    \foreach \i in {1,...,3}{%
        \draw[-] (\i,\j)--(\i,\j+1) ;}}
\foreach \i in {0, 2}{%
    \fill[red] (\i,0) circle (1pt);
    \fill[red] (\i+1,3) circle (1pt);
    \fill[orange] (\i,1) circle (1pt);
    \fill[orange] (\i+1,2) circle (1pt);
    \fill[blue] (\i,2) circle (1pt);
    \fill[blue] (\i+1,1) circle (1pt);
    \fill[violet] (\i,3) circle (1pt);
    \fill[violet] (\i+1,0) circle (1pt);}

\node[left] at (0,2.5) {$\alpha$};
\node[right] at (4,2.5) {$\alpha$};
\node[left] at (0,0.5) {$\beta$};
\node[right] at (4,0.5) {$\beta$};

\node[above] at (0.5,3) {$e$};
\node[above] at (1.5,3) {$f$};
\node[above] at (2.5,3) {$g$};
\node[above] at (3.5,3) {$h$};
\node[below] at (0.5,2) {$a$};
\node[below] at (1.5,2) {$b$};
\node[below] at (2.5,2) {$c$};
\node[below] at (3.5,2) {$d$};

\node[above] at (0.5,1) {$d$};
\node[above] at (1.5,1) {$c$};
\node[above] at (2.5,1) {$b$};
\node[above] at (3.5,1) {$a$};
\node[below] at (0.5,0) {$f$};
\node[below] at (1.5,0) {$e$};
\node[below] at (2.5,0) {$h$};
\node[below] at (3.5,0) {$g$};

\end{tikzpicture}
\end{center}
\caption{The WMS as a square-tiled marked translation surface with gluings.}
\label{fig:WMS}
\end{figure}

We now investigate a specific example using the locus of the Eierlegende Wollmilchsau.
The Eierlegende Wollmilchsau (WMS) $M_{\WMS}$ is the square-tiled marked (resp. labeled) translation surface represented as the union of squares with gluings in Figure \ref{fig:WMS}.
It has genus 3 and 4 singular points of order 1, i.e. its stratum is $\HH_{\WMS} =\HH_1(1,1,1,1)$. Since $M_{\WMS}$ is square-tiled, its $\SL_2(\RR)$-orbit $\MM_{\WMS}$ is closed.
 
Each lettered label used for gluings in Figure \ref{fig:WMS} is a path $[0, 1]\to S$ with $S$ the connected, orientable, compact surface of genus 3.
When the context is clear, we think of them as elements of $H_1(S, \Sigma)$.
Recall that we write $\dev(M, x) \in \CC$ for the integral of $dz$ along the edge labeled $x$ in $M$.
When the context is clear, we will just write $\vec{x}$.

Recall that $F$ is the subvector-space of real vectors $v$ for which the set of directions $|\Dir(M_{\WMS}, v)|$ is finite. 
Here, $F$ is of dimension 7 and consists of the real vectors $v$ verifying
\[ v(a+b+c+d)=v(e+f+g+h)=v(2a+b-d-e-g-2\alpha-2\beta)=0. \]
These conditions are equivalent to preserving area, i.e. $\Push(M_{\WMS}, tv)$ always has area 1 for any $t$ such that this push is defined.
This is a particularity of the WMS.
Moreover $\dim(\RR-\rel)=3$ and for any $v$ in $F\setminus \RR-\rel$, $\Dir(M_{\WMS}, v)$ spans a subvector-space of dimension 2.
See \cite{my10} or \cite{w15}. 

In the remainder of this section and in Chapter \ref{chap:BS}, we will study the pushes of the WMS locus under the vector $v_0$ defined by
    $v_0(a)=-v_0(c)=1$, $v(b)=v(c)=v(e)=v(f)=v(g)=v(h)=0$ and $v_0(\alpha) = v_0(\beta) = \frac{1}{2}$.

Let us now explicitly describe $\Dir(M_{\WMS}, v_0)$. 
\begin{lem}

$\Dir(M_{\WMS}, v_0)$ has 8 elements:
\begin{itemize}
    \item $v_1$ defined by $v_1(b)=-v_1(d)=1$, $v_1(\alpha) = v_1(\beta) = \frac{1}{2}$ and other labels are mapped to 0
    \item $v_2$ defined by $v_2(e)=-v_2(g)=1$, $-v_2(\alpha) = v_2(\beta) = \frac{1}{2}$ and other labels are mapped to 0
    \item $v_3$ defined by $v_3(f)=-v_3(h)=1$, $-v_3(\alpha) = v_3(\beta) = \frac{1}{2}$ and other labels are mapped to 0
\end{itemize}
as well as $v_0$, $-v_0$, $-v_1$, $-v_2$ and $-v_3$.
\end{lem}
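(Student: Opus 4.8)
\emph{Sketch of the intended proof.} The plan is to make the construction of $\Dir$ completely explicit in this square-tiled situation, reduce it to a finite orbit computation under the pullback action of the affine group of $M_{\WMS}$, and then carry that computation out directly from Figure~\ref{fig:WMS}.

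First I would unwind the definition of $\Dir$ here. Since $M_{\WMS}$ is square-tiled, $\MM^{\lbl}=\MM_{\WMS}^{\lbl}$ is a Teichm\"uller curve, essentially $\SL(2,\RR)/\Gamma$ with $\Gamma$ the label-preserving Veech group of $M_{\WMS}$. Following \cite{w15} (the WMS is an abelian cover of the pillowcase, exactly the class of examples treated there), the monodromy action of $\pi_1(\MM_{\WMS}^{\lbl},M_{\WMS})$ on $T_{M_{\WMS}}\HH^{\lbl}=H^1(M_{\WMS},\Sigma;\CC)$ is the period-coordinate holonomy: it factors through the mapping class group, so it preserves $H^1(M_{\WMS},\Sigma;\RR)$ and acts $\RR$-linearly there, and for a lattice surface it is generated by the pullbacks $\phi^{\ast}$ of the label-preserving affine automorphisms $\phi$ of $M_{\WMS}$ — the translation automorphisms together with representatives of a generating set of $\Gamma$. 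Hence $\Dir(M_{\WMS},v_0)$ is precisely the orbit of the real vector $v_0$ under this group $\mathrm{Mon}\subset\mathrm{GL}(H^1(M_{\WMS},\Sigma;\RR))$, a set of real vectors; and since $v_0\in F$ we already know this orbit is finite, which will be convenient below.

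Next I would list the automorphisms needed and compute their effect on $v_0$. From Figure~\ref{fig:WMS}: the translation automorphism group of $M_{\WMS}$ is abelian of order $8$ and acts by the evident permutations of the eight unit squares, hence of the labelled edges (up to orientation); the rotation $R_\pi$, of derivative $-\mathrm{Id}$, reverses every edge orientation; and $\Gamma$ is generated by the two parabolics associated to the horizontal and vertical cylinder decompositions (plus an elliptic element, if one wishes), each realized by an explicit affine automorphism. A direct edge-chase shows that pulling $v_0$ back by the translation automorphisms produces exactly the three cochains $v_1,v_2,v_3$ displayed in the statement, and that $R_\pi^{\ast}$ carries $\{v_0,v_1,v_2,v_3\}$ onto $\{-v_0,-v_1,-v_2,-v_3\}$. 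Thus $\{\pm v_0,\pm v_1,\pm v_2,\pm v_3\}\subseteq\Dir(M_{\WMS},v_0)$; moreover the translation automorphisms act transitively on $\{v_0,v_1,v_2,v_3\}$ while $R_\pi^{\ast}$ interchanges that set with its negatives, so these eight cochains form a single $\mathrm{Mon}$-orbit as soon as one knows they are stable under the remaining generators.

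The step I expect to be the main obstacle is verifying that last stability for the affine automorphisms with \emph{nontrivial} derivative, namely those realizing the parabolic generators of $\Gamma$: the translation automorphisms merely permute edges, but a parabolic automorphism acts on the relative cohomology $H^1(M_{\WMS},\Sigma;\RR)$ in a less transparent way, its pullback must be computed by hand (equivalently, by tracking how the relevant saddle connections and cylinders are relocated), and one must rule out that it produces a ninth vector. I would organize the endgame as follows: by the remark preceding the statement, $v_0,v_1,v_2,v_3$ span a $2$-dimensional subspace $P\subset H^1(M_{\WMS},\Sigma;\RR)$, which is therefore $\mathrm{Mon}$-invariant and carries a $\mathrm{Mon}$-invariant integral lattice; since $v_0\in F$, $\mathrm{Mon}$ acts on $P$ through a finite subgroup $\bar G\subset\mathrm{GL}(P)$, necessarily a crystallographic point group, and identifying $\bar G$ (it turns out to be the order-$8$ dihedral group, in which $v_0$ has trivial stabilizer) forces the orbit of $v_0$ to be exactly the eight listed cochains. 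The leftover checks that $v_1,v_2,v_3$ are literally the cochains written in the statement are then a routine computation in Figure~\ref{fig:WMS}.
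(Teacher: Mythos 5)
Your outline matches the route the paper itself indicates: the paper's proof of this lemma is simply ``this can be done by direct computation or can also be derived from \cite{w15},'' and your proposal is an organized version of that direct computation --- identify the generators of the monodromy as pullbacks of affine automorphisms, hit $v_0$ with the translation automorphisms and with an automorphism of derivative $-\mathrm{Id}$ to produce the eight vectors, then use the finiteness of the monodromy action on the $2$-dimensional span $P$ together with the invariant integral lattice to rule out a ninth. Two small corrections are worth making. First, the translation automorphism group of the Wollmilchsau is the quaternion group $Q_8$, which is \emph{not} abelian; this nonabelianity is precisely one of the features that makes the Wollmilchsau exceptional. Your argument only uses that the group has order $8$ and permutes the squares, so this does not break the outline, but ``evident permutations'' of an abelian group of order $8$ is the wrong mental picture. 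Second, when you claim the translation automorphisms carry $v_0$ to $v_2$, keep in mind that such a $\phi$ has derivative $\mathrm{Id}$ and therefore preserves edge orientations; since $v_2(\alpha)=-\tfrac12$ while $v_0(\alpha)=v_0(\beta)=\tfrac12$, $\phi$ cannot send $\alpha$ to $\pm\alpha$ or $\pm\beta$. What saves the claim is that $\phi(\alpha)$ may be one of the six unlabeled interior vertical saddle connections, on which $v_0$ can take the value $-\tfrac12$ by the cocycle relation (e.g.\ the interior vertical edge homologous to $\alpha+e-a$ has $v_0$-value $\tfrac12+0-1=-\tfrac12$); spelling this out will make the edge-chase airtight.
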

\begin{proof}
This can be done by direct computation or can also be derived from \cite{w15}.
Note that the $\Push(M, \pm v_i)$ are either all non-defined or differ from each other by label-preserving translation equivalences. This follows from a standard cut-and-paste argument.
\end{proof}

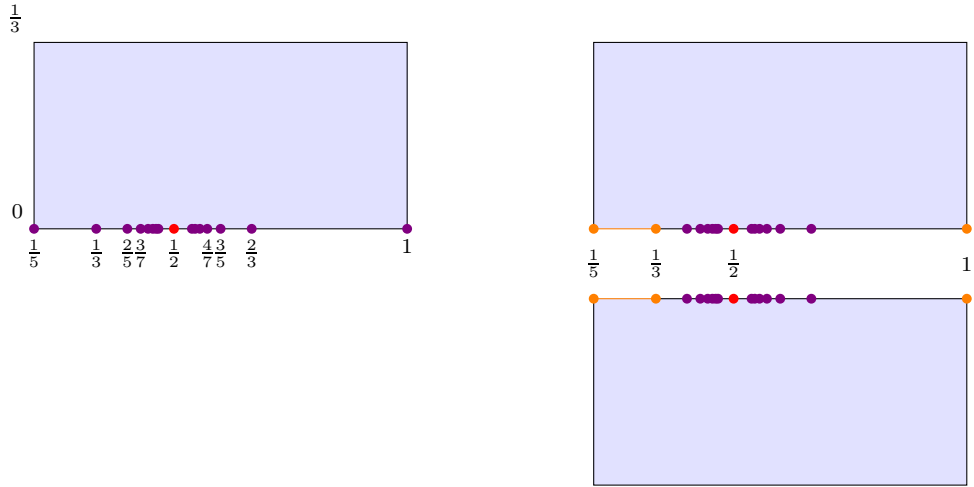
\begin{figure}
\begin{center}
\begin{tikzpicture}[scale=6.2]

    \fill[blue!12!white] (.2,0)--(1,0)--(1,.4)--(.2,.4)--(.2,0);
    \draw[-] (.2,0)--(1,0)--(1,.4)--(.2,.4)--(.2,0);
    \draw (.2,0) node[draw=none,fill=none,font=\scriptsize, above left] {$0$};
    \draw (1,0) node[draw=none,fill=none,font=\scriptsize,below] {1};
    \draw (.2,.4) node[draw=none,fill=none,font=\scriptsize,above left] {$\frac13$};


    \draw (.5,0) node[draw=none,fill=none,font=\scriptsize,below] {$\frac{1}{2}$};
        \fill[red] (.5,0) circle (0.3pt);
    \draw (.2,0) node[draw=none,fill=none,font=\scriptsize,below] {$\frac15$};
        \fill[violet] (1,0) circle (0.3pt);
        \fill[violet] (.2,0) circle (0.3pt);
    \draw (.666667,0) node[draw=none,fill=none,font=\scriptsize,below] {$\frac{2}{3}$};
        \fill[violet] (.6666667,0) circle (0.3pt);
    \draw (.6,0) node[draw=none,fill=none,font=\scriptsize,below] {$\frac{3}{5}$};
        \fill[violet] (.6,0) circle (0.3pt);
    \draw (.571429,0) node[draw=none,fill=none,font=\scriptsize,below] {$\frac{4}{7}$};
        \fill[violet] (.571429,0) circle (0.3pt);
    \draw (.333333,0) node[draw=none,fill=none,font=\scriptsize,below] {$\frac{1}{3}$};
        \fill[violet] (.333333,0) circle (0.3pt);
    \draw (.4,0) node[draw=none,fill=none,font=\scriptsize,below] {$\frac{2}{5}$};
        \fill[violet] (.4,0) circle (0.3pt);
    \draw (.428571,0) node[draw=none,fill=none,font=\scriptsize,below] {$\frac{3}{7}$};
        \fill[violet] (.428571,0) circle (0.3pt);
        \fill[violet] (.444444,0) circle (0.3pt);
        \fill[violet] (.555556,0) circle (0.3pt);
        \fill[violet] (.454545,0) circle (0.3pt);
        \fill[violet] (.545454,0) circle (0.3pt);
        \fill[violet] (.461538,0) circle (0.3pt);
        \fill[violet] (.538462,0) circle (0.3pt);
        \fill[violet] (.466667,0) circle (0.3pt);

\begin{scope} [shift={(1.2,0)}]

    \fill[blue!12!white] (.2,0)--(1,0)--(1,.4)--(.2,.4)--(.2,0);
    \draw[-] (.333333,0)--(1,0)--(1,.4)--(.2,.4)--(.2,0);
    \draw[-, orange] (.333333,0)--(.2,0);

%

    \draw (.5,-.075) node[draw=none,fill=none,font=\scriptsize] {$\frac{1}{2}$};
    \draw (.2,-.075) node[draw=none,fill=none,font=\scriptsize] {$\frac15$};
    \draw (.333333,-.075) node[draw=none,fill=none,font=\scriptsize] {$\frac{1}{3}$};
    \draw (1,-.075) node[draw=none,fill=none,font=\scriptsize] {$1$};
        \fill[red] (.5,0) circle (0.3pt);
        \fill[orange] (1,0) circle (0.3pt);
        \fill[orange] (.2,0) circle (0.3pt);
        \fill[violet] (.6666667,0) circle (0.3pt);
        \fill[violet] (.6,0) circle (0.3pt);
        \fill[violet] (.571429,0) circle (0.3pt);
        \fill[orange] (.333333,0) circle (0.3pt);
        \fill[violet] (.4,0) circle (0.3pt);
        \fill[violet] (.428571,0) circle (0.3pt);
        \fill[violet] (.444444,0) circle (0.3pt);
        \fill[violet] (.555556,0) circle (0.3pt);
        \fill[violet] (.454545,0) circle (0.3pt);
        \fill[violet] (.545454,0) circle (0.3pt);
        \fill[violet] (.461538,0) circle (0.3pt);
        \fill[violet] (.538462,0) circle (0.3pt);
        \fill[violet] (.466667,0) circle (0.3pt);
\end{scope}

\begin{scope} [shift={(1.2,.1)}]
    \fill[blue!12!white] (.2,-.25)--(1,-.25)--(1,-.65)--(.2,-.65)--(.2,-.25);
    \draw[-] (.333333,-.25)--(1,-.25)--(1,-.65)--(.2,-.65)--(.2,-.25);
    \draw[-, orange] (.333333,-.25)--(.2,-.25);
        \fill[red] (.5,-.25) circle (0.3pt);
        \fill[orange] (1,-.25) circle (0.3pt);
        \fill[orange] (.2,-.25) circle (0.3pt);
        \fill[violet] (.6666667,-.25) circle (0.3pt);
        \fill[violet] (.6,-.25) circle (0.3pt);
        \fill[violet] (.571429,-.25) circle (0.3pt);
        \fill[orange] (.333333,-.25) circle (0.3pt);
        \fill[violet] (.4,-.25) circle (0.3pt);
        \fill[violet] (.428571,-.25) circle (0.3pt);
        \fill[violet] (.444444,-.25) circle (0.3pt);
        \fill[violet] (.555556,-.25) circle (0.3pt);
        \fill[violet] (.454545,-.25) circle (0.3pt);
        \fill[violet] (.545454,-.25) circle (0.3pt);
        \fill[violet] (.461538,-.25) circle (0.3pt);
        \fill[violet] (.538462,-.25) circle (0.3pt);
        \fill[violet] (.466667,-.25) circle (0.3pt);
\end{scope}
\end{tikzpicture}
\end{center}
\caption{Left: $R$ as a rectangle and $U$ as a subset of $R$ (in violet). $(\frac12, 0)$ is the only non-isolated element of $U$. Right: the pair $(\widetilde{R}, \widetilde{U})$ as two copies of $(R,U)$ with the points corresponding to $(\frac12, 0)$, $[\frac15, \frac13]\times \{0\}$ and $(1,0)$ identified (in red and orange respectively).}
\label{fig:U}
\end{figure}

We are going to parametrize the part of the WMS locus $\MM_{\WMS}$ that consists of the surfaces $g. M_{\WMS}$ with $g\in \SL(2, \RR)$ a matrix with positive upper-left entries in the following way: for $A>0$ and $B, C \in \RR$, set
\[
    M_{A, B, C} =\left( \begin{array}{cc}
A & B \\
C & \frac{1+BC}{A} \end{array} \right) .M_{\WMS}.
\]
When defined, set also
\[
N_{A, B, C} = \Push(M_{A, B,C},v_0).
\]
The ``infinite catastrophes in finite time'' phenomenon happens around $A=\frac{1}{2}$. $A$ (resp. $C$) represents horizontal scaling (resp. vertical shear) of horizontal edges, while $B$ parametrizes a horizontal shear on vertical edges. $B$ will only play a small role in our arguments. 
%
In order to lighten the notation in Theorem \ref{thm:balls}, let us define $R$ as the rectangle $[\frac15,1]\times [0,\frac13]$.
and $U =\{(\frac15,0)\}\cup \{(\frac12,0)\}\cup\{(\frac{k}{2k\pm 1},0);\, k \in \NN^{>1}\} \subset R$ (see Figure \ref{fig:U} on the left).
We also define $\widetilde{R}$ as the disjoint union of two copies of $R$ with the points corresponding to $(\frac12, 0)$, $[\frac15,\frac13]\times \{0\}$ and $(1,0)$ identified in the canonical way, and similarly for $\widetilde{U}$ (see Figure \ref{fig:U} on the right).

\begin{thm}[Infinite catastrophes in finite time]
\label{thm:balls}
There is a precompact contractible open subset of the locus $\MM_{\WMS}$ whose push along $v_0$ intersects the boundary of the stratum $\HH_{\WMS}$ in infinitely many connected components.
Explicitly, consider $W = \{M_{A, B, C}\,; \, \frac15<A<1,\, -\frac{A^2}3<B<\frac{A^2}3 ,\, -\frac13<C<\frac13\} \subset \HH_{\WMS}$, which is an open neighborhood of the surface $M_{\frac{1}{2},0,0}$ in $\MM_{\WMS}$. 
Let $\overline{\Push} = \overline{\Push(W, v_0)}\subset \overline{\HH_{\WMS}}$ be the closure of the push of $W$ along $v_0$, in the Mirzakhani-Wright partial compactification of $\HH_{\WMS}$. Then $\overline{P}$ is homeomorphic to $\widetilde{R} \times [-1,1]$.
Letting $\Push^{\bound}=\overline{\Push}\cap \left( \overline{\HH_{\WMS}} \setminus  \HH_{\WMS} \right)$, the pair $(\overline{\Push}, \Push^{\bound})$ is homeomorphic to $(\widetilde{R}\times [-1, 1], \widetilde{U}\times [-1, 1])$.
Further, $(\frac12, 0)\in \widetilde{U}$ corresponds to limiting surfaces in $\Push^{\bound}$ where a cylinder degenerates, while every other point in $\widetilde{U}$ corresponds to limiting surfaces where only a saddle connection degenerates (recall definitions from Section \ref{sec:MirWri}).
See Figure \ref{fig:U} for a picture of $\widetilde{R}$ and $\widetilde{U}$.
\end{thm}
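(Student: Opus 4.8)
The plan is to make the map $(A,B,C)\mapsto N_{A,B,C}$ completely explicit in period coordinates, use this to identify both the locus where the push is defined and the shape of $N_{A,B,C}$, and then pass to the Mirzakhani--Wright closure while tracking which limits land in $\overline{\HH_{\WMS}}\setminus\HH_{\WMS}$. First I would realize $M_{\WMS}$ as the eight-square surface of Figure~\ref{fig:WMS}, regard $M_{A,B,C}$ as the corresponding sheared-and-rescaled eight-parallelogram surface, and record $\dev(M_{A,B,C})\in H^{1}(S,\Sigma;\CC)$ as an explicit affine function of $(A,B,C)$; in particular $a$ and $c$ carry the same transformed period and $\alpha,\beta$ share a common period. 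Since $v_0$ is a real cohomology class, integrating the induced vector-field for time~$1$ is, where unobstructed, translation of the period vector by $v_0$, so $\dev(N_{A,B,C})=\dev(M_{A,B,C})+v_0$: concretely $a$ and $c$ receive horizontal shifts $+1$ and $-1$ and $\alpha,\beta$ each a horizontal shift $+\tfrac12$. As $v_0$ is real it fixes the imaginary part of every period, so the segment $t\mapsto\dev(M_{A,B,C})+tv_0$, $t\in[0,1]$, can leave the developable locus only through a horizontal saddle connection shrinking to zero length (cf.\ \cite{mw14}[Theorem~1.2]), a condition I would solve explicitly: when $C=0$ the surface $M_{A,B,0}$ is an upper-triangular image of the square-tiled $M_{\WMS}$, so its horizontal edge $c$, of length $A<1$ on $W$, collapses at time $t=A<1$, and the push is undefined on all of $W\cap\{C=0\}$; when $C\ne0$ the segment avoids the bad locus and $N_{A,B,C}$ is defined. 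Re-cutting the polygon with period data $\dev(M_{A,B,C})+v_0$ into a normal form then exhibits $N_{A,B,C}$, and in particular a distinguished cylinder $\Cyl_A$ whose core direction approaches the horizontal as $A\to\tfrac12$ --- the ``cylinder getting more and more horizontal'' of the introduction, analysed in detail in Chapter~\ref{chap:BS}.

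Next I would verify that $(A,B,C)\mapsto\dev(M_{A,B,C})+v_0$ is injective on $W\setminus\{C=0\}$ (a linear-algebra check), so that $(A,B,C)\mapsto N_{A,B,C}$ embeds $W^{+}:=W\cap\{C>0\}$ and $W^{-}:=W\cap\{C<0\}$ in $\HH_{\WMS}$. Reparametrizing by $A$, by a monotone function of $|C|$ with range $[0,\tfrac13)$, and by a monotone function of $B/A^{2}$ with range $(-1,1)$, each of $\Push(W^{\pm},v_0)$ becomes $(\tfrac15,1)\times[0,\tfrac13)\times(-1,1)$; closing up in the Mirzakhani--Wright compactification adjoins the faces $A\to\tfrac15,1$, $|C|\to\tfrac13$ and $B/A^{2}\to\pm1$ --- giving one copy of $R\times[-1,1]$ for each sign of $C$ --- together with the face $|C|\to0$. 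Hence $\overline{\Push}$ is two copies of $R\times[-1,1]$ glued along the part of $\{|C|=0\}$ on which the one-sided limits $\lim_{C\to0^{+}}N_{A,B,C}$ and $\lim_{C\to0^{-}}N_{A,B,C}$ coincide --- the two sides corresponding to passing the obstruction at $t\approx A$ from above and from below. Computing these one-sided limits from the explicit model, I would show that they agree exactly for $A\in[\tfrac15,\tfrac13]\cup\{\tfrac12\}\cup\{1\}$ and differ otherwise, which is precisely the gluing set defining $\widetilde R$; this yields $\overline{\Push}\cong\widetilde R\times[-1,1]$.

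It then remains to locate $\Push^{\bound}$. Limits over the faces $A\to\tfrac15,1$, $|C|\to\tfrac13$ and $B/A^{2}\to\pm1$ are honest surfaces of $\HH_{\WMS}$, since the periods there stay bounded away from the bad locus, so $\Push^{\bound}$ is contained in the faces $\{|C|=0\}$. There Lemma~\ref{lem:ex1} identifies the limit as the surface obtained by collapsing the cylinder $\Cyl_A$, and its dichotomy says the limit leaves $\HH_{\WMS}$ exactly when the interval-exchange of $\Cyl_A$ joins two singular points; a direct computation from the model shows this occurs precisely for $A\in\{\tfrac15\}\cup\{\tfrac12\}\cup\{\tfrac{k}{2k\pm1}:k\in\NN^{>1}\}$, i.e.\ for the points of $U$, whence $\Push^{\bound}\cong\widetilde U\times[-1,1]$. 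To separate the two behaviours: for $A=\tfrac12$ the cylinder $\Cyl_A$ has become horizontal and the limit is of the type of Lemma~\ref{lem:ex2} --- a saddle connection shrinks with a thin cylinder attached along it --- so a \emph{cylinder} degenerates, whereas for the remaining $A\in U$ the cylinder $\Cyl_A$ is transverse to the horizontal and only a saddle connection shrinks. This last point is where I would invoke the detailed infinite-family analysis of Chapter~\ref{chap:BS} (Theorem~\ref{thm:spiraling}).

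The delicate part is the combinatorial bookkeeping underpinning the first two steps: re-cutting the pushed polygon in each regime of $A$, computing the one-sided limits as $C\to0$, and pinning down the two exceptional sets $[\tfrac15,\tfrac13]\cup\{\tfrac12\}\cup\{1\}$ (where the limits glue) and $\{\tfrac15\}\cup\{\tfrac12\}\cup\{\tfrac{k}{2k\pm1}\}$ (where they degenerate). The accumulation of the points $\tfrac{k}{2k\pm1}$ at $\tfrac12$ --- the source of the ``infinitely many connected components'' --- together with the cylinder-versus-saddle distinction at $\tfrac12$ is the crux of the argument, and is precisely what the analysis of Chapter~\ref{chap:BS} is built to carry out.
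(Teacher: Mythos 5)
Your overall strategy matches the paper's: view the push in period coordinates, appeal to \cite{mw14} to characterize when it is obstructed, re-cut the pushed polygon into normal form, take the one-sided limits as $C\to 0^{\pm}$, and then read off $\widetilde R$ from where the limits glue and $\widetilde U$ from where they leave $\HH_{\WMS}$. The sets you identify (the gluing locus $[\tfrac15,\tfrac13]\cup\{\tfrac12\}\cup\{1\}$ and the degeneration locus $\{\tfrac15\}\cup\{\tfrac12\}\cup\{\tfrac{k}{2k\pm1}\}$), the observation that the obstruction at $C=0$ comes from the edge $c$ of real length $A$, and the distinction between cylinder- and saddle-degeneration at $A=\tfrac12$ versus elsewhere are all correct and match Lemmas~\ref{lem:thm1part1} and~\ref{lem:thm1part2} in the paper.

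The one genuine problem is a logical dependency: you repeatedly defer the heavy combinatorics --- re-cutting the pushed polygon, computing $N^{\pm}_{A,B}$, producing the two exceptional sets, and especially the cylinder-versus-saddle dichotomy at $A=\tfrac12$ --- to Chapter~\ref{chap:BS} and Theorem~\ref{thm:spiraling}, saying that analysis is ``precisely what Chapter~\ref{chap:BS} is built to carry out.'' That is backwards. Theorem~\ref{thm:spiraling} concerns how $\Push(\MM_{\WMS})$ spirals toward a Borel--Serre blow-up at $(\tfrac12,0)$, and its proof explicitly \emph{invokes} Theorem~\ref{thm:balls} (to deduce $(A_x,C_x)\to(\tfrac12,0)$ from $N_{A_x,0,C_x}\to N_{\frac12,0,0}$). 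The re-cutting, the one-sided limits, and the identification of $U$ must be done from scratch inside the proof of Theorem~\ref{thm:balls}, which is what the paper does with Figures~\ref{fig:omega_s_plus_tv}--\ref{fig:omega_t_minus} and Lemmas~\ref{lem:thm1part1}, \ref{lem:thm1part2}; if taken literally your plan is circular. A secondary, minor omission: after constructing $N^{\pm}_{A,B}$ as the limits along $C\to 0^{\pm}$, one must still check that the Mirzakhani--Wright limit is independent of the path by which $(A_n,B_n,C_n)\to(A,B,0)$ with $C_n$ of one sign; the paper closes this gap explicitly as the last step of its proof, and your reparametrization argument should address it rather than assume it.
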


The strategy for proving Theorem \ref{thm:balls} is to describe the translation surfaces $N_{A, B, C}$ with $A$ decreasing from 2 to $\frac15$ and with $B $ and $C$ small.
The proof of Theorem \ref{thm:balls} will take us until the end of Chapter \ref{chap:push} and will be broken down into Lemmas \ref{lem:thm1part1} and \ref{lem:thm1part2}. The former investigates the surfaces $N_{A,B,C}$ when $C\neq 0$ and the latter investigates the limits of $N_{A,B,C}$ as $C\to 0$.

\begin{rem}
\label{rem:thm1}
In order to understand the first obstruction to pushing $M_{A,B,0}$ along $v_0$, note that the push $N_{A,B, 0}$ is easily seen to exist for $A>1$, and as $A$ decreases to $1$ the edge labeled $c$ shrinks to a point, thus $N_{A,B, 0}$ escapes to the boundary of $\HH_{WMS}$ as $A\to 1^+$. See Figure \ref{fig:omega_0}.
\end{rem}

\begin{lem}
\label{lem:thm1part1}
The map $(A,B,C)\mapsto N_{A,B,C}$ is a well-defined homeomorphism from $\{(A,B,C);\, A>0,\, B\in (-\frac{A^2}2,\frac{A^2}2), \, C\in(-\frac12,0)\cup (0,\frac12)\}$ into its image in $\HH_{\WMS}$.
\end{lem}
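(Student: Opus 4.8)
The plan is to reduce the statement to an explicit polygonal model of the family $N_{A,B,C}$ together with explicit formulas for a few of their period coordinates. Once that is in hand, well-definedness of the push is a bookkeeping-heavy but elementary non-degeneracy check, continuity is immediate, and injectivity together with continuity of the inverse follows by writing the inverse map down in period coordinates. Recall that in period coordinates the integral curve of the $v_0$-field through $M_{A,B,C}$ is the segment $t\mapsto \dev(M_{A,B,C})+t v_0$, $t\in[0,1]$, and it lifts to $\HH_{\WMS}$ — i.e.\ the push is defined at time $1$ — exactly as long as no saddle connection degenerates along it.

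First I would set up the model. Since $M_{\WMS}$ is square-tiled, $M_{A,B,C}=g.M_{\WMS}$ with $g=\left(\begin{smallmatrix}A&B\\ C&(1+BC)/A\end{smallmatrix}\right)$ is tiled by the eight parallelograms $g([0,1]^2)$ glued as in Figure~\ref{fig:WMS}, so $\dev(M_{A,B,C},x)=g\cdot\dev(M_{\WMS},x)$ for every edge $x$; in particular the horizontal edges $a,\dots,h$ have holonomy $A+Ci$ and the vertical edges $\alpha,\beta$ have holonomy $B+\tfrac{1+BC}{A}i$. Flowing along $v_0$ for time $t\in[0,1]$ changes only the four periods $\dev(a)=(A+t)+Ci$, $\dev(c)=(A-t)+Ci$, $\dev(\alpha)=\dev(\beta)=(B+t/2)+\tfrac{1+BC}{A}i$. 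From this I would draw, for each $t$, the polygon representing the surface and cut it into triangles along a controlled family of segments; the only subtle point is that this decomposition is not rigid — as $t$ grows, and in particular as $A-t$ changes sign (i.e.\ as $A$ decreases through $1$), one must perform a bounded number of cut-and-paste moves corresponding to a segment sweeping across a singular point, which I would carry out by hand, matching Figures~\ref{fig:omega_0} and~\ref{fig:Nhalf}.

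The core of the argument is then to show the push is defined precisely on the stated region. By the observation above, $N_{A,B,C}$ exists iff no triangle of the decomposition degenerates for any $t\in[0,1]$; I would translate this into a finite list of inequalities in $(A,B,C,t)$ and check that it holds for all $t\in[0,1]$ exactly when $A>0$, $|B|<A^2/2$ and $0<|C|<\tfrac12$, and that the resulting surface stays in $\HH_{\WMS}$. The qualitative phenomenon — the mechanism behind Remark~\ref{rem:thm1} — is that the holonomy $(A-t)+Ci$ of the edge $c$ passes through the value $A-t=0$ without ever vanishing precisely because $C\neq 0$, so the obstruction present at $C=0,\ A=1$ disappears once $C\neq0$; the bounds $|C|<\tfrac12$ and $|B|<A^2/2$ are what keep the remaining pieces of the decomposition from collapsing along the flow. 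As an independent check, once $N_{A,B,C}$ has been exhibited as a non-degenerate surface with no horizontal saddle connection, \cite{mw14}[Theorem~1.2] gives directly that the push is defined. I expect this step — tracking the combinatorial changes of the decomposition as $t$ runs over $[0,1]$ and verifying the accumulated conditions collapse to exactly $A>0$, $|B|<A^2/2$, $0<|C|<\tfrac12$ — to be the main obstacle.

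Continuity of $(A,B,C)\mapsto N_{A,B,C}$ is then immediate: the period coordinates written above are rational functions of $(A,B,C)$ on the given region and $\dev$ is a local homeomorphism on $\HH^{\marked}_{\WMS}$, so composing with the projection to $\HH_{\WMS}$ gives continuity. For injectivity and continuity of the inverse I would recover the parameters from the surface by the explicit formulas $A=\tfrac12\RE\,\dev(N_{A,B,C},a+c)$, $C=\tfrac12\IM\,\dev(N_{A,B,C},a+c)$ and $B=\RE\,\dev(N_{A,B,C},\alpha)-\tfrac12$, which come from $\dev(N_{A,B,C},a+c)=2A+2Ci$ and $\RE\,\dev(N_{A,B,C},\alpha)=B+\tfrac12$ at $t=1$. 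These are well-defined continuous functions of $N_{A,B,C}\in\HH_{\WMS}$ because the surfaces in this family have trivial label-preserving automorphism group: a nontrivial translation automorphism of $M_{\WMS}$ acts freely on the four singular points, hence non-trivially on their labels, so $\Aut^{\lbl}(M_{\WMS})=\{\id\}$; this persists throughout $\MM_{\WMS}$ since $\Aut^{\lbl}$ is $\SL(2,\RR)$-invariant, and the explicit model shows the same for the pushed surfaces $N_{A,B,C}$. Consequently period coordinates restrict to honest local coordinates near these surfaces, the displayed formulas are continuous, and the map is injective and continuous with continuous inverse on its image — that is, a homeomorphism onto its image.
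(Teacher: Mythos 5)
Your overall strategy --- write the push in period coordinates, exhibit an explicit polygonal decomposition, check non-degeneracy of the pieces, and then read off injectivity/continuity from the explicit periods --- is the same as the paper's. The paper works with the quadrilateral decomposition of Figures~\ref{fig:omega_s} and~\ref{fig:omega_s_plus_tv} rather than a triangulation, but that is a cosmetic difference.

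However, there is a conceptual misreading that would send you down the wrong path in the ``main obstacle'' step. You propose to ``translate this into a finite list of inequalities in $(A,B,C,t)$ and check that it holds for all $t\in[0,1]$ \emph{exactly} when $A>0$, $|B|<A^2/2$ and $0<|C|<\tfrac12$.'' That is not what happens, and it is not what the lemma asserts. The paper proves well-definedness of $N_{A,B,C}$ on the \emph{larger} set $\{A>0,\ B>-\tfrac12,\ 0<C<2\}$ (and its mirror for $C<0$): the two light-red quadrilaterals in Figure~\ref{fig:omega_s_plus_tv} are non-degenerate as soon as $C\neq 0$, and the two light-yellow parallelograms (with side $\vec{\alpha}=(B+\tfrac12,\tfrac{1+BC}{A})$ and base $4(A,C)$) are non-degenerate precisely when $\tfrac{1+BC}{A}>0$ and the left-side slope exceeds the base slope, which after simplification gives $B>-\tfrac12$ and $C<2$ --- not $|B|<A^2/2$ or $C<\tfrac12$. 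The inequalities $|B|<A^2/2$, $|C|<\tfrac12$ in the lemma are simply a convenient open subset on which the later analysis (Theorem~\ref{thm:balls}) will be carried out; the lemma only claims the map is a well-defined homeomorphism \emph{from} that set, not that the set is the exact locus of definition. If you try to make the list of non-degeneracy inequalities collapse to $|B|<A^2/2$, $|C|<\tfrac12$, you will not succeed, and you may conclude (incorrectly) that the lemma as stated is wrong or that your decomposition is defective.

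Two smaller remarks. First, when recovering $(A,B,C)$ from $N_{A,B,C}$ via period coordinates of $a+c$ and $\alpha$, be careful that these are classes in $H_1(S,\Sigma)$ attached to a \emph{marked} surface; on $\HH_{\WMS}=\HH^{\lbl}$ there is a priori a mapping-class ambiguity. Triviality of $\Aut^{\lbl}(M_{\WMS})$ does give you that the $\Mod^{\lbl}$-orbit map is injective near these surfaces, so your argument can be made to work, but it needs a sentence pinning down why the marking is recovered from the labeled surface; the paper elides this entirely (``the injectivity claim then directly follows''), so you are actually being more careful than the source. Second, the push is a priori only well-defined on the finite cover $\widehat{\HH}$ (because $|\Dir(M_{\WMS},v_0)|=8$); stating that you are first working there and then projecting to $\HH^{\lbl}$ would tighten the argument.
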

\begin{proof}
We will first assume $C>0$, the case $C<0$ being similar. 
We start by proving that for any $(A, B, C)$ such that $A>0$, $B>-\frac12$ and $0<C<2$, the surface $N_{A,B,C}=\Push(M_{A,B,C} ,v_0)$ is well-defined, by explicitly describing that surface as a union of polygons with gluings. 
The injectivity claim then directly follows.
See Figure \ref{fig:omega_s} for a representation of $M_{A, B, C}$ with $\vec{a} = \ldots = \vec{h} = (A, C)$ and $\vec{\alpha} = \vec{\beta} = (B, \frac{1+BC}{A})$.
See Figures \ref{fig:eta_A_bigger_1}, \ref{fig:eta_A_equal_1} and \ref{fig:omega_s_plus_tv} for representations of $N_{A, B, C}$ with $C>0$ and several values of $A$. 
In Figure \ref{fig:omega_s_plus_tv}, note that $\vec{a}= (A+1, C)$, $\vec{c} = (A-1, C)$ and $\vec{b} = \vec{d} = (A, C)$, so the two light-red quadrilaterals are always well-defined and non-degenerate because $C>0$. 
Moreover, $\vec{\alpha} = \vec{\beta} = (B+\frac{1}{2}, \frac{1+BC}{A})$ and $\vec{a} + \ldots + \vec{d} = \vec{e} + \ldots+ \vec{h} = 4(A, C)$, so the two light-yellow parallelograms are always well-defined and non-degenerate because $0<C<2$ and $B>-\frac{1}{2}$. 
Indeed, these conditions together imply that $\frac{1+BC}{A}>0$ and that the slope of their left side is greater than the slope of their lower side.
This finishes the proof of Lemma \ref{lem:thm1part1}.
\end{proof}

\begin{lem}
\label{lem:thm1part2}
For any $(A,B)$ such that $\frac15<A<1$, the two limiting surfaces $N^+_{A,B} = \lim_{C\to 0^+} N_{A,B,C}$ and $N^-_{A,B} = \lim_{C\to 0^-} N_{A,B,C}$ exist and are as described in Figures \ref{fig:omega_t_final}, \ref{fig:omega_t_final2}, \ref{fig:omega_t_minus} and \ref{fig:Nhalf}. In particular, $N^+_{A,B}=N^-_{A,B}$ exactly when $\frac15\leq A\leq\frac13$, $A=\frac12$ or $A=1$ and $N^\pm_{A,B}$ hits the boundary of the stratum $\HH_{\WMS}$ exactly when $A\in \{\frac15\} \cup \{\frac12\} \cup \{\frac{k}{2k\pm 1};\, k\in \NN^{>1}\}$. 
\end{lem}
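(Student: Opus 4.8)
\section*{Proof proposal for Lemma \ref{lem:thm1part2}}

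The plan is to bootstrap from the explicit polygonal model of $N_{A,B,C}$ built in the proof of Lemma~\ref{lem:thm1part1} and simply let $C$ tend to $0$ inside that model. That model has developing data $\vec a=(A+1,C)$, $\vec b=\vec d=(A,C)$, $\vec c=(A-1,C)$, $\vec e=\dots=\vec h=(A,C)$ and $\vec\alpha=\vec\beta=(B+\frac12,\frac{1+BC}{A})$, with faces the two light-red quadrilaterals and the two light-yellow parallelograms. As $C\to 0$ every horizontal-type edge becomes horizontal, so all four faces flatten onto horizontal strips; but the two parallelograms keep transverse height $\frac1A>0$ (their areas tend to $4\neq 0$), while the two quadrilaterals collapse. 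First I would check that, glued along their shared edges, the two light-red quadrilaterals form a horizontal annulus $\Cyl_C\subset N_{A,B,C}$ --- embedded for $A>1$ and immersed for $\frac15<A<1$ because $\vec c$ then points to the left --- whose core is horizontal in the limit and whose supremum of injectivity radii is $O(C)\to 0$, so that $\Cyl_C$ degenerates in the sense of Section~\ref{sec:MirWri}.

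Next I would invoke the relevant case of Lemma~\ref{lem:ex1}: the circumference $L(A)$ of $\Cyl_C$ and the positions of the marked points on its two end-loops are continuous at $C=0$, so the one-sided Mirzakhani--Wright limit $N^\pm_{A,B}$ exists and is obtained from $N_{A,B,C}$ by deleting $\Cyl_C$ and regluing the two end-loops by the interval exchange induced by horizontal flow across $\Cyl_C$, evaluated at the limiting twist $t^\pm(A)=\lim_{C\to 0^\pm}t(A,C)$. Regluing the remaining two parallelograms (and the now-collapsed slit) and matching the outcome, regime by regime in $A$, with the surfaces of Figures~\ref{fig:omega_t_final}, \ref{fig:omega_t_final2}, \ref{fig:omega_t_minus} and \ref{fig:Nhalf} is then bookkeeping. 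The one place the picture genuinely changes is $A=\frac12$: there the collapsing object is not a thin annulus at bounded distance from the rest of the surface but a cylinder whose circumference $L(A)$ itself tends to $0$ (equivalently $t(A,C)$ runs off to infinity), which is exactly the ``infinite catastrophe'' point; here I would read the limit off directly and note that it is a genuinely degenerate surface with a degenerating cylinder, which gives that clause of the statement.

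With the limiting surfaces in hand, the two remaining assertions become computations with $L(A)$ and $t^\pm(A)$. The equality $N^+_{A,B}=N^-_{A,B}$ holds precisely when $t^+(A)\equiv t^-(A)\ \mathrm{mod}\ L(A)$, together with the two ranges where no annulus degenerates at all: near $A=1$, where instead the edge $c$ of length $|1-A|$ shrinks (cf.\ Remark~\ref{rem:thm1}) and the two one-sided limits trivially coincide, and for $\frac15\le A\le\frac13$, where the collapse is symmetric in $C$ so the limits again agree; I would check these pieces fit together to give exactly $\{\frac15\le A\le\frac13\}\cup\{\frac12\}\cup\{1\}$. Finally, $N^\pm_{A,B}$ lies in $\overline{\HH_{\WMS}}\setminus\HH_{\WMS}$ exactly when the regluing rotation by $t^\pm(A)$ on the length-$L(A)$ circle carries one marked end-loop point to another --- a connection between singular points, which by Lemma~\ref{lem:ex1} forces a saddle connection to shrink --- and I expect the governing quantity to be $\frac{A}{1-2A}$: unwinding $\frac{A}{1-2A}\in\ZZ$ with $|\frac{A}{1-2A}|\ge 2$ yields $\{\frac{k}{2k+1}\}\cup\{\frac{k}{2k-1}\}_{k\ge 2}$, the value $\infty$ (that is, $A=\frac12$) gives the degenerating-cylinder point, and a separate endpoint check at $A=\frac15$, where a length on the parallelogram side vanishes, adds $\frac15$; together this is the stated set.

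The main obstacle is the exact identification of the degenerating annulus $\Cyl_C$ --- which edges of the polygonal model bound it and, above all, the precise formulas for its limiting twist $t^\pm(A)$ and circumference $L(A)$ --- since everything downstream (the $N^+=N^-$ locus and the number-theoretic boundary locus) is a direct consequence of these two functions. The secondary obstacle is the case analysis at the four exceptional values $A\in\{\frac15,\frac13,\frac12,1\}$, each of which falls outside the generic annulus-collapse picture and needs its own short argument.
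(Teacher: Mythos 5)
Your high-level picture --- a thin piece of the polygonal model collapsing as $C\to 0$ and governing the limit --- is right, but the specific mechanism you propose does not go through. The region you call $\Cyl_C$ (the pieces adjacent to $b$ and $c$ in the polygonal model, glued along their shared edges) is not a flat cylinder: its boundary consists of vertical slit-edges rather than closed geodesics, and there is no direction in which it is foliated by closed geodesics, since the adjacent edges $a$, $b$, $c$ have pairwise distinct slopes $C/(A+1)$, $C/A$, $C/(A-1)$. Lemma~\ref{lem:ex1} is a statement about an honest flat cylinder whose complement in $M$ is held fixed while its height shrinks; neither hypothesis holds here, and the ``immersed'' case you flag makes this worse, since an immersed annulus is not a subset $\Cyl_C\subset N_{A,B,C}$ at all. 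In fact, a genuine flat cylinder appears in $N_{A,B,C}$ only at $A=\tfrac12$, where $\vec b+\vec c$ is vertical and the two triangles of Figure~\ref{fig:Nhalf} close up into a cylinder of circumference proportional to $C$; that is the one value at which a \emph{cylinder} degenerates in the sense of Section~\ref{sec:MirWri}.

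At all other $A$, the paper's Mirzakhani--Wright limit is extracted not by deleting a cylinder but by the explicit cut-and-glue of Figures~\ref{fig:omega_s_plus_tv_test}--\ref{fig:convex_quad}: slit above the endpoints of $a$ and $d$ to isolate the two non-convergent pieces (2) and (5), glue them along $b$, slit again below the orange singular point, glue along $c$, and stack the resulting slabs along the new edges $\alpha_i$. After this reorganization every piece is a convex polygon that converges; the surviving edge $\vec{b'}$ has length $(1-A)\bmod(2A-1)$ for $\tfrac12<A<1$ (resp.\ $A\bmod(1-2A)$ for $\tfrac15<A<\tfrac12$), and its vanishing gives the accident set. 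Your arithmetic for that set --- governed by $A/(1-2A)\in\ZZ$, plus the endpoints $\tfrac15$ and $\tfrac12$ --- is correct, but it is a consequence of this cut-and-glue bookkeeping, which is exactly the step you marked ``the main obstacle'' and then left undone. One further slip: ``near $A=1$ \ldots the two one-sided limits trivially coincide'' is off; by the statement of the lemma $N^+_{A,B}=N^-_{A,B}$ only at $A=1$ itself, and for nearby $A$ the limits differ, as a comparison of the labelings in Figures~\ref{fig:omega_t_final} and~\ref{fig:omega_t_minus} shows.
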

\begin{proof}
First assume $A>\frac{1}{2}$ so that $\vec{b}+\vec{c} $ points to the right. In order to obtain a more convenient representation of $N_{A,B,C}$, cut vertical slits above the endpoints of $a$ and $d$ and remember the gluings. See Figure \ref{fig:omega_s_plus_tv_test} for the resulting decomposition of $N_{A, B, C}$ into glued polygons. Our aim is to describe the limiting surface $N^+_{A, B} = \lim_{C\to 0^+} N_{A, B, C}$.
As $C\to 0$, the four convex polygons in Figure \ref{fig:omega_s_plus_tv_test}, marked (1), (3), (4) and (6), converge to polygons for the Hausdorff limit of compact subsets of $\RR^2$. In contrast, the remaining polygons, marked (2) and (5) do not converge to polygons, thus we need to perform additional cutting and gluing in order to understand the limiting surface.
After gluing the polygons (2) and (5) along $b$ (see Figure \ref{fig:omega_s_plus_tv_test_bis}), cutting vertical slits below the orange singular point and gluing the resulting polygons along $c$ (see \ref{fig:convex_quad}), we get a decomposition of $N_{A,B,C}$ that admits a limit (in the sense of Mirzakhani-Wright) as $C\to 0$. See Figure \ref{fig:omega_t_final} for the final representation of $N^+_{A,B}$ as a pair of parallelograms with gluings. Note that the lengths $l$ and $m$ verify $l = \RE(\vec{b}+\vec{c})=2A-1$ and $m = \RE(-\vec{c}) = 1-A$. From the cutting and gluing we performed, we also see that
\[
\vec{b'} = (m \mod l, 0) = (m + \lfloor-\frac{m}{l}\rfloor,0 )
\]
which degenerates exactly when $m/l$ is an integer, i.e. when $A=\frac{k+1}{2k+1}$, $k\in \NN$.

Assume now that $1/5<A<\frac{1}{2}$, as in Figure \ref{fig:omega_s_plus_tv2}. 
We cut along vertical segments above and below the endpoints of $c$ (in red in Figure \ref{fig:omega_s_plus_tv2}). Then the edge labeled $a$ is split in 3 smaller edges $a_i$ with $\vec{a_1} = \vec{a_3} = (1-2A, C)$ and $\vec{a_2}=(5A-1,C)$, and we are left with 8 polygonal pieces. The pieces delimited by edges $a_2$ or $d$ have limits as $C\to 0$. After gluing the remaining ones along $a_1$, $a_3$ and $c$, we are left with a figure similar to Figure \ref{fig:convex_quad}. 
In a similar way to the case $A>\frac{1}{2}$, we get the resulting representation of $N^+_{A,B}$ in Figure \ref{fig:omega_t_final2}, with $c'$ coming from the $a_2$ label and $a'$, $b'$ coming from a standard cut and paste. More precisely, $\vec{c'}= (5A-1,0)$ and $\vec{b'} = (A \mod 1-2A,0)$.
$b'$ is degenerated exactly when $A = \frac{k}{2k+1}$, $k>1$ integer.

The case where $C<0$ is similar, and the resulting surfaces $N^-_{A,B}$ are drawn in Figure \ref{fig:omega_t_minus}. Note that for $\frac15<A<1$, $N^+_{A,B}=N^-_{A,B}$ exactly when $A=\frac12$ or $\frac15<A<\frac13$.
\end{proof}

\begin{proof}[Proof of Theorem \ref{thm:balls}]
Note that the limits $N^\pm_{A,B}$ obtained in Lemma \ref{lem:thm1part2} are the result of approaching $(A,B,0)$ radially with an angle $\pm \frac{\pi}2$.
In order to complete the proof of Theorem \ref{thm:balls}, we only are left to show that these limits do not depend on the way we approach $(A,B,0)$.
More precisely, given any sequence $(A_n, B_n, C_n)$ with $A_n\to A\in [\frac15,1]$, $B_n\to B$, $C_n> 0$ (resp. $C_n<0$) and $C_n\to 0$, then the Mirzakhani-Wright limit of $N_{A_n, B_n, C_n}$ exists and is equal to $N^+_{A,B}$ (resp. $N^-_{A,B}$).
But this results from small changes in the proof of Lemma \ref{lem:thm1part2}, therefore the proof of Theorem \ref{thm:balls} is complete.
\end{proof}

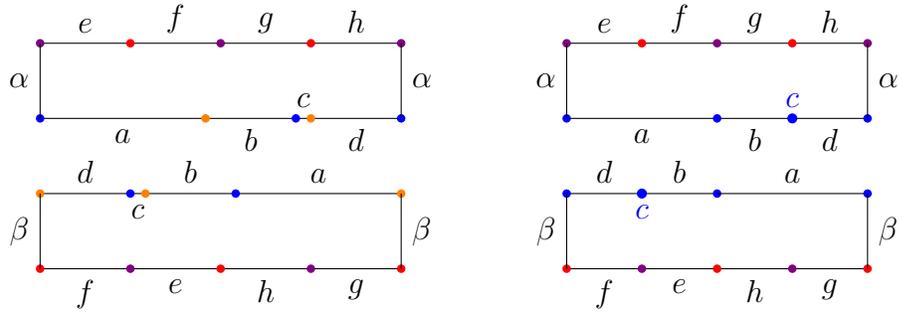
\begin{figure}
\begin{center}
\begin{tikzpicture}[scale=1]
\draw[-] (0,0)--(2.2,0) node[midway, below] {$a$};
    \fill[blue] (0,0) circle (1.6pt);
\draw[-] (2.2,0)--(3.4,0) node[midway, below] {$b$};
    \fill[orange] (2.2,0) circle (1.6pt);
\draw[-] (3.4,0)--(3.6,0) node[midway, above] {$c$};
    \fill[blue] (3.4,0) circle (1.6pt);
\draw[-] (3.6,0)--(4.8,0) node[midway, below] {$d$};
    \fill[orange] (3.6,0) circle (1.6pt);
    \fill[blue] (4.8,0) circle (1.6pt);

\draw[-] (0,1)--(1.2,1) node[midway, above] {$e$};
    \fill[violet] (0,1) circle (1.6pt);
\draw[-] (1.2,1)--(2.4,1) node[midway, above] {$f$};
    \fill[red] (1.2,1) circle (1.6pt);
\draw[-] (2.4,1)--(3.6,1) node[midway, above] {$g$};
    \fill[violet] (2.4,1) circle (1.6pt);
\draw[-] (3.6,1)--(4.8,1) node[midway, above] {$h$};
    \fill[red] (3.6,1) circle (1.6pt);
    \fill[violet] (4.8,1) circle (1.6pt);

\draw[-] (0,0)--(0,1) node[midway, left] {$\alpha$};
\draw[-] (4.8,0)--(4.8,1) node[midway, right] {$\alpha$};

\draw[-] (0,-2)--(1.2,-2) node[midway, below] {$f$};
    \fill[red] (0,-2) circle (1.6pt);
\draw[-] (1.2,-2)--(2.4,-2) node[midway, below] {$e$};
    \fill[violet] (1.2,-2) circle (1.6pt);
\draw[-] (2.4,-2)--(3.6,-2) node[midway, below] {$h$};
    \fill[red] (2.4,-2) circle (1.6pt);
\draw[-] (3.6,-2)--(4.8,-2) node[midway, below] {$g$};
    \fill[violet] (3.6,-2) circle (1.6pt);
    \fill[red] (4.8,-2) circle (1.6pt);

\draw[-] (0,-1)--(1.2,-1) node[midway, above] {$d$};
    \fill[orange] (0,-1) circle (1.6pt);
\draw[-] (1.2,-1)--(1.4,-1) node[midway, below] {$c$};
    \fill[blue] (1.2,-1) circle (1.6pt);
\draw[-] (1.4,-1)--(2.6,-1) node[midway, above] {$b$};
    \fill[orange] (1.4,-1) circle (1.6pt);
\draw[-] (2.6,-1)--(4.8,-1) node[midway, above] {$a$};
    \fill[blue] (2.6,-1) circle (1.6pt);
    \fill[orange] (4.8,-1) circle (1.6pt);

\draw[-] (0,-2)--(0,-1) node[midway, left] {$\beta$};
\draw[-] (4.8,-2)--(4.8,-1) node[midway, right] {$\beta$};

\draw[-] (7,0)--(9,0) node[midway, below] {$a$};
    \fill[blue] (7,0) circle (1.6pt);
\draw[-] (9,0)--(10,0) node[midway, below] {$b$};
    \fill[blue] (9,0) circle (1.6pt);
\draw[-] (10,0)--(11,0) node[midway, below] {$d$};
    \fill[blue] (10,0) circle (1.9pt) node[above, blue] {$c$};
    \fill[blue] (11,0) circle (1.6pt);

\draw[-] (7,1)--(8,1) node[midway, above] {$e$};
    \fill[violet] (7,1) circle (1.6pt);
\draw[-] (8,1)--(9,1) node[midway, above] {$f$};
    \fill[red] (8,1) circle (1.6pt);
\draw[-] (9,1)--(10,1) node[midway, above] {$g$};
    \fill[violet] (9,1) circle (1.6pt);
\draw[-] (10,1)--(11,1) node[midway, above] {$h$};
    \fill[red] (10,1) circle (1.6pt);
    \fill[violet] (11,1) circle (1.6pt);

\draw[-] (7,0)--(7,1) node[midway, left] {$\alpha$};
\draw[-] (11,0)--(11,1) node[midway, right] {$\alpha$};

\draw[-] (7,-2)--(8,-2) node[midway, below] {$f$};
    \fill[red] (7,-2) circle (1.6pt);
\draw[-] (8,-2)--(9,-2) node[midway, below] {$e$};
    \fill[violet] (8,-2) circle (1.6pt);
\draw[-] (9,-2)--(10,-2) node[midway, below] {$h$};
    \fill[red] (9,-2) circle (1.6pt);
\draw[-] (10,-2)--(11,-2) node[midway, below] {$g$};
    \fill[violet] (10,-2) circle (1.6pt);
    \fill[red] (11,-2) circle (1.6pt);

\draw[-] (7,-1)--(8,-1) node[midway, above] {$d$};
    \fill[blue] (7,-1) circle (1.6pt);
\draw[-] (8,-1)--(9,-1) node[midway, above] {$b$};
    \fill[blue] (9,-1) circle (1.6pt);
\draw[-] (9,-1)--(11,-1) node[midway, above] {$a$};
    \fill[blue] (8,-1) circle (1.9pt) node[below, blue] {$c$};
    \fill[blue] (11,-1) circle (1.6pt);

\draw[-] (7,-2)--(7,-1) node[midway, left] {$\beta$};
\draw[-] (11,-2)--(11,-1) node[midway, right] {$\beta$};

\end{tikzpicture}
\end{center}
\caption{Left: the surface $N_{1+\epsilon, 0, 0}$ with $\epsilon >0$ (actually here $\epsilon = 0.2$), with $\vec{a} = (2+\epsilon,0)$, $\vec{c} = (\epsilon,0)$, $\vec{\alpha} = \vec{\beta} = (\frac12, \frac1{1+\epsilon})$ and $\vec{b}=\vec{d}=\vec{e} = \ldots = \vec{h}=(1+\epsilon,0)$. Note that as $\epsilon \to 0$, the length of the segment $c$ tends to 0, so that $\lim_{\epsilon \to 0} N_{1+\epsilon,0,0} = \Push (M_{\WMS}, v_0)$ is not well-defined as a surface in $\HH_{\WMS}$. Right: the result of collapsing $c$ to a point is shown in blue. The Mirzakhani-Wright limit $N_{1,0,0}$ exists and lies in $\HH_1(1,1,2)$.}
\label{fig:omega_0}
\end{figure}


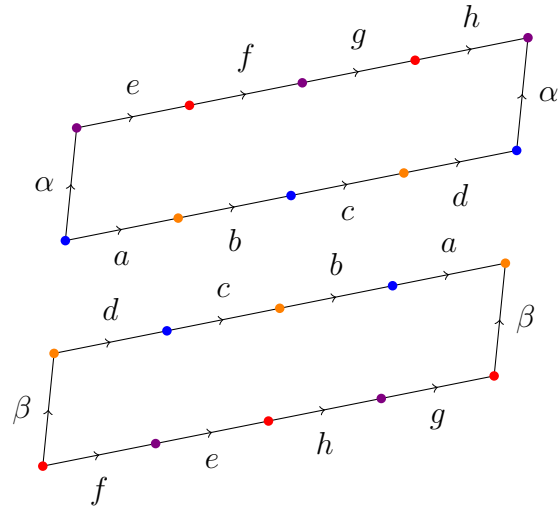
\begin{figure}
\begin{center}
\begin{tikzpicture}[scale=1.5]

\foreach \i in {0,...,3}{%
    \foreach \j in {0,...,3}{%
        \draw[middlearrow={>}] (\i+.1*\j,\j+.2*\i)--(\i+1+.1*\j,\j+.2*\i+.2) ;}}
\foreach \j in {0,2}{%
    \foreach \i in {0,4}{%
        \draw[middlearrow={>}] (\i+.1*\j,\j+0.2*\i)--(\i+.1*\j+.1,\j+1+.2*\i) ;}}

\node[left] at (0.2,2.5) {$\alpha$};
\node[right] at (4.3,3.3) {$\alpha$};
\node[left] at (0,0.5) {$\beta$};
\node[right] at (4.1,1.3) {$\beta$};

\node[above] at (0.8,3.2) {$e$};
    \fill[violet] (.3,3) circle (1.2pt);
\node[above] at (1.8,3.4) {$f$};
    \fill[red] (1.3,3.2) circle (1.2pt);
\node[above] at (2.8,3.6) {$g$};
    \fill[violet] (2.3,3.4) circle (1.2pt);
\node[above] at (3.8,3.8) {$h$};
    \fill[red] (3.3,3.6) circle (1.2pt);
    \fill[violet] (4.3,3.8) circle (1.2pt);

\node[below] at (0.7,2) {$a$};
    \fill[blue] (.2,2) circle (1.2pt);
\node[below] at (1.7,2.2) {$b$};
    \fill[orange] (1.2,2.2) circle (1.2pt);
\node[below] at (2.7,2.4) {$c$};
    \fill[blue] (2.2,2.4) circle (1.2pt);
\node[below] at (3.7,2.6) {$d$};
    \fill[orange] (3.2,2.6) circle (1.2pt);
    \fill[blue] (4.2,2.8) circle (1.2pt);

\node[above] at (0.6,1.2) {$d$};
    \fill[orange] (.1,1) circle (1.2pt);
\node[above] at (1.6,1.4) {$c$};
    \fill[blue] (1.1,1.2) circle (1.2pt);
\node[above] at (2.6,1.6) {$b$};
    \fill[orange] (2.1,1.4) circle (1.2pt);
\node[above] at (3.6,1.8) {$a$};
    \fill[blue] (3.1,1.6) circle (1.2pt);
    \fill[orange] (4.1,1.8) circle (1.2pt);

\node[below] at (0.5,0) {$f$};
    \fill[red] (0,0) circle (1.2pt);
\node[below] at (1.5,0.2) {$e$};
    \fill[violet] (1,.2) circle (1.2pt);
\node[below] at (2.5,0.4) {$h$};
    \fill[red] (2,.4) circle (1.2pt);
\node[below] at (3.5,0.6) {$g$};
    \fill[violet] (3,.6) circle (1.2pt);
    \fill[red] (4,.8) circle (1.2pt);

\end{tikzpicture}
\end{center}
\caption{The surface $M_{A, B, C}$. Here $A=1.2$ and $B=C=0.1$.}
\label{fig:omega_s}
\end{figure}


\begin{figure}
 \begin{center}
\begin{tikzpicture}[scale= 2]
\draw[-] (0,1)--(1.8,1.1)--(2.8,1.2)--(3,1.3)--(4,1.4)--(4.6,2.4)--(0.6,2)--(0,1);
  \fill[blue] (0,1) circle (0.9pt);
  \fill[orange] (1.8,1.1) circle (0.9pt);
  \fill[blue] (2.8,1.2) circle (0.9pt);
  \fill[orange] (3,1.3) circle (0.9pt);
  \fill[blue] (4,1.4) circle (0.9pt);

\node at (.9,1) {$a$};
\node at (2.5,1.25) {$b$};
\node at (2.9,1.35) {$c$};
\node at (3.5,1.45) {$d$};

\draw[-] (0,0)--(1,0.1)--(1.2,0.2)--(2.2,0.3)--(4,0.4)--(3.4,-0.6)--(-0.6,-1)--(0,0);
  \fill[orange] (0,0) circle (0.9pt);
  \fill[blue] (1,0.1) circle (0.9pt);
  \fill[orange] (1.2,0.2) circle (0.9pt);
  \fill[blue] (2.2,0.3) circle (0.9pt);
  \fill[orange] (4,0.4) circle (0.9pt);

\node at (0.5,0) {$d$};
\node at (1.1,0.1) {$c$};
\node at (1.7,0.2) {$b$};
\node at (2.9,0.3) {$a$};

\node at (1.1,2.1) {$e$};
\node at (2.1,2.2) {$f$};
\node at (3.1,2.3) {$g$};
\node at (4.1,2.4) {$h$};

\node at (-.1,-1) {$f$};
\node at (.9,-.9) {$e$};
\node at (1.9,-.8) {$h$};
\node at (2.9,-.7) {$g$};


  \fill[violet] (.6,2) circle (0.9pt);
  \fill[red] (1.6,2.1) circle (0.9pt);
  \fill[violet] (2.6,2.2) circle (0.9pt);
  \fill[red] (3.6,2.3) circle (0.9pt);
  \fill[violet] (4.6,2.4) circle (0.9pt);

  \fill[red] (-.6,-1) circle (0.9pt);
  \fill[violet] (.4,-.9) circle (0.9pt);
  \fill[red] (1.4,-.8) circle (0.9pt);
  \fill[violet] (2.4,-.7) circle (0.9pt);
  \fill[red] (3.4,-.6) circle (0.9pt);

\node at (0.2,1.5) {$\alpha$};
\node at (4.4,1.9) {$\alpha$};
\node at (-0.4,-0.5) {$\beta$};
\node at (3.8,-.1) {$\beta$};

\end{tikzpicture}
\end{center}
\caption{The surface $N_{A, B, C}$ with $A>1$. Here $A=1.2$ and $B=C=0.1$.}
\label{fig:eta_A_bigger_1}
\end{figure}
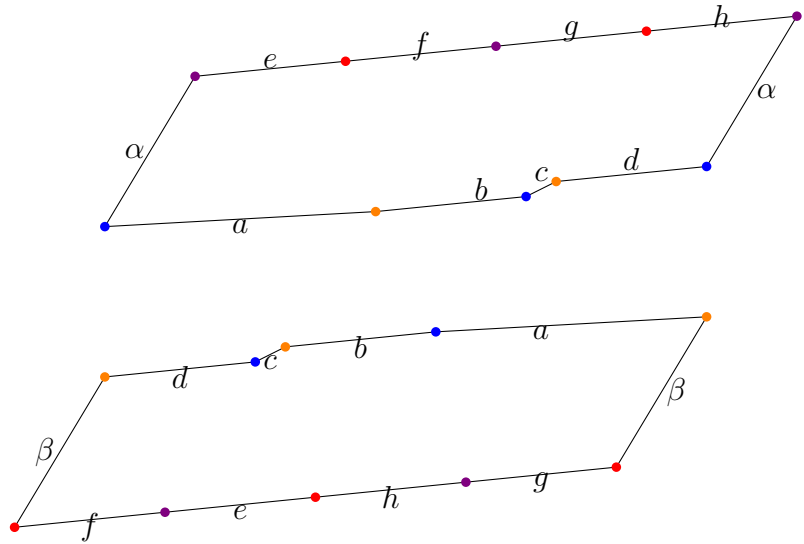


\begin{figure}
 \begin{center}
\begin{tikzpicture}[scale= 2]
\draw[-] (0,1)--(2,1.1)--(3,1.2)--(3,1.3)--(4,1.4)--(4.6,2.4)--(0.6,2)--(0,1);
  \fill[blue] (0,1) circle (0.9pt);
  \fill[orange] (2,1.1) circle (0.9pt);
  \fill[blue] (3,1.2) circle (0.9pt);
  \fill[orange] (3,1.3) circle (0.9pt);
  \fill[blue] (4,1.4) circle (0.9pt);

\node at (1,1) {$a$};
\node at (2.5,1.25) {$b$};
\node at (3.07,1.25) {$c$};
\node at (3.5,1.45) {$d$};

\draw[-] (0,0)--(1,0.1)--(1,0.2)--(2,0.3)--(4,0.4)--(3.4,-0.6)--(-0.6,-1)--(0,0);
  \fill[orange] (0,0) circle (0.9pt);
  \fill[blue] (1,0.1) circle (0.9pt);
  \fill[orange] (1,0.2) circle (0.9pt);
  \fill[blue] (2,0.3) circle (0.9pt);
  \fill[orange] (4,0.4) circle (0.9pt);

\node at (0.5,0) {$d$};
\node at (1.07,0.15) {$c$};
\node at (1.5,0.2) {$b$};
\node at (3,0.3) {$a$};

\node at (1.1,2.1) {$e$};
\node at (2.1,2.2) {$f$};
\node at (3.1,2.3) {$g$};
\node at (4.1,2.4) {$h$};

\node at (-.1,-1) {$f$};
\node at (.9,-.9) {$e$};
\node at (1.9,-.8) {$h$};
\node at (2.9,-.7) {$g$};


  \fill[violet] (.6,2) circle (0.9pt);
  \fill[red] (1.6,2.1) circle (0.9pt);
  \fill[violet] (2.6,2.2) circle (0.9pt);
  \fill[red] (3.6,2.3) circle (0.9pt);
  \fill[violet] (4.6,2.4) circle (0.9pt);

  \fill[red] (-.6,-1) circle (0.9pt);
  \fill[violet] (.4,-.9) circle (0.9pt);
  \fill[red] (1.4,-.8) circle (0.9pt);
  \fill[violet] (2.4,-.7) circle (0.9pt);
  \fill[red] (3.4,-.6) circle (0.9pt);

\node at (0.2,1.5) {$\alpha$};
\node at (4.4,1.9) {$\alpha$};
\node at (-0.4,-0.5) {$\beta$};
\node at (3.8,-.1) {$\beta$};

\end{tikzpicture}
\end{center}
\caption{The surface $N_{1, B, C}$ is well-defined when $C>0$, thus we can ``push past that point'', i.e. $N_{A, B, C}$ is well-defined for any $A>0$. Here $A=1$, see also Figure \ref{fig:omega_s_plus_tv} for the case $A<1$. Compare with Figure \ref{fig:omega_0} (case $C=0$) where the edge labeled $c$ degenerates as $A$ approaches 1.}
\label{fig:eta_A_equal_1}
\end{figure}
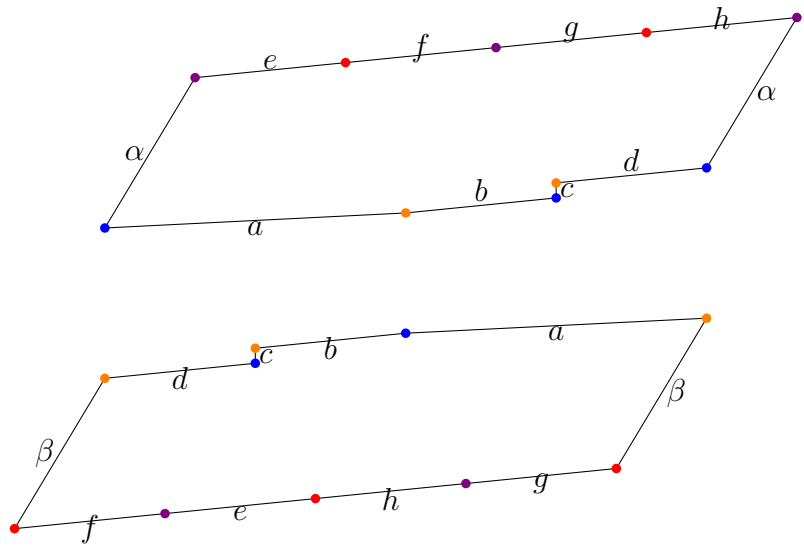


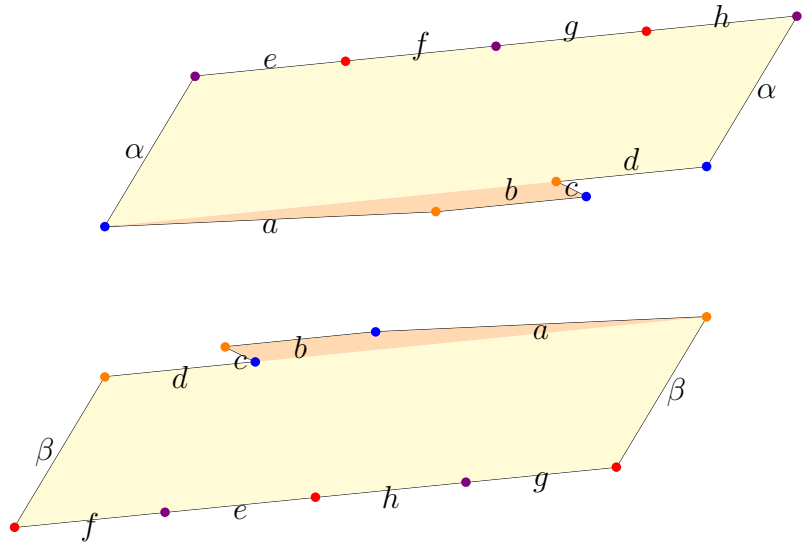
\begin{figure}
 \begin{center}
\begin{tikzpicture}[scale= 2]
\draw[-] (0,1)--(2.2,1.1)--(3.2,1.2)--(3,1.3)--(4,1.4)--(4.6,2.4)--(0.6,2)--(0,1);
\fill[yellow!20!white] (0,1)--(4,1.4)--(4.6,2.4)--(0.6,2)--(0,1);
\fill[orange!30!white] (0,1)--(2.2,1.1)--(3.2,1.2)--(3,1.3)--(0,1);
  \fill[blue] (0,1) circle (0.9pt);
  \fill[orange] (2.2,1.1) circle (0.9pt);
  \fill[blue] (3.2,1.2) circle (0.9pt);
  \fill[orange] (3,1.3) circle (0.9pt);
  \fill[blue] (4,1.4) circle (0.9pt);

\node at (1.1,1) {$a$};
\node at (2.7,1.25) {$b$};
\node at (3.1,1.25) {$c$};
\node at (3.5,1.45) {$d$};

\draw[-] (0,0)--(1,0.1)--(0.8,0.2)--(1.8,0.3)--(4,0.4)--(3.4,-0.6)--(-0.6,-1)--(0,0);
\fill[yellow!20!white] (0,0)--(4,0.4)--(3.4,-.6)--(-0.6,-1)--(0,0);
\fill[orange!30!white] (1,0.1)--(0.8,0.2)--(1.8,0.3)--(4,0.4)--(1,.1);
  \fill[orange] (0,0) circle (0.9pt);
  \fill[blue] (1,0.1) circle (0.9pt);
  \fill[orange] (0.8,0.2) circle (0.9pt);
  \fill[blue] (1.8,0.3) circle (0.9pt);
  \fill[orange] (4,0.4) circle (0.9pt);

\node at (0.5,0) {$d$};
\node at (0.9,0.1) {$c$};
\node at (1.3,0.2) {$b$};
\node at (2.9,0.3) {$a$};

\node at (1.1,2.1) {$e$};
\node at (2.1,2.2) {$f$};
\node at (3.1,2.3) {$g$};
\node at (4.1,2.4) {$h$};

\node at (-.1,-1) {$f$};
\node at (.9,-.9) {$e$};
\node at (1.9,-.8) {$h$};
\node at (2.9,-.7) {$g$};


  \fill[violet] (.6,2) circle (0.9pt);
  \fill[red] (1.6,2.1) circle (0.9pt);
  \fill[violet] (2.6,2.2) circle (0.9pt);
  \fill[red] (3.6,2.3) circle (0.9pt);
  \fill[violet] (4.6,2.4) circle (0.9pt);

  \fill[red] (-.6,-1) circle (0.9pt);
  \fill[violet] (.4,-.9) circle (0.9pt);
  \fill[red] (1.4,-.8) circle (0.9pt);
  \fill[violet] (2.4,-.7) circle (0.9pt);
  \fill[red] (3.4,-.6) circle (0.9pt);

\node at (0.2,1.5) {$\alpha$};
\node at (4.4,1.9) {$\alpha$};
\node at (-0.4,-0.5) {$\beta$};
\node at (3.8,-.1) {$\beta$};

\end{tikzpicture}
\end{center}
\caption{The surface $N_{A, B, C}$ with $C>0$ and $A<1$. Recall that $\vec{a}= (A+1, C)$, $\vec{c} = (A-1, C)$, $\vec{\alpha} = \vec{\beta} = (B+\frac{1}{2}, \frac{1+BC}{A})$ and $\vec{b} = \vec{d} = \vec{e} = \ldots = \vec{h} = (A,C)$. Computations in Theorem \ref{lem:thm1part1} show that this representation makes sense for any $A>0$ as long as $B>-\frac12$ and $0<C<2$.}
\label{fig:omega_s_plus_tv}
\end{figure}


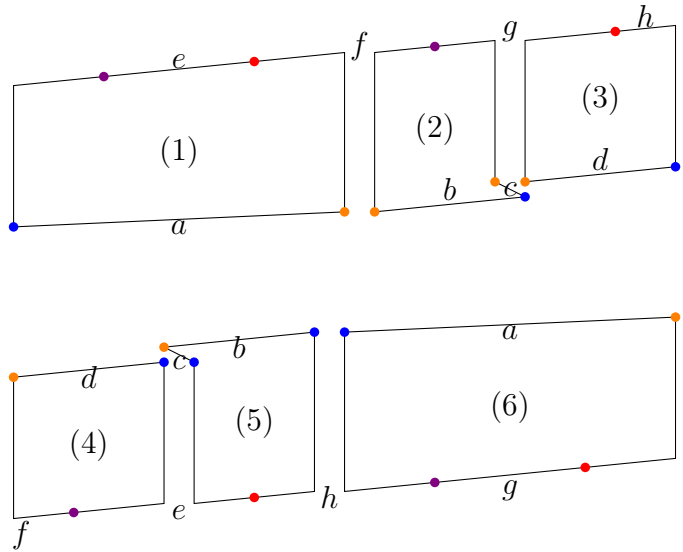
\begin{figure} 
 \begin{center} 
\begin{tikzpicture}[scale= 2] 
\draw[-] (-0.2,1)--(2,1.1)--(2,2.16)--(-0.2,1.94)--(-0.2,1); 
    \node at (.9,1.5) {(1)};
\draw[-] (2.2,1.1)--(3.2,1.2)--(3,1.3)--(3,2.24)--(2.2,2.16)--(2.2,1.1); 
    \node at (2.6,1.65) {(2)};
\draw[-] (3.2,1.3)--(4.2,1.4)--(4.2,2.34)--(3.2,2.24)--(3.2,1.3); 
    \node at (3.7,1.85) {(3)};
  \fill[blue] (-.2,1) circle (0.9pt); 
  \fill[orange] (2,1.1) circle (0.9pt); 
  \fill[orange] (2.2,1.1) circle (0.9pt); 
  \fill[blue] (3.2,1.2) circle (0.9pt); 
  \fill[orange] (3,1.3) circle (0.9pt); 
  \fill[orange] (3.2,1.3) circle (0.9pt); 
  \fill[blue] (4.2,1.4) circle (0.9pt); 
  
  \fill[violet] (.4,2) circle (0.9pt); 
  \fill[red] (1.4,2.1) circle (0.9pt); 
  \fill[violet] (2.6,2.2) circle (0.9pt); 
  \fill[red] (3.8,2.3) circle (0.9pt); 
 
\node at (.9,1) {$a$}; 
\node at (2.7,1.25) {$b$}; 
\node at (3.1,1.25) {$c$}; 
\node at (3.7,1.45) {$d$}; 
 
\draw[-] (-.2,0)--(.8,0.1)--(.8,-.84)--(-.2,-.94)--(-.2,0); 
    \node at (.3,-.45) {(4)};
\draw[-] (1,0.1)--(0.8,0.2)--(1.8,0.3)--(1.8,-.76)--(1,-.84)--(1,0.1); 
    \node at (1.4,-.3) {(5)};
\draw[-] (2,0.3)--(4.2,0.4)--(4.2,-.54)--(2,-.76)--(2,0.3); 
    \node at (3.1,-.2) {(6)};
  \fill[orange] (-.2,0) circle (0.9pt); 
  \fill[blue] (.8,0.1) circle (0.9pt); 
  \fill[blue] (1,0.1) circle (0.9pt); 
  \fill[orange] (0.8,0.2) circle (0.9pt); 
  \fill[blue] (1.8,0.3) circle (0.9pt); 
  \fill[blue] (2,0.3) circle (0.9pt); 
  \fill[orange] (4.2,0.4) circle (0.9pt); 

  \fill[violet] (.2,-.9) circle (0.9pt); 
  \fill[red] (1.4,-.8) circle (0.9pt); 
  \fill[violet] (2.6,-.7) circle (0.9pt); 
  \fill[red] (3.6,-.6) circle (0.9pt); 

\node at (0.3,0) {$d$};
\node at (0.9,0.1) {$c$};
\node at (1.3,0.2) {$b$};
\node at (3.1,0.3) {$a$};

\node at (.9,2.1) {$e$};
\node at (2.1,2.2) {$f$};
\node at (3.1,2.3) {$g$};
\node at (4,2.4) {$h$};

\node at (-.15,-1.05) {$f$};
\node at (.9,-.9) {$e$};
\node at (1.9,-.8) {$h$};
\node at (3.1,-.75) {$g$};

\end{tikzpicture}
\end{center}
\caption{Another representation of $N_{A, B, C}$ for $\frac{1}{2}<A<1$, $B>-\frac{1}{2}$ and $0<C<2$. It was obtained from Figure \ref{fig:omega_s_plus_tv} by cutting vertical slits above the endpoints of $a$ and $d$.}
\label{fig:omega_s_plus_tv_test}
\end{figure}


\begin{figure} 
 \begin{center} 
\begin{tikzpicture}[scale= 2] 
\draw[-] (-0.2,1)--(2,1.1)--(2,2.16)--(-0.2,1.94)--(-0.2,1); 
    \node at (.9,1.5) {(1)};
\draw[-] (3.2,.75)--(3,.85)--(3,1.79)--(2.2,1.71)--(2.2,.65); 
    \node at (2.6,1.25) {(2)};
\draw[-] (3.9,1.3)--(4.9,1.4)--(4.9,2.34)--(3.9,2.24)--(3.9,1.3); 
    \node at (4.4,1.75) {(3)};

\node at (.9,1) {$a$}; 
\node at (2.7,.8) {$b$}; 
\node at (3.13,.85) {$c$}; 
\node at (4.4,1.45) {$d$}; 
 
\draw[-] (.3,0)--(1.3,0.1)--(1.3,-.84)--(.3,-.94)--(.3,0); 
    \node at (.8,-.45) {(4)};
\draw[-] (2.4,0.55)--(2.2,0.65)--(3.2,0.75)--(3.2,-.31)--(2.4,-.39)--(2.4,0.55); 
    \node at (2.8,.2) {(5)};
\draw[-] (3.4,0.3)--(5.6,0.4)--(5.6,-.54)--(3.4,-.76)--(3.4,0.3); 
    \node at (4.5,-.2) {(6)};
  \fill[blue] (-.2,1) circle (0.8pt); 
  \fill[orange] (2,1.1) circle (0.8pt); 
  \fill[orange] (2.2,.65) circle (0.8pt); 
  \fill[blue] (3.2,.75) circle (0.8pt); 
  \fill[orange] (3,.85) circle (0.8pt); 
  \fill[orange] (3.9,1.3) circle (0.8pt); 
  \fill[blue] (4.9,1.4) circle (0.8pt); 

  \fill[violet] (.4,2) circle (0.8pt); 
  \fill[red] (1.4,2.1) circle (0.8pt); 
  \fill[violet] (2.6,1.75) circle (0.8pt); 
  \fill[red] (4.5,2.3) circle (0.8pt); 
 
  \fill[orange] (.3,0) circle (0.8pt); 
  \fill[blue] (1.3,0.1) circle (0.8pt); 
  \fill[blue] (2.4,0.55) circle (0.8pt); 
  \fill[blue] (3.4,0.3) circle (0.8pt); 
  \fill[orange] (5.6,0.4) circle (0.8pt); 

  \fill[violet] (.9,-.9) circle (0.8pt); 
  \fill[red] (2.8,-.35) circle (0.8pt); 
  \fill[violet] (4,-.7) circle (0.8pt); 
  \fill[red] (5,-.6) circle (0.8pt); 

\node at (0.8,.13) {$d$};
\node at (2.27,0.51) {$c$};
\node at (4.5,0.4) {$a$};



\end{tikzpicture}
\end{center}
\caption{Representation of $N_{A, B, C}$ for $\frac{1}{2}<A<1$, $B>-\frac{1}{2}$ and $0<C<2$, obtained from Figure \ref{fig:omega_s_plus_tv_test} by gluing the center pieces along $b$. Labels $e$ through $h$ were omitted. Note that as $A \to \frac{1}{2}$, $|b|/|c| \to 1$. See left of Figure \ref{fig:convex_quad} for a sketch of the center piece with $A$ closer to $\frac{1}{2}$.}
\label{fig:omega_s_plus_tv_test_bis}
\end{figure}
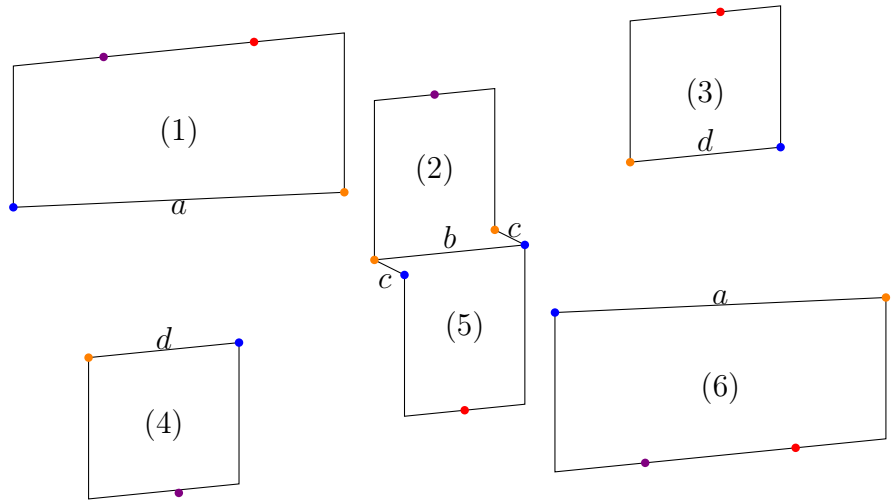


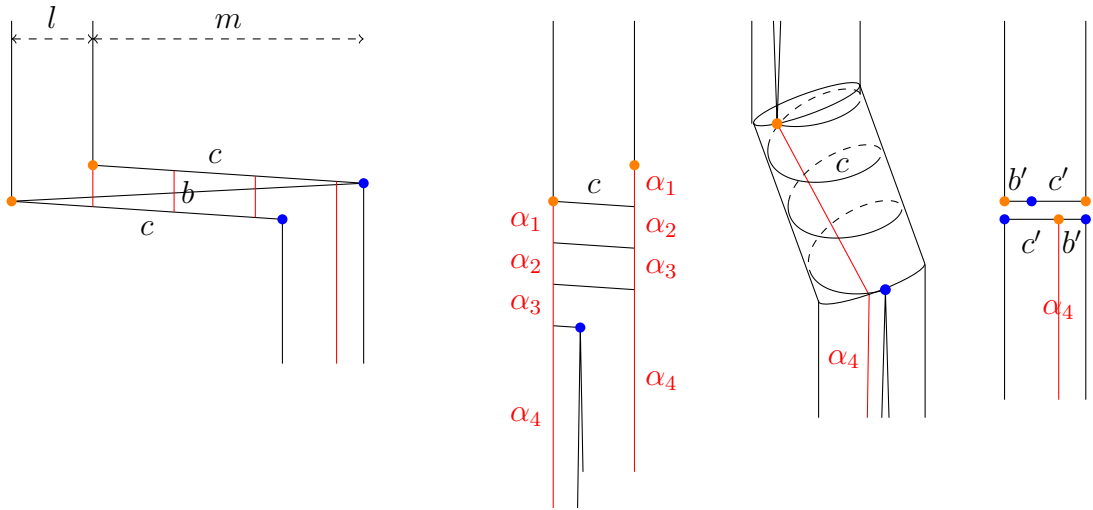
\begin{figure}
 \begin{center}
\begin{tikzpicture}[scale= .24]

\draw[-] (0,10)--(0,0)--(15,-1)--(15,-9);
\draw[-] (4.5,10)--(4.5,2)--(19.5,1)--(19.5,-9);
\draw[-] (0,0)--(19.5,1);
\draw[-,red] (4.5,2)--(4.5,-.3);
\draw[-,red] (9,1.7)--(9,-.6);
\draw[-,red] (13.5,1.4)--(13.5,-.9);
\draw[-,red] (18,1.1)--(18,-9);
\fill[orange] (0,0) circle (.28);
\fill[blue] (15,-1) circle (.28);
\fill[orange] (4.5,2) circle (.28);
\fill[blue] (19.5,1) circle (.28);
\node[above] at (11.25,1.5) {$c$};
\node[below] at (7.5,-.5) {$c$};
\node at (9.75,.5) {$b$};
\draw[<->,dashed] (0,9)--(4.5,9);
\node[above] at (2.25,9) {$l$};
\draw[<->,dashed] (4.5,9)--(19.5,9);
\node[above] at (12,9) {$m$};

\draw[-] (30,10)--(30,0);
\draw[-] (34.5,10)--(34.5,2);
\draw[-] (31.35,-17)--(31.5,-7)--(31.65,-15);
\draw[-] (30,0)--(34.5,-.3) node [above, midway] {$c$};
\draw[-] (30,-2.3)--(34.5,-2.6);
\draw[-] (30,-4.6)--(34.5,-4.9);
\draw[-] (30,-6.9)--(31.5,-7);
\draw[-,red] (30,0)--(30,-17);
\draw[-,red] (34.5,2)--(34.5,-15);
\fill[orange] (30,0) circle (.28);
\fill[orange] (34.5,2) circle (.28);
\fill[blue] (31.5,-7) circle (.28);
\node[left, red] at (30,-1.15) {$\alpha_1$};
\node[left, red] at (30,-3.45) {$\alpha_2$};
\node[left, red] at (30,-5.75) {$\alpha_3$};
\node[left, red] at (30,-11.95) {$\alpha_4$};
\node[right, red] at (34.5,.85) {$\alpha_1$};
\node[right, red] at (34.5,-1.45) {$\alpha_2$};
\node[right, red] at (34.5,-3.75) {$\alpha_3$};
\node[right, red] at (34.5,-9.95) {$\alpha_4$};

\draw[-] (42.2, 10)--(42.4, 4.3)--(42.6, 10);
\draw[-] (41, 10)--(41,4.3);
\draw[-] (47, 10)--(47, 5.9);

\draw[-] (48.2, -12)--(48.4, -5)--(48.6, -12);
\draw[-] (44.7, -12)--(44.7,-5.5);
\draw[-] (50.6, -12)--(50.6, -3.5);

\draw[-, red] (42.4, 4.3)--(47.5,-5.2)--(47.4, -12);
\node[red] at (46.1, -8.75) {$\alpha_4$};

\begin{scope} [shift={(46,0)}, tdplot_main_coords]

\node [cylinder,rotate = 110,draw,minimum width=1.5cm, minimum height=2.8cm](C){};

 \begin{scope}[rotate = 20]
  \pgfplothandlerlineto
  \pgfplotfunction{\t}{-570,-569,...,-480}
       {\pgfpointxyz {-3 * cos(\t)}{3 * sin(\t)}{1.45 + 3.8 * \t/360}} 
       \pgfusepath{stroke}
  \end{scope}

 \begin{scope}[rotate = 20, dashed]
  \pgfplothandlerlineto
  \pgfplotfunction{\t}{-480,-479,...,-270}
       {\pgfpointxyz {-3 * cos(\t)}{3 * sin(\t)}{1.45 + 3.8 * \t/360}} 
       \pgfusepath{stroke}
  \end{scope}

 \begin{scope}[rotate = 20]
  \pgfplothandlerlineto
  \pgfplotfunction{\t}{-270,-269,...,-90}
       {\pgfpointxyz {-3 * cos(\t)}{3 * sin(\t)}{1.45 + 3.8 * \t/360}} 
       \pgfusepath{stroke}
  \end{scope}

 \begin{scope}[rotate = 20, dashed]
  \pgfplothandlerlineto
  \pgfplotfunction{\t}{-90,-89,...,90}
       {\pgfpointxyz {-3 * cos(\t)}{3 * sin(\t)}{1.45 + 3.8 * \t/360}} 
       \pgfusepath{stroke}
  \end{scope}

 \begin{scope}[rotate = 20]
  \pgfplothandlerlineto
  \pgfplotfunction{\t}{90,91,...,270}
       {\pgfpointxyz {-3 * cos(\t)}{3 * sin(\t)}{1.45 + 3.8 * \t/360}} 
       \pgfusepath{stroke}
  \end{scope}

 \begin{scope}[rotate = 20, dashed]
  \pgfplothandlerlineto
  \pgfplotfunction{\t}{270,271,...,450}
       {\pgfpointxyz {-3 * cos(\t)}{3 * sin(\t)}{1.45 + 3.8 * \t/360}} 
       \pgfusepath{stroke}
  \end{scope}

 \begin{scope}[rotate = 20]
  \pgfplothandlerlineto
  \pgfplotfunction{\t}{450,451,...,560}
       {\pgfpointxyz {-3 * cos(\t)}{3 * sin(\t)}{1.45 + 3.8 * \t/360}} 
       \pgfusepath{stroke}
  \end{scope}
\end{scope}

\fill[orange] (42.4, 4.3) circle (0.3);
\fill[blue] (48.4, -4.9) circle (0.3);
\node at (46, 2) {$c$};

\begin{scope} [shift={(10,0)}]
\draw[-] (45,10)--(45,0)--(49.5,0)--(49.5,10);
\draw[-] (45,-11)--(45,-1)--(49.5,-1)--(49.5,-11);
\draw[-, red] (48,-11)--(48,-1);
\fill[orange] (45,0) circle (.28);
\fill[blue] (46.5,0) circle (.28);
\fill[orange] (49.5,0) circle (.28);
\fill[blue] (45,-1) circle (.28);
\fill[orange] (48,-1) circle (.28);
\fill[blue] (49.5,-1) circle (.28);
\node[red] at (48,-6) {$\alpha_4$};
\node[above] at (45.75,0) {$b'$};
\node[above] at (48,0) {$c'$};
\node[below] at (46.5,-1) {$c'$};
\node[below] at (48.75,-1) {$b'$};
\end{scope} 

\end{tikzpicture}
\end{center}
\caption{The leftmost picture is a sketch of the center piece of Figure \ref{fig:omega_s_plus_tv_test_bis}. Cutting vertical slits below the left endpoint of $c$ and regluing along $c$ yields the second picture. Gluing along $\alpha_i$, we get the representation in the third picture. Letting $C\to 0$, the edges $\alpha_1$, $\alpha_2$ and $\alpha_3$ are reduced to a point, as is shown in the right picture, with $|b'| = m \mod l$.}
\label{fig:convex_quad}
\end{figure}


\begin{figure}
 \begin{center}
\begin{tikzpicture}[scale= 2]
\draw[-] (0,1)--(4,1)--(4.6,2)--(.6,2)--(0,1);

  \fill[blue] (0,1) circle (0.9pt);
  \fill[orange] (2.2,1) circle (0.9pt);
  \fill[blue] (2.8,1) circle (0.9pt);
  \fill[orange] (3,1) circle (0.9pt);
  \fill[blue] (4,1) circle (0.9pt);

\node at (1.1,.9) {$a$};
\node at (2.5,.9) {$b'$};
\node at (2.9,.9) {$c'$};
\node at (3.5,.9) {$d$};
\node[left] at (.3,1.5) {$\alpha$};

  \fill[violet] (0.6,2) circle (0.9pt);
  \fill[red] (1.6,2) circle (0.9pt);
  \fill[violet] (2.6,2) circle (0.9pt);
  \fill[red] (3.6,2) circle (0.9pt);
  \fill[violet] (4.6,2) circle (0.9pt);

\node at (1.1,2.1) {$e$};
\node at (2.1,2.1) {$f$};
\node at (3.1,2.1) {$g$};
\node at (4.1,2.1) {$h$};

\draw[-] (0,.4)--(4,0.4)--(3.4,-.6)--(-.6,-.6)--(0,.4);

  \fill[orange] (0,.4) circle (0.9pt);
  \fill[blue] (1,0.4) circle (0.9pt);
  \fill[orange] (1.2,0.4) circle (0.9pt);
  \fill[blue] (1.8,0.4) circle (0.9pt);
  \fill[orange] (4,0.4) circle (0.9pt);

\node at (0.5,0.5) {$d$};
\node at (1.1,0.5) {$c'$};
\node at (1.5,0.5) {$b'$};
\node at (2.9,0.5) {$a$};
\node[left] at (-.3,-.1) {$\beta$};

  \fill[red] (-.6,-.6) circle (0.9pt);
  \fill[violet] (.4,-.6) circle (0.9pt);
  \fill[red] (1.4,-.6) circle (0.9pt);
  \fill[violet] (2.4,-.6) circle (0.9pt);
  \fill[red] (3.4,-.6) circle (0.9pt);

\node at (-.1,-.7) {$f$};
\node at (.9,-.7) {$e$};
\node at (1.9,-.7) {$h$};
\node at (2.9,-.7) {$g$};

\end{tikzpicture}
\end{center}
\caption{Final representation of $N^+_{A,B}$ for $\frac{1}{2}<A<1$, with $\vec{\alpha} = \vec{\beta} = (B+\frac12, \frac1A)$, $|\vec{b'}| = 1-A \mod 2A-1$ and $|\vec{c'}| = 2A-1 - |\vec{b'}|$. It was obtained by combining Figures \ref{fig:omega_s_plus_tv_test} and \ref{fig:convex_quad}.}
\label{fig:omega_t_final}
\end{figure}
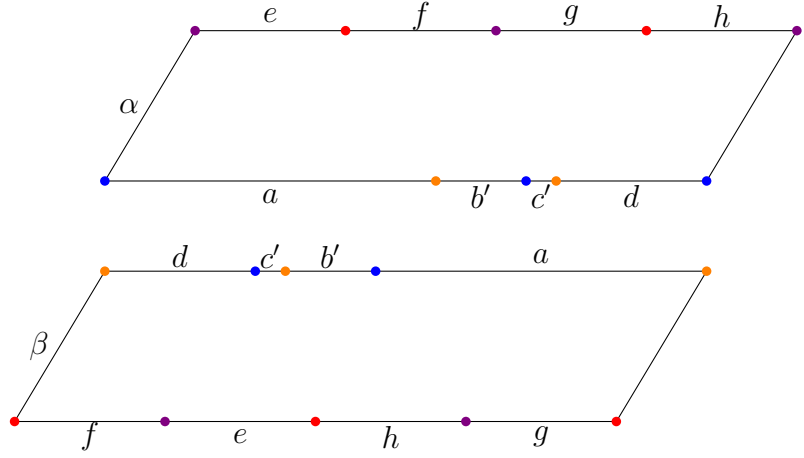

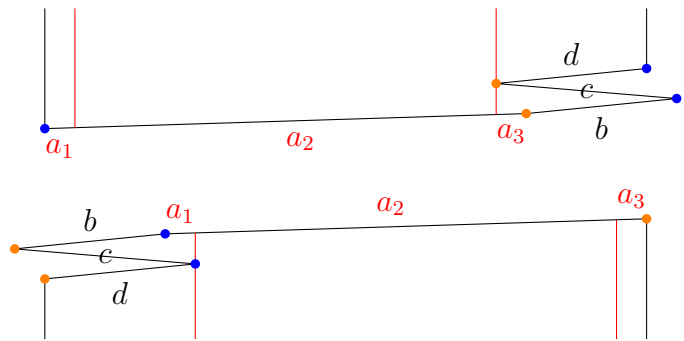
\begin{figure}
\begin{center}
\begin{tikzpicture}[scale= 2]
\draw[-] (0,1.8)--(0,1)--(3.2,1.1)--(4.2,1.2)--(3,1.3)--(4,1.4)--(4,1.8);
\draw[-, red] (.2, 1.8)--(.2, 1.00625);
\draw[-, red] (3, 1.8)--(3, 1.09375);

  \fill[blue] (0,1) circle (0.9pt);
  \fill[orange] (3.2,1.1) circle (0.9pt);
  \fill[blue] (4.2,1.2) circle (0.9pt);
  \fill[orange] (3,1.3) circle (0.9pt);
  \fill[blue] (4,1.4) circle (0.9pt);

\node[red, below] at (.1,1) {$a_1$};
\node[red, below] at (1.7,1.05) {$a_2$};
\node[red, below] at (3.1,1.1) {$a_3$};
\node[below] at (3.7,1.15) {$b$};
\node at (3.6,1.25) {$c$};
\node[above] at (3.5,1.35) {$d$};

\draw[-] (0,-0.4)--(0,0)--(1,0.1)--(-0.2,0.2)--(0.8,0.3)--(4,0.4)--(4,-0.4);
\draw[-, red] (1, -.4)--(1, .30625);
\draw[-, red] (3.8, -.4)--(3.8, .39375);

  \fill[orange] (0,0) circle (0.9pt);
  \fill[blue] (1,0.1) circle (0.9pt);
  \fill[orange] (-0.2,0.2) circle (0.9pt);
  \fill[blue] (0.8,0.3) circle (0.9pt);
  \fill[orange] (4,0.4) circle (0.9pt);

\node[below] at (0.5,.05) {$d$};
\node at (0.4,.15) {$c$};
\node[above] at (0.3,.25) {$b$};
\node[red, above] at (0.9,.3) {$a_1$};
\node[red, above] at (2.3,.35) {$a_2$};
\node[red, above] at (3.9,.4) {$a_3$};
\end{tikzpicture}
\end{center}
\caption{Sketch for $N_{A,B,C}$ with $1/5<A<\frac{1}{2}$ and $C>0$. Here $\RE(\dev(a_1)) = \RE(\dev(a_3)) = 1-2A$ and $\RE(\dev(a_2)) = 5A-1$.}
\label{fig:omega_s_plus_tv2}
\end{figure}


\begin{figure}
 \begin{center}
\begin{tikzpicture}[scale= 2]
\draw[-] (0,1)--(4,1)--(5.1,2)--(1.1,2)--(0,1);

  \fill[blue] (0,1) circle (0.9pt);
  \fill[orange] (0.1,1) circle (0.9pt);
  \fill[blue] (0.3,1) circle (0.9pt);
  \fill[orange] (3,1) circle (0.9pt);
  \fill[blue] (4,1) circle (0.9pt);

\node at (0.05,.9) {$a'$};
\node at (0.2,.9) {$b'$};
\node at (1.65,.9) {$c'$};
\node at (3.5,.9) {$d$};
  
  \fill[violet] (1.1,2) circle (0.9pt);
  \fill[red] (2.1,2) circle (0.9pt);
  \fill[violet] (3.1,2) circle (0.9pt);
  \fill[red] (4.1,2) circle (0.9pt);
  \fill[violet] (5.1,2) circle (0.9pt);

\node at (1.6,2.1) {$e$};
\node at (2.6,2.1) {$f$};
\node at (3.6,2.1) {$g$};
\node at (4.6,2.1) {$h$};

\draw[-] (0,.4)--(4,0.4)--(2.9,-.6)--(-1.1,-.6)--(0,.4);

  \fill[orange] (0,.4) circle (0.9pt);
  \fill[blue] (1,0.4) circle (0.9pt);
  \fill[orange] (3.7,0.4) circle (0.9pt);
  \fill[blue] (3.9,0.4) circle (0.9pt);
  \fill[orange] (4,0.4) circle (0.9pt);

\node at (0.5,0.53) {$d'$};
\node at (2.35,0.53) {$c'$};
\node at (3.8,0.53) {$b'$};
\node at (3.95,0.53) {$a'$};
  
  \fill[red] (-1.1,-.6) circle (0.9pt);
  \fill[violet] (-.1,-.6) circle (0.9pt);
  \fill[red] (.9,-.6) circle (0.9pt);
  \fill[violet] (1.9,-.6) circle (0.9pt);
  \fill[red] (2.9,-.6) circle (0.9pt);

\node at (-.6,-.7) {$f$};
\node at (.4,-.7) {$e$};
\node at (1.4,-.7) {$h$};
\node at (2.4,-.7) {$g$};
\end{tikzpicture}
\end{center}
\caption{Representation of $N^+_{A, B} $ for $1/5<A<\frac{1}{2}$, with $|\vec{c'}| = 5A-1$, $|\vec{b'}|=A \mod 1-2A$ and $|\vec{a'}| = 1-2A-|\vec{b'}|$.}
\label{fig:omega_t_final2}
\end{figure}
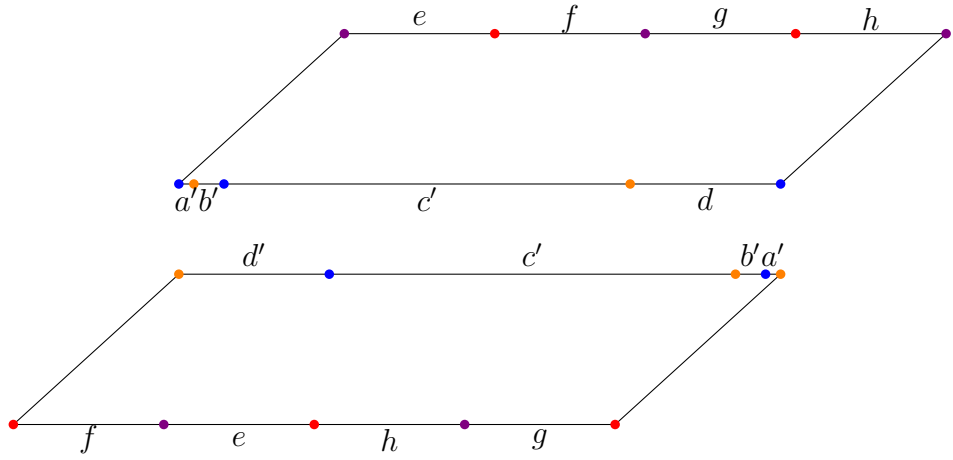


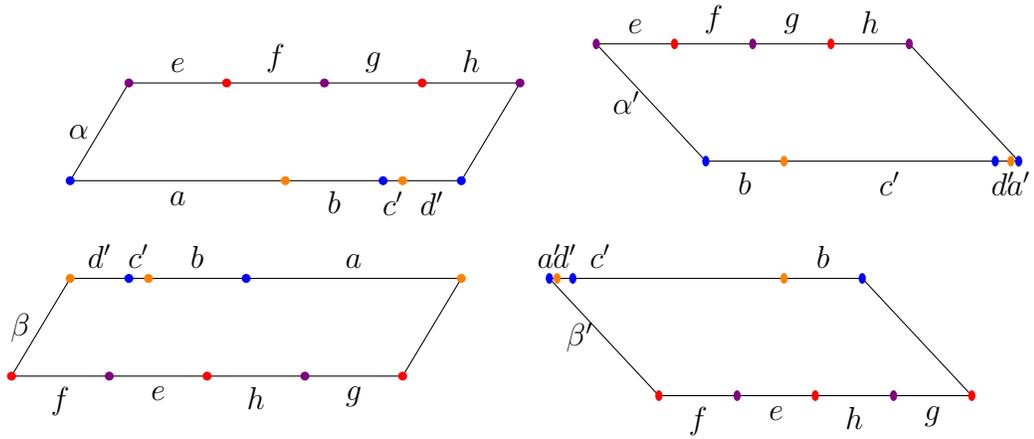
\begin{figure}
 \begin{center}
\begin{tikzpicture}[scale= 1.3]

\draw[-] (0,1)--(4,1)--(4.6,2)--(.6,2)--(0,1);

  \fill[blue] (0,1) circle (1.3pt);
  \fill[orange] (2.2,1) circle (1.3pt);
  \fill[blue] (3.2,1) circle (1.3pt);
  \fill[orange] (3.4,1) circle (1.3pt);
  \fill[blue] (4,1) circle (1.3pt);

\node[below] at (1.1,1) {$a$};
\node[below] at (2.7,1) {$b$};
\node[below] at (3.3,1) {$c'$};
\node[below] at (3.7,1) {$d'$};
\node[left] at (.3,1.5) {$\alpha$};

  \fill[violet] (0.6,2) circle (1.3pt);
  \fill[red] (1.6,2) circle (1.3pt);
  \fill[violet] (2.6,2) circle (1.3pt);
  \fill[red] (3.6,2) circle (1.3pt);
  \fill[violet] (4.6,2) circle (1.3pt);

\node[above] at (1.1,2) {$e$};
\node[above] at (2.1,2) {$f$};
\node[above] at (3.1,2) {$g$};
\node[above] at (4.1,2) {$h$};

\draw[-] (0,0)--(4,0)--(3.4,-1)--(-.6,-1)--(0,0);

  \fill[orange] (0,0) circle (1.3pt);
  \fill[blue] (.6,0) circle (1.3pt);
  \fill[orange] (.8,0) circle (1.3pt);
  \fill[blue] (1.8,0) circle (1.3pt);
  \fill[orange] (4,0) circle (1.3pt);

\node[above] at (0.3,0) {$d'$};
\node[above] at (.7,0) {$c'$};
\node[above] at (1.3,0) {$b$};
\node[above] at (2.9,0) {$a$};
\node[left] at (-.3,-.5) {$\beta$};

  \fill[red] (-.6,-1) circle (1.3pt);
  \fill[violet] (.4,-1) circle (1.3pt);
  \fill[red] (1.4,-1) circle (1.3pt);
  \fill[violet] (2.4,-1) circle (1.3pt);
  \fill[red] (3.4,-1) circle (1.3pt);

\node[below] at (-.1,-1) {$f$};
\node[below] at (.9,-1) {$e$};
\node[below] at (1.9,-1) {$h$};
\node[below] at (2.9,-1) {$g$};

\begin{scope}  [shift={(.1,0)}, xscale = .8, yscale = 1.2]
\draw[-] (8,1)--(12,1)--(10.6,2)--(6.6,2)--(8,1);

  \fill[blue] (8,1) circle (1.3pt);
  \fill[orange] (9,1) circle (1.3pt);
  \fill[blue] (11.7,1) circle (1.3pt);
  \fill[orange] (11.9,1) circle (1.3pt);
  \fill[blue] (12,1) circle (1.3pt);

\node[below] at (8.5,1) {$b$};
\node[below] at (10.35,1) {$c'$};
\node[below] at (11.8,1) {$d'$};
\node[below] at (12,1) {$a'$};
\node[left] at (7.3,1.5) {$\alpha'$};
  
  \fill[violet] (6.6,2) circle (1.3pt);
  \fill[red] (7.6,2) circle (1.3pt);
  \fill[violet] (8.6,2) circle (1.3pt);
  \fill[red] (9.6,2) circle (1.3pt);
  \fill[violet] (10.6,2) circle (1.3pt);

\node[above] at (7.1,2) {$e$};
\node[above] at (8.1,2) {$f$};
\node[above] at (9.1,2) {$g$};
\node[above] at (10.1,2) {$h$};

\draw[-] (6,0)--(10,0)--(11.4,-1)--(7.4,-1)--(6,0);

  \fill[blue] (6,0) circle (1.3pt);
  \fill[orange] (6.1,0) circle (1.3pt);
  \fill[blue] (6.3,0) circle (1.3pt);
  \fill[orange] (9,0) circle (1.3pt);
  \fill[blue] (10,0) circle (1.3pt);

\node[above] at (6,0) {$a'$};
\node[above] at (6.2,0) {$d'$};
\node[above] at (6.65,0) {$c'$};
\node[above] at (9.5,0) {$b$};
\node[left] at (6.7,-.5) {$\beta'$};
  
  \fill[red] (7.4,-1) circle (1.3pt);
  \fill[violet] (8.4,-1) circle (1.3pt);
  \fill[red] (9.4,-1) circle (1.3pt);
  \fill[violet] (10.4,-1) circle (1.3pt);
  \fill[red] (11.4,-1) circle (1.3pt);

\node[below] at (7.9,-1) {$f$};
\node[below] at (8.9,-1) {$e$};
\node[below] at (9.9,-1) {$h$};
\node[below] at (10.9,-1) {$g$};

\end{scope}
\end{tikzpicture}
\end{center}
\caption{Left: $N^-_{A,B}$ for $\frac{1}{2}<A<1$, with $\vec{\alpha} = \vec{\beta} = (B+\frac12, \frac1A)$, $|\vec{d'}| = 1-A \mod 2A-1$ and $|\vec{c'}| = 2A-1 - |\vec{b'}|$. Right: $N^-_{A,B}$ for $1/5<A<\frac{1}{2}$, with $\vec{\alpha'} = \vec{\beta'} = (B-(A+1)+\frac12, \frac1A)$, $|\vec{c'}| = 5A-1$, $|\vec{d'}|=A \mod 1-2A$ and $|\vec{a'}| = 1-2A-|\vec{d'}|$.} 
\label{fig:omega_t_minus}
\end{figure}


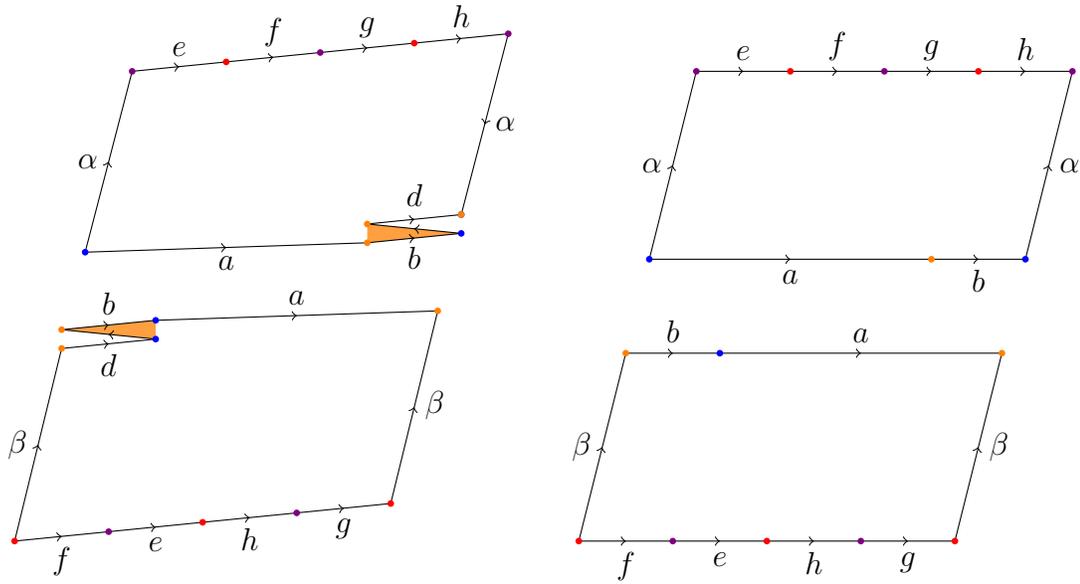
\begin{figure}
\begin{center}
\begin{tikzpicture}[scale=1.25]
\begin{scope} [shift={(-6,0)}]

\foreach \i in {0,...,3}{%
    \foreach \j in {0,5}{%
        \draw[middlearrow={>}] (\i+.25*\j,\j+.1*\i)--(\i+1+.25*\j,\j+.1+.1*\i) ;}}

\fill [fill=orange!75, draw = none] (1.5, 2.15) -- (.5, 2.25) -- (1.5, 2.35);
\draw[middlearrow={>}] (.5,2.05)--(1.5,2.15);
\draw[middlearrow={>}] (1.5,2.15)--(.5,2.25);
\draw[middlearrow={>}] (.5,2.25)--(1.5,2.35);
\draw[middlearrow={>}] (1.5,2.35)--(4.5,2.45);

\fill [fill=orange!75, draw = none] (3.75, 3.175) -- (4.75, 3.275) -- (3.75, 3.375);
\draw[middlearrow={>}] (.75,3.075)--(3.75,3.175);
\draw[middlearrow={>}] (3.75,3.175)--(4.75,3.275);
\draw[middlearrow={>}] (4.75,3.275)--(3.75,3.375);
\draw[middlearrow={>}] (3.75,3.375)--(4.75,3.475);

\draw[middlearrow={>}] (0,0)--(.5,2.05);
\draw[middlearrow={>}] (4,.4)--(4.5,2.45);
\draw[middlearrow={>}] (.75,3.075)--(1.25,5);
\draw[middlearrow={>}] (5.25,5.4)--(4.75,3.475);

\foreach \i in {0, 2, 4}{%
    \foreach \j in {0}{%
        \fill[red] (\i+.25*\j,\j+.1*\i) circle (1pt);}}
\foreach \i in {1, 3}{%
    \foreach \j in {0}{%
        \fill[violet] (\i+.25*\j,\j+.1*\i) circle (1pt);}}
\foreach \i in {0, 2, 4}{%
    \foreach \j in {5}{%
        \fill[violet] (\i+.25*\j,\j+.1*\i) circle (1pt);}}
\foreach \i in {1, 3}{%
    \foreach \j in {5}{%
        \fill[red] (\i+.25*\j,\j+.1*\i) circle (1pt);}}

\fill[blue] (1.5,2.15) circle (1pt);
\fill[blue] (1.5,2.35) circle (1pt);
\fill[blue] (.75,3.075) circle (1pt);
\fill[blue] (4.75,3.275) circle (1pt);
\fill[blue] (4.75,3.475) circle (1pt);
\fill[orange] (.5,2.05) circle (1pt);
\fill[orange] (.5,2.25) circle (1pt);
\fill[orange] (4.5,2.45) circle (1pt);
\fill[orange] (3.75,3.175) circle (1pt);
\fill[orange] (3.75,3.375) circle (1pt);
\fill[orange] (4.75,3.475) circle (1pt);

\node[left] at (1,4.0375) {$\alpha$};
\node[right] at (5,4.4375) {$\alpha$};
\node[left] at (.25,1.025) {$\beta$};
\node[right] at (4.25,1.45) {$\beta$};

\node[above] at (1.75,5.05) {$e$};
\node[above] at (2.75,5.15) {$f$};
\node[above] at (3.75,5.25) {$g$};
\node[above] at (4.75,5.35) {$h$};
\node[below] at (2.25,3.15) {$a$};
\node[below] at (4.25,3.25) {$b$};
\node[above] at (4.25,3.45) {$d$};

\node[below] at (1,2.1) {$d$};
\node[above] at (1,2.3) {$b$};
\node[above] at (3,2.4) {$a$};
\node[below] at (0.5,0.05) {$f$};
\node[below] at (1.5,0.15) {$e$};
\node[below] at (2.5,0.25) {$h$};
\node[below] at (3.5,0.35) {$g$};

\end{scope}


\foreach \i in {0,...,3}{%
    \foreach \j in {0,5}{%
        \draw[middlearrow={>}] (\i+.25*\j,\j)--(\i+1+.25*\j,\j) ;}}
\draw[middlearrow={>}] (.5,2)--(1.5,2);
\draw[middlearrow={>}] (1.5,2)--(4.5,2);
\draw[middlearrow={>}] (.75,3)--(3.75,3);
\draw[middlearrow={>}] (3.75,3)--(4.75,3);

\foreach \j in {0,3}{%
    \foreach \i in {0,4}{%
        \draw[middlearrow={>}] (\i+.25*\j,\j)--(\i+.5+.25*\j,\j+2) ;}}
\foreach \i in {0, 2}{%
    \fill[red] (\i,0) circle (1pt);
    \fill[red] (\i+2.25,5) circle (1pt);
    \fill[violet] (\i+1.25,5) circle (1pt);
    \fill[violet] (\i+1,0) circle (1pt);}
\fill[red] (4,0) circle (1pt);
\fill[violet] (5.25,5) circle (1pt);

\fill[blue] (1.5,2) circle (1pt);
\fill[blue] (.75,3) circle (1pt);
\fill[blue] (4.75,3) circle (1pt);
\fill[orange] (.5,2) circle (1pt);
\fill[orange] (4.5,2) circle (1pt);
\fill[orange] (3.75,3) circle (1pt);

\node[left] at (1,4) {$\alpha$};
\node[right] at (5,4) {$\alpha$};
\node[left] at (.25,1) {$\beta$};
\node[right] at (4.25,1) {$\beta$};

\node[above] at (1.75,5) {$e$};
\node[above] at (2.75,5) {$f$};
\node[above] at (3.75,5) {$g$};
\node[above] at (4.75,5) {$h$};
\node[below] at (2.25,3) {$a$};
\node[below] at (4.25,3) {$b$};

\node[above] at (1,2) {$b$};
\node[above] at (3,2) {$a$};
\node[below] at (0.5,0) {$f$};
\node[below] at (1.5,0) {$e$};
\node[below] at (2.5,0) {$h$};
\node[below] at (3.5,0) {$g$};

\end{tikzpicture}
\end{center}
\caption{Left: the surface $N_{\frac12, 0, C}$ with $C = 0.1$. The two orange triangles glued along $b$ form a horizontal cylinder. As $C\to 0$, this cylinder degenerates. Right: the surface $N_{\frac12, 0,0}$. Note that the blue and orange points have cone angle $2\pi$, thus $N_{\frac12,0,0} \in \HH_1(1,1,0,0)$.}
\label{fig:Nhalf}
\end{figure}


\chapter{General case of Veech surfaces}
\label{chap:generalCase}

Given a translation surface $M$, we write $\SL(M)\subset \SL(2,\RR)$ for the group of affine transformations of $M$.
Recall that $M$ is said to be a Veech surface if $\SL(M)$ is a lattice of $\SL(2,\RR)$, and that in this case we have the following dichotomy.
See \cite{hs06} or \cite{v89}.
\begin{thm}[Veech dichotomy]
Let $M$ be a Veech translation surface. 
Then for any direction $\theta$ the flow in direction $\theta$ is either ergodic or $M$ decomposes into a finite number of cylinders in the direction $\theta$ that have pairwise commensurable moduli $\mu_i$, i.e. for all $i$ and $j$, $\mu_i/\mu_j\in \QQ$.
\end{thm}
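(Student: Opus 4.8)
The plan is the classical argument of Veech (see \cite{v89}, and \cite{hs06} for an exposition): dichotomize according to whether the Teichm\"uller geodesic ray singled out by $\theta$ stays in a compact part of the $\SL(2,\RR)$-orbit of $M$ or escapes into a cusp. After a rotation by an element of $SO(2)$ I may assume $\theta$ is the horizontal direction. Since $M$ is Veech, $\Gamma=\SL(M)$ is a lattice, the orbit $\SL(2,\RR)\cdot M$ is closed, and $\SL(2,\RR)\cdot M\cong\SL(2,\RR)/\Gamma$ has finite volume and covers a finite-area hyperbolic orbifold. Consider the forward ray $t\mapsto g_{-t}M$ ($t\ge 0$), which contracts the horizontal direction: exactly one of the following holds, and the two are mutually exclusive: (a) $g_{-t_n}M$ stays in a fixed compact subset of $\SL(2,\RR)/\Gamma$ for some $t_n\to\infty$; (b) $g_{-t}M$ eventually leaves every compact subset.

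In case (a) I would apply Masur's criterion: recurrence of the geodesic that contracts the horizontal direction implies that the horizontal measured foliation of $M$ is uniquely ergodic, hence the horizontal straight-line flow on $M$ is uniquely ergodic, in particular ergodic. This gives the first alternative.

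In case (b), finiteness of the volume forces the escape to be into a cusp, and the thick-thin structure of the quotient hyperbolic orbifold (equivalently, the Diophantine picture on the modular surface) produces a parabolic $\phi\in\Aff(M)$ whose derivative fixes the horizontal direction; conjugating, and replacing $\phi$ by $\phi^{-1}$ if necessary, $\Der(\phi)=\bigl(\begin{smallmatrix}1&s\\0&1\end{smallmatrix}\bigr)$ with $s>0$. To extract a cylinder decomposition: $\Aff(M)$ permutes the finite sets of singularities and of horizontal separatrices, so a power $\psi=\phi^m$ fixes every singularity and sends every horizontal separatrix to itself; since $\Der(\psi)$ fixes the unit horizontal vector, $\psi$ restricts to an arc-length isometry of each separatrix onto itself fixing its basepoint and direction, hence is the identity there, so $\psi$ fixes every horizontal separatrix, and therefore every horizontal saddle connection, pointwise. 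If the horizontal foliation had a minimal component, it would have nonempty interior and contain a horizontal separatrix dense in it, so $\psi$ would fix a dense set pointwise, forcing $\Der(\psi)=\id$ and contradicting parabolicity of $\Der(\phi)$. Hence the horizontal foliation is periodic; as there are only finitely many horizontal saddle connections, $M=C_1\cup\dots\cup C_r$ is a finite union of horizontal cylinders. On $C_i$, with core-curve length $c_i$ and height $h_i$, $\psi$ is affine with derivative $\bigl(\begin{smallmatrix}1&ms\\0&1\end{smallmatrix}\bigr)$ and fixes both boundary circles pointwise, which forces $ms\,h_i=k_i c_i$ for a positive integer $k_i$; thus $\mu_i=h_i/c_i=k_i/(ms)$ and $\mu_i/\mu_j=k_i/k_j\in\QQ$ for all $i,j$. (In this case the horizontal flow is periodic, hence not ergodic, which is why the two alternatives do not overlap.)

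I expect the \emph{main obstacle} to be case (b): both inputs there---that a divergent geodesic in a finite-volume quotient yields a parabolic of $\Gamma$ fixing the contracted direction, and that a parabolic element of $\Aff(M)$ forces a finite cylinder decomposition with commensurable moduli---carry the real content of the theorem and require the bookkeeping above (permuting singularities and separatrices, ruling out minimal components, and matching the number of Dehn twists inside each cylinder). Case (a) is immediate once Masur's criterion is granted. In a write-up I would reduce case (b) to these two classical facts and then give the short separatrix computation for the cylinder structure and the commensurability count.
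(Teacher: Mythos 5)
Your proof is correct and follows the standard argument for the Veech dichotomy that appears in the references the paper cites (\cite{v89}, \cite{hs06}); the paper itself gives no proof beyond the citation. The recurrence/divergence dichotomy with Masur's criterion on one branch, the parabolic $\Rightarrow$ periodic horizontal foliation argument (fixing separatrices pointwise, ruling out a minimal component) on the other, and the Dehn-twist count $ms\,h_i=k_i c_i$ giving $\mu_i/\mu_j=k_i/k_j\in\QQ$ are exactly the steps in those sources.
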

Let now $M$ be a Veech translation surface. We are going to examine that part of its locus $\MM$ for which surfaces admit decompositions into horizontal cylinders. Up to rotation we may assume that $M$ is such a surface, and we write $\Cyl_i$ for the horizontal cylinders composing $M$.
We fix a real vector $v\in H^1(M, \Sigma, \RR)$ such that $\Dir(M,v)$ is finite.
Thanks to \cite{w15} where the computation of $\Dir(M, v)$ is carefully detailed, we know that infinitely many examples of such pairs of surfaces and vectors can be constructed by looking at abelian covers of the pillow-case.

For each $i$, we choose a decomposition of $\Cyl_i$ as in Figure \ref{fig:C_i}.
In this decomposition, we put the $a^i_j$ at the bottom of each $\Cyl_i$ and the $b^i_j$ at the top. $\alpha_i$ are on the sides (we do not require that the $\alpha_i$ are vertical).
Note the orientation of the edges.
Because $M$ is a Veech surface, the moduli of all cylinders are pairwise commensurable and the finiteness condition on $v$ implies that all $v(a^i_1 + a^i_2 + \ldots)$ and all $v(b^i_1+ b^i_2+ \ldots)$ are 0. 
Call $l_i = a^i_1 + a^i_2 + \ldots >0 $ the length of $\Cyl_i$ and $h_i = \IM(\alpha_i)>0$ its height.
 
\begin{figure}
\begin{center}
\begin{tikzpicture}[scale=1.5]
\draw[middlearrow={>}] (0.4, 1)--(1.1, 1);
  \fill (.4,1) circle (.8pt);
\draw[middlearrow={>}] (1.1, 1)--(1.8, 1);
  \fill (1.1,1) circle (.8pt);
\draw[middlearrow={>}] (1.8, 1)--(2.5, 1);
  \fill (1.8,1) circle (.8pt);
\draw[middlearrow={>}] (2.5, 1)--(3, 1);
  \fill (2.5,1) circle (.8pt);
\draw[middlearrow={>}] (3, 1)--(3.7, 1);
  \fill (3,1) circle (.8pt);
  \fill (3.7,1) circle (.8pt);

\draw[middlearrow={>}] (0, 0)--(1.3, 0);
  \fill (0,0) circle (.8pt);
\draw[middlearrow={>}] (1.3, 0)--(1.6, 0);
  \fill (1.3,0) circle (.8pt);
\draw[middlearrow={>}] (1.6, 0)--(2.9, 0);
  \fill (1.6,0) circle (.8pt);
\draw[middlearrow={>}] (2.9, 0)--(3.3, 0);
  \fill (2.9,0) circle (.8pt);
  \fill (3.3,0) circle (.8pt);

\draw[middlearrow={>}] (0, 0)--(0.4, 1);
\draw[middlearrow={>}] (3.3, 0)--(3.7, 1);

\node at (0,.5) {$\alpha_i$};
\node at (3.7,.5) {$\alpha_i$};

\node at (0.65,1.2) {$b^i_1$};
\node at (1.45,1.2) {$b^i_2$};
\node at (2.15,1.2) {$b^i_3$};
\node at (2.75,1.2) {$b^i_4$};
\node at (3.35,1.2) {$b^i_5$};

\node at (.65,-.2) {$a^i_1$};
\node at (1.45,-.2) {$a^i_2$};
\node at (2.25,-.2) {$a^i_3$};
\node at (3.1,-.2) {$a^i_4$};

\draw[red, densely dashed] (.2,.5)--(3.5,.5);
\fill[red] (0.2,.5) circle (0.8pt);
\fill[red] (3.5,.5) circle (0.8pt);

\end{tikzpicture}
\end{center}
\caption{One cylinder $\Cyl_i$ in the decomposition of $M$ in horizontal cylinders.}
\label{fig:C_i}
\end{figure}
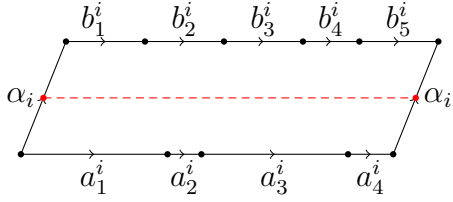

As above, set 
\[
    M_{A, B, C} =\left( \begin{array}{cc}
A & B \\
C & \frac{1+BC}{A} \end{array} \right) .M
\]
and when defined 
\[
N_{A, B, C} = \Push(M_{A, B,C},v_0).
\]

\begin{rem}
\label{rem:small_As}
Let $A$ be big enough that for all $i$ and $k$, $A \RE(\dev(M,a^i_k))>|v(a^i_k)|$. Then $N_{A, 0,0 }$ exists and lies in the same stratum as $M$, so as in Remark \ref{rem:thm1}, the interesting behavior happens as $A$ becomes smaller.
\end{rem}

Lemma \ref{lem:cyclic} below will allow us to find representations of $N_{A,B,C}$ as an union of polygons with gluing when $C\neq 0$ is small (see Proposition \ref{prop:normal}).
\begin{lem}
\label{lem:cyclic}
Up to relabeling the $a^i_k$, we can assume that the $v(a^i_1)$, $v(a^i_1 + a^i_2)$, $\ldots$ are all non-negative (resp. non-positive), and that the $v(b^i_1)$, $v(b^i_1 + b^i_2)$, $\ldots$ are all non-positive (resp. non-negative). 
\end{lem}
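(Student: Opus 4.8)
The plan is to reduce this to an elementary combinatorial statement about cyclic sequences of real numbers with vanishing total sum, and then to observe that the required relabelings are legitimate, in the sense that they correspond to re-cutting the cylinders of $M$ and hence do not change the translation surface.

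First I would recall the input already isolated above: since $\Dir(M,v)$ is finite, for each cylinder $\Cyl_i$ we have $v(a^i_1 + a^i_2 + \cdots) = 0$ and $v(b^i_1 + b^i_2 + \cdots) = 0$, the sums ranging over all bottom (resp.\ top) edges of $\Cyl_i$. Fix $i$ and write $x_k = v(a^i_k)$ for $k = 1, \ldots, n_i$, with partial sums $S_0 = 0$ and $S_k = x_1 + \cdots + x_k$, so that $S_{n_i} = 0$. Choose $j \in \{0, \ldots, n_i - 1\}$ at which $S_j$ is minimal, and relabel the bottom edges cyclically so that the old $(j+1)$-st edge becomes the new first one. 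Every partial sum of the relabeled sequence then has the form $S_{j+m} - S_j$ or $S_{j+m-n_i} - S_j$ for indices lying in $\{1,\ldots,n_i\}$, which is $\ge 0$ by minimality of $S_j$; this makes all of $v(a^i_1), v(a^i_1 + a^i_2), \ldots$ non-negative (the full sum itself being $0$). Running the same argument on the top edges, but choosing $j$ so as to \emph{maximize} the partial sums of the $v(b^i_k)$ instead, makes all of $v(b^i_1), v(b^i_1 + b^i_2), \ldots$ non-positive. Performing these choices independently in every cylinder gives the first alternative of the statement; the ``resp.'' alternative is obtained by exchanging ``minimal'' and ``maximal'' in the two steps.

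It then remains to check that these relabelings do not change $M$. Cyclically relabeling $a^i_1, \ldots, a^i_{n_i}$ amounts to choosing a different bottom singular point of $\Cyl_i$ as the foot of the transverse cut $\alpha_i$, and cyclically relabeling $b^i_1, \ldots, b^i_{m_i}$ amounts to choosing a different top singular point as its head; for any such pair of choices there is a transverse segment inside $\Cyl_i$ joining the two, and cutting $\Cyl_i$ along it and regluing recovers $M$, now presented with a polygonal decomposition of exactly the form of Figure \ref{fig:C_i} carrying the desired labels. The main, and only mildly delicate, point is precisely this last piece of bookkeeping — that the foot and the head of $\alpha_i$ can be prescribed independently, so as to realize any pair of cyclic offsets of the bottom and top edges while keeping the picture of Figure \ref{fig:C_i} intact; the combinatorial core, the cyclic-rotation argument of the previous paragraph, is entirely elementary.
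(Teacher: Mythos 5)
Your proposal is correct and takes essentially the same approach as the paper's proof. The paper also reduces to a cyclic-rotation argument on the partial sums of $v(a^i_k)$ (phrased there as maximizing the ``segment sums'' $(x,l)\mapsto \sum_{m=0}^l \varphi(x+m)$ over $\ZZ/k\times\NN$, which is equivalent to your choice of a minimizing prefix-sum index $j$), and it likewise dismisses the geometric legitimacy of the relabeling as a ``standard cut and paste on $\Cyl_i$''; your last paragraph just spells out that cut-and-paste, and makes explicit the point — left implicit in the paper — that the cyclic offsets of the bottom and top edges of each cylinder can be prescribed independently by choosing the foot and head of the transverse cut $\alpha_i$ separately.
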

\begin{proof}
Let us only prove the claim that all that all the $v(a^i_1+\ldots +a^i_j)$ are non-negative. The other cases are similar.
Note that for any map $\varphi: \ZZ/k \to \RR$ such that $\sum_{x\in \ZZ/k} \varphi(x)=0$, the following map  
\[
\begin{cases} \ZZ/k \times \NN \to \RR\\
(x,l) \mapsto \sum_{i=0}^l \varphi(x + i) \end{cases} 
\]
has a finite image, so its maximum is obtained at some pair $(x_0,l_0)$.
Applying this to $\varphi: x \mapsto v(a^i_x)$ with $k$ the number of bottom horizontal edges in $C^-_i$, we obtain some index $x_0$ that up to a standard cut and paste on $\Cyl_i$ we can assume is 1.
Now we see that no $v(a^i_1 + \ldots + a^i_l)$ can be negative since then $v(a^i_{l+1} + a^i_{l+2} + \ldots + a^i_{1+l_0})$ would be bigger that $v(a^i_1 + \ldots + a^i_{1+l_0})$.
This would contradict the definition of $(x_0, l_0)$.
\end{proof}

\begin{prop}
\label{prop:normal}
There exists a number $\eta>0$ such that for any $(A,B,C) \in \RR^{>0} \times \RR \times (-\eta, \eta) $, the push $N_{A,B,C}$ is well-defined and in the same stratum as $M$.
\end{prop}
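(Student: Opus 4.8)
The plan is to produce $N_{A,B,C}$ by hand, at every intermediate time $s\in[0,1]$ of the push, as a finite union of non-degenerate polygons whose edge identifications are all inherited from those of $M$; reading off the combinatorics of this model then shows that the surface stays in $\HH(\kappa)$ for every $s$, and the construction will only ever require $|C|$ to lie below a bound depending on $M$ and $v$ alone. Throughout I take $C\neq0$ (this is where Lemma~\ref{lem:cyclic} applies), and by the symmetry $C\leftrightarrow -C$ I may assume $C>0$; after the relabelling of Lemma~\ref{lem:cyclic} the partial sums $v(a^i_1+\dots+a^i_j)$ are all $\geq0$ and the partial sums $v(b^i_1+\dots+b^i_j)$ are all $\leq0$, while both full sums vanish by the finiteness of $\Dir(M,v)$.

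First I would record the periods. Since $M_{A,B,C}=g_{A,B,C}\cdot M$ with $g_{A,B,C}$ the matrix in the definition of $M_{A,B,C}$, and $v$ is the same class in $H^1(M,\Sigma;\RR)$ throughout, the period cocycle at time $s$ of the push (if it exists) is $\omega_s:=g_{A,B,C}\,\dev(M)+s\,v$. On a bottom edge this gives $\omega_s(a^i_k)=\bigl(A\,\RE\dev(M,a^i_k)+s\,v(a^i_k),\ C\,\RE\dev(M,a^i_k)\bigr)$, and similarly on the $b^i_k$, while $\omega_s(\alpha_i)=g_{A,B,C}\,\dev(M,\alpha_i)+s\,v(\alpha_i)$. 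Two quantities are constant along the push: the period of the core curve $\gamma_i$ of $\Cyl_i$ stays equal to $(A\,l_i,\ C\,l_i)$, because $v(\gamma_i)=\sum_k v(a^i_k)=0$; and the twist determinant $\det\bigl(\omega_s(\gamma_i)\,|\,\omega_s(\alpha_i)\bigr)$ equals $l_i\bigl(h_i-C\,s\,v(\alpha_i)\bigr)$. Setting $\eta:=\min_i h_i/\bigl(2\max_j|v(\alpha_j)|+1\bigr)$, this determinant is $\geq\tfrac12 l_i h_i>0$ for all $s\in[0,1]$ and all $A>0$, $B\in\RR$, $|C|<\eta$; this is the uniform control that will make $\eta$ independent of $A$ and $B$.

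The core of the argument is the polygonal decomposition of each $\Cyl_i$ inside $N_{A,B,s}$. Its lower boundary is the path through the vertices $P^i_j=P^i_0+\sum_{k\leq j}\omega_s(a^i_k)$, whose imaginary coordinate is $C\sum_{k\leq j}\RE\dev(M,a^i_k)$, strictly increasing in $j$ because $C>0$; hence this path is an embedded arc from $P^i_0$ to $P^i_0+(A\,l_i,C\,l_i)$ — the single step that genuinely needs $C\neq0$, since for $C=0$ the path collapses onto a horizontal line and folds back on itself as soon as some $v(a^i_k)<0$. When $A$ is not too small relative to $v$ (as in Remark~\ref{rem:small_As}) the sheared-and-pushed cylinder is already an embedded polygon and nothing further is needed. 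For small $A$, the real coordinate $A\sum_{k\leq j}\RE\dev(M,a^i_k)+s\,v(a^i_1+\dots+a^i_j)$ of $P^i_j$ first grows and then returns, so the naive cylinder is immersed but not embedded; here I would cut $\Cyl_i$ along vertical slits issuing from the vertices $P^i_j$ (and the corresponding upper vertices) at which the one-signed partial sums reach new extrema, slide the resulting pieces across the edge identifications, and reglue — precisely the move carried out for the Wollmilchsau in Figures~\ref{fig:omega_s_plus_tv_test}--\ref{fig:convex_quad}. Lemma~\ref{lem:cyclic} is exactly what makes this terminate: with the partial sums of one sign the ``overshoot'' is monotone, the recutting stops after finitely many steps, and at every step the positivity of the twist determinant above keeps the pieces non-degenerate, all with the same bound $|C|<\eta$.

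Finally, every edge identification in this decomposition of $N_{A,B,s}$ is one of $M$'s (the cut-and-reglue only subdivides existing edges and permutes pieces within a cylinder), so no singular points are merged and the cone angle at each singular point is unchanged; hence $N_{A,B,s}\in\HH(\kappa)$ for every $s\in[0,1]$. Since this family of surfaces depends continuously (piecewise real-analytically) on $s$, starts at $M_{A,B,C}$, and develops to $\omega_s$, it is the push path, so $N_{A,B,C}$ is well-defined and lies in $\HH(\kappa)$. The place I expect to cost real work is the cut-and-reglue step for small $A$: one must check that the recut polygons are genuinely embedded and not merely immersed, and that the whole family of decompositions can be chosen to vary continuously in $s$ simultaneously for every $A>0$ and $B\in\RR$, so that it assembles into the actual push rather than into an abstract surface that merely has the right periods.
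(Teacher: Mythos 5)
Your period computation, the observation that for $C\neq0$ the bottom boundary path of each $\Cyl_i$ becomes a graph over the $y$-axis and hence a simple arc, and the determinant $\det\bigl(\omega_s(\gamma_i)\,\big|\,\omega_s(\alpha_i)\bigr) = l_i\bigl(h_i - Csv(\alpha_i)\bigr)$ are all correct, and these are the same non-degeneracy conditions the paper works with; your choice of $\eta$ is a valid (if conservative) uniform bound. The gap is the one you flag yourself, in the cut-and-slide step for small $A$ --- and in fact that step is unnecessary.

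The paper does not model $\Cyl_i$ as a single polygon and then recut when it fails to be embedded. It decomposes each $\Cyl_i$ once and for all into \emph{three} pieces, uniformly in $A$ and $B$ (Figure~\ref{fig:IET}, left): the central parallelogram spanned by $\omega(\gamma_i)=(Al_i,\,Cl_i)$ and $\omega(\alpha_i)$, and the two non-convex polygons bounded on one side by the $a$-path (resp.\ $b$-path) and on the other by the side $[P_0,P_{\mathrm{last}}]$ of the parallelogram. Lemma~\ref{lem:cyclic} gives $P_l^i\cdot\vec n = -Cv(a_1^i+\cdots+a_l^i)\big/\sqrt{A^2+C^2}\le 0$: for $C>0$ the $a$-path is a graph in $y$ and never crosses to the parallelogram side of that segment, so each of the two outer polygons is a Jordan domain for every $A>0$ and every $B\in\RR$. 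Together with $h_i - Cv(\alpha_i)>0$ for the parallelogram, that is the whole proof; there is no ``overshoot'' to control and no $A$-dependent slitting. By contrast, your single-polygon model fails to be embedded not only for small $A$ but also for large $A$ or sufficiently negative $B$, since $\IM\,\omega(\alpha_i)=Cx_i+(1+BC)h_i/A$ can drop below $Cl_i$ and the top and bottom arcs then overlap in $y$; Remark~\ref{rem:small_As} only covers the $C=0$ case. And, as you suspected, positivity of your twist determinant controls only the parallelogram and says nothing about the embeddedness of the pieces your cut-and-slide produces, while the combinatorics of the slitting changes discontinuously in $A$ and in $s$. The fixed three-piece decomposition removes both problems at the source.
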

\begin{proof}
Let us first investigate the case $C>0$. We apply Lemma \ref{lem:cyclic} so that $N_{A,B,C}$ admits the polygonal representation depicted in Figure \ref{fig:IET} on the left.
Indeed, because each $v(a^i_1 + \ldots + a^i_l)$ is non-negative, each endpoint of $a^i_l$ in the polygon in Figure \ref{fig:IET} is on or at the right of the segment in dashed orange, thus it does not enter the center parallelogram. Similarly, the endpoints of $b^i_l$ never enter the center parallelogram.
The only additional condition we need to check is that the center parallelogram is non-degenerate, i.e. that $\left(\dev(M_{A,B,C}, \alpha_i)+v(\alpha_i) \right)\cdot \vec{n} >0$ where $\vec{n}$ is the orthonormal vector making an angle of $+\pi/2$ with $\dev(M_{A,B,C}, a^i_1 + a^i_2 + \ldots) + v(a^i_1 + a^i_2 + \ldots) = l_i (A,C)$. By direct computation, this is equivalent to $-Cv(\alpha_i)+h_i>0$. 
Therefore the push $N_{A,B,C}$ exists for any $C>0$ verifying $C<\max_i(\frac{h_i}{|v(\alpha_i)|})$.

The case $C<0$ is similar. Note that we might have to relabel the $a^i_k$, resp. $b^i_k$. Writing $\widetilde{\alpha_i}$ for the new side edges of the parallelogram in Figure \ref{fig:IET}, we find again that $N_{A,B,C}$ exists for any $C<0$ verifying $C>-\max_i(\frac{h_i}{|v(\widetilde{\alpha_i})|})$, and the proof is complete.
\end{proof}

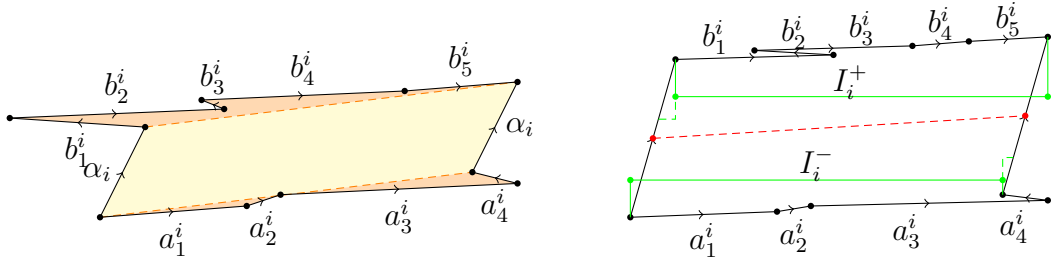
\begin{figure}
\begin{center}
\begin{tikzpicture}[scale=1.5]
\begin{scope}[shift={(-4.7,0)}]

\fill[yellow!20!white] (0,0)--(3.3,.4)--(3.7,1.2)--(.4,.8)--(0,0);
\fill[orange!30!white] (.4,.8)--(-.8,.88)--(1.1,.96)--(.9,1.04)--(2.7,1.12)--(3.7,1.2)--(.4,.8);
\fill[orange!30!white] (0,0)--(1.3,.1)--(1.6,.2)--(3.7,.3)--(3.3,.4)--(0,0);
\draw[orange, densely dashed] (0,0)--(3.3,.4);
\draw[orange, densely dashed] (.4,0.8)--(3.7,1.2);

\draw[middlearrow={>}] (0.4, .8)--(-.8, .88);
  \fill (.4,.8) circle (.8pt);
\draw[middlearrow={>}] (-.8, .88)--(1.1, .96);
  \fill (-.8,.88) circle (.8pt);
\draw[middlearrow={>}] (1.1, .96)--(.9, 1.04);
  \fill (1.1,.96) circle (.8pt);
\draw[middlearrow={>}] (.9, 1.04)--(2.7, 1.12);
  \fill (.9,1.04) circle (.8pt);
\draw[middlearrow={>}] (2.7, 1.12)--(3.7, 1.2);
  \fill (2.7,1.12) circle (.8pt);
  \fill (3.7,1.2) circle (.8pt);

\draw[middlearrow={>}] (0, 0)--(1.3, 0.1);
  \fill (0,0) circle (.8pt);
\draw[middlearrow={>}] (1.3, 0.1)--(1.6, 0.2);
  \fill (1.3,0.1) circle (.8pt);
\draw[middlearrow={>}] (1.6, 0.2)--(3.7, 0.3);
  \fill (1.6,.2) circle (.8pt);
\draw[middlearrow={>}] (3.7, 0.3)--(3.3, 0.4);
  \fill (3.7,.3) circle (.8pt);
  \fill (3.3,.4) circle (.8pt);

\draw[middlearrow={>}] (0, 0)--(0.4, .8);
\draw[middlearrow={>}] (3.3, 0.4)--(3.7, 1.2);


\node[below] at (-.2,.84) {$b^i_1$};
\node[above] at (.15,.92) {$b^i_2$};
\node[above] at (1,1) {$b^i_3$};
\node[above] at (1.8,1.08) {$b^i_4$};
\node[above] at (3.2,1.16) {$b^i_5$};

\node[below] at (.65,.05) {$a^i_1$};
\node[below] at (1.45,.15) {$a^i_2$};
\node[below] at (2.65,.25) {$a^i_3$};
\node[below] at (3.5,.35) {$a^i_4$};

\node[left] at (.2,.4) {$\alpha_i$};
\node[right] at (3.5,.8) {$\alpha_i$};

\end{scope}


\draw[middlearrow={>}] (0.4, 1.4)--(1.8, 1.44);
  \fill (.4,1.4) circle (.8pt);
\draw[middlearrow={>}] (1.8, 1.44)--(1.1, 1.48);
  \fill (1.8,1.44) circle (.8pt);
\draw[middlearrow={>}] (1.1, 1.48)--(2.5, 1.52);
  \fill (1.1,1.48) circle (.8pt);
\draw[middlearrow={>}] (2.5, 1.52)--(3, 1.56);
  \fill (2.5,1.52) circle (.8pt);
\draw[middlearrow={>}] (3, 1.56)--(3.7, 1.6);
  \fill (3,1.56) circle (.8pt);
  \fill (3.7,1.6) circle (.8pt);

\draw[middlearrow={>}] (0, 0)--(1.3, 0.05);
  \fill (0,0) circle (.8pt);
\draw[middlearrow={>}] (1.3, 0.05)--(1.6, 0.1);
  \fill (1.3,0.05) circle (.8pt);
\draw[middlearrow={>}] (1.6, 0.1)--(3.7, 0.15);
  \fill (1.6,.1) circle (.8pt);
\draw[middlearrow={>}] (3.7, 0.15)--(3.3, 0.2);
  \fill (3.7,.15) circle (.8pt);
  \fill (3.3,.2) circle (.8pt);

\draw[middlearrow={>}] (0, 0)--(0.4, 1.4);
\draw[middlearrow={>}] (3.3, 0.2)--(3.7, 1.6);


\node at (0.75,1.6) {$b^i_1$};
\node at (1.45,1.64) {$b^i_2$};
\node at (2.05,1.68) {$b^i_3$};
\node at (2.75,1.72) {$b^i_4$};
\node at (3.35,1.76) {$b^i_5$};

\node at (.65,-.2) {$a^i_1$};
\node at (1.45,-.16) {$a^i_2$};
\node at (2.45,-.12) {$a^i_3$};
\node at (3.4,-.08) {$a^i_4$};

\draw[red, densely dashed] (.2,.7)--(3.5,.9);
\fill[red] (0.2,.7) circle (0.8pt);
\fill[red] (3.5,.9) circle (0.8pt);

\draw[green] (0, 0)--(0,.33)--(3.3,.33)--(3.3,0.2);
\draw[green, densely dashed] (3.3,.33)--(3.3,.53)--(3.394,.53);
\fill[green] (0,.33) circle (0.8pt);
\fill[green] (3.3,.33) circle (0.8pt);
\node[] at (1.65,.47) {$I_i^-$}; 

\draw[green] (0.4, 1.4)--(0.4,1.07)--(3.7,1.07)--(3.7,1.6);
\draw[green, densely dashed] (0.249, .87)--(0.4,.87)--(0.4,1.07);
\fill[green] (0.4,1.07) circle (0.8pt);
\fill[green] (3.7,1.07) circle (0.8pt);
\node[] at (1.95,1.2) {$I_i^+$}; 

\end{tikzpicture}
\end{center}
\caption{Left: $N_{A,B,C}$ is always well-defined for $C$ small enough. Such a representation of $N_{A,B,C}$ always exist by Lemma \ref{lem:cyclic}. Right: the intervals $I_i^+$ and $I_i^-$ in $N_{A,B,C}$ defining the Interval Exchange Transformation $\varphi_{A,B}$.}
\label{fig:IET}
\end{figure}

As above, define $N^\pm_{A,B} = \lim_{C\to 0^\pm} N_{A,B,C}$ when this limit is well-defined.
In order to describe these limits, let us define an IET in the following way.
Consider the union of intervals $I = \sqcup_i I_i$ where $I_i = [0, l_i]$. 
For each cylinder $\Cyl_i$, we write $I_i^-$ for the horizontal segment $I_i^-$ of length $l_i$ starting $\IM(\dev(N_{A,B,C},\alpha_i))/3$ vertically above the leftmost vertex of $a_1^i$, and $I_i^+$ for the horizontal segment starting $\IM(\dev(N_{A,B,C},\alpha_i))/3$ vertically below the leftmost vertex of $b_1^i$.
See Figure \ref{fig:IET} on the right, in green.
In order to compute $N^+_{A,B}$, we define an Interval Exchange Transformation $\varphi_{A,B}$ on $I$ by the first-visit map (following downwards vertical geodesics) from the union of $I_i^-$ to the union of $I_i^+$, both identified with $I$ by identifying $I_i^{\pm}$ and $I_i$ isometrically.
Note that $\varphi_{A,B}$ only depends on the sign $C$ when $C$ is small enough.
Under $\varphi_{A,B}$, each $I_i^-$ (resp. $I_i^+$) is subdivided in subintervals ${a'}_j^i$ (resp. ${b'}_j^i$), $j$ running from left to right, and $\varphi_{A,B}$ acts as a permutation of these subintervals.
One can do a similar construction for the case of $N^-_{A,B}$, yielding an IET $\psi$ that is in general different from $\varphi$.

Before we investigate the existence of the limits $N^\pm_{A,B}$ let us define the sets $\Xi_{A,B,C}$, $\Gamma_{A,B,C}$ and the notion of ``edge paths around a loop''. 

We also define $\TM_{A,B,C}$ (resp. $\TN_{A,B,C}$) to be the surface obtained from $M_{A,B,C}$ (resp. $N_{A,B,C}$) by removing the mid-height segment of each horizontal cylinder $\Cyl_i$ of $M_{A,B,C}$ (resp. $N_{A,B,C}$). These segments are drawn in dashed red in Figures \ref{fig:C_i} and \ref{fig:IET} on the right. 
We will call $\Cyl^+_i$ (resp. $\Cyl^-_i$) the upper (resp. lower) component of $\Cyl_i$ with the mid-height segment removed.

Define now $\Xi_{A,B,C}$ as the set of vertical segments $\gamma: [0, 1] \to \TN_{A,B,C}$ that start and end in $\Sigma$, and are one-to-one from $\gamma^{-1}(\TN_{A,B,C}\setminus \Sigma)$ to its image. 
Otherwise put, it is the set of vertical saddle connections in $\TN_{A,B,C}$ together with concatenations of distinct vertical saddle connections $\gamma_1,\ldots, \gamma_k$ such that $\gamma_i(1) = \gamma_{i+1}(0)$.
We start by defining $\Gamma_{A,B,C}$ to be the set of one-to-one vertical loops $\gamma: S^1 \to \TN_{A,B,T}\setminus \Sigma$.
Note that each $\gamma\in \Gamma_{A,B,C}$ defines a horizontal cylinder bounded by end-circles that are elements of $\Xi_{A,B,C}$. 
In particular, $\Gamma_{A,B,C} \neq \varnothing \implies \Xi_{A,B,T} \neq \varnothing$.

We now define ``edge paths around a loop'', a tool that will be used in the proofs of Proposition \ref{prop:lim_descr}, Lemma \ref{lem:Aplus} and Lemma \ref{lem:nonzero}. 
For each cylinder $\Cyl_i^\pm$, write $B_i^\pm$ for the part of the boundary of $\Cyl_i^\pm$ that only consists of edges $a_k^i$ (resp. $b_k^i$).
For any $\gamma\in \Xi_{A,B,C}$, we define the edge path $\overline{\gamma}$ around $\gamma$ in the following way. 
If $\gamma$ is in $B_i^\pm$, then set $\overline{\gamma} = \gamma$. 
Else, let $x\in (0, 1]$ be the smallest such that $\gamma(x) \in \Sigma \cup B_i^\pm$.
Then $\gamma([0,x])$ separates $\Cyl_i$ in two connected components, one with the mid-height waist of $\Cyl_i^\pm$ on its boundary, and the other bordered by a path in $B_i^\pm$.
Define $\overline{\gamma}$ on $[0, x]$ to be this latter path. 
Then repeat this procedure starting at $x$, and so on until you reach 1. See Figure \ref{fig:edgepath} for an explicit example.
Note that $\overline{\gamma}$ is the concatenation of edges $e_j = a_k^i$ (resp. $b_k^i$), and that $\gamma$ and $\overline{\gamma}$ define the same element of $H_1(\TN_{A,B,C}, \Sigma, \ZZ)$.
Furthermore we have the following equality of vectors in $\RR^2$: 
\[
\dev(N_{A,B,C}, \gamma) = \sum_j \dev(N_{A,B,C}, e_j).
\]

\begin{prop}
Let $I_i$, $\varphi$ and ${a'}_j^i$, ${b'}_j^i$ defined as above. Then the limit $N^+_{A,B}$ exists and is described as the surface with cylinders ${\Cyl'}_i$ as in Figure \ref{fig:IET'} and with gluings defined by $\varphi_{A,B}$.
Specifically, ${\Cyl'}_i$ is a paralellogram with bottom segment $I_i^-$ (resp. top segment $I_i^+$) subdivided in ${a'}_j^i$ (resp. ${b'}_j^i$) and identified via $\varphi$, and with side segments $\alpha_i$ with $\dev(N^+_{A,B},\alpha_i) = \dev(M_{A,B,0}) +  v(\alpha_i)$. 
The same result hold for $N^-_{A,B}$ and $\psi$, with the appropriate relabeling of the edges $a^i_l$ and $b^i_l$.
\label{prop:lim_descr}
\end{prop}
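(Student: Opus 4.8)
By symmetry it suffices to treat the limit $C\to 0^+$; the case $C\to 0^-$ follows verbatim once the relabelling of Lemma \ref{lem:cyclic} is applied with the opposite sign, which replaces the interval exchange $\varphi_{A,B}$ by $\psi$. So fix $A>0$ and $B$, pick any sequence $C_n\downarrow 0$, and write $N^{\sharp}$ for the surface of Figure \ref{fig:IET'}, i.e. the parallelograms ${\Cyl'}_i$ glued along $\varphi_{A,B}$. The plan is to produce, for $n$ large, decreasing neighborhoods $U_n$ of the singular set of $N^{\sharp}$ together with maps $g_n: N^{\sharp}\setminus U_n\to N_{A,B,C_n}$ satisfying the two conditions in the definition of Mirzakhani--Wright convergence from Section \ref{sec:MirWri}; this establishes simultaneously that $N^+_{A,B}=\lim_{C\to 0^+}N_{A,B,C}$ exists and that it equals $N^{\sharp}$.

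First I would start from the polygonal model of $N_{A,B,C}$ furnished by Proposition \ref{prop:normal} (Figure \ref{fig:IET}, left), in which each $\Cyl_i$ appears as a central parallelogram of core vector $l_i(A,C)$ carrying the $a^i_\bullet$-zigzag along its lower side and the $b^i_\bullet$-zigzag along its upper side; by Lemma \ref{lem:cyclic} the partial sums $v(a^i_1+\dots+a^i_j)$ and $v(b^i_1+\dots+b^i_j)$ keep a constant sign, so each zigzag stays on one side of the straight core segment, the thin region it bounds — which I will call a \emph{sliver} — is embedded, and its vertical extent is $O(C)$. I would then cut $N_{A,B,C}$ along the horizontal segments $\bigcup_i I_i^{\pm}$ and along the downward vertical geodesic segments issued from $\bigcup_i I_i^-$ until their first return to $\bigcup_i I_i^+$; this first-return map is by definition $\varphi_{A,B}$, which for $C$ small is the locally constant IET described before the statement, so the combinatorics of this cutting does not depend on $n$. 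The result exhibits $N_{A,B,C_n}$ as $\left(\bigsqcup_i S_i(C_n)\sqcup\bigsqcup_i \mathcal{F}_i(C_n)\right)/\!\sim$, where $S_i(C_n)$ is the piece of $\Cyl_i$ lying between $I_i^-$ and $I_i^+$, and each flow box $\mathcal{F}_i(C_n)$ is assembled from collars of the $S_j(C_n)$ glued across the slivers; as $n\to\infty$ the collars converge while the slivers, of vertical extent $O(C_n)$, collapse.

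Now I would let $n\to\infty$. The converging pieces — the $S_i(C_n)$ together with the collars contributed by the $\mathcal{F}_j(C_n)$ — reassemble into the parallelograms ${\Cyl'}_i$ of Figure \ref{fig:IET'} glued along $\varphi_{A,B}$, that is, into $N^{\sharp}$: the horizontal sides of ${\Cyl'}_i$ are $I_i^-$ and $I_i^+$, subdivided by $\varphi_{A,B}$ into the ${a'}^i_j$ and the ${b'}^i_j$, while its slanted sides have development $\dev(M_{A,B,C_n},\alpha_i)+v(\alpha_i)\to\dev(M_{A,B,0},\alpha_i)+v(\alpha_i)$, which is precisely the datum of Figure \ref{fig:IET'}. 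Taking $U_n$ to be a small neighborhood of $\Sigma$ in $N^{\sharp}$ whose complement maps under the obvious $g_n$ onto the union of the $S_i(C_n)$ and these collars, the first condition holds because developments of edges and diagonals of the converging pieces converge, and the second because every point outside the image of $g_n$ lies in the $O(C_n)$-thin slivers. To check that the boundary identifications of the ${\Cyl'}_i$ are exactly those prescribed by $\varphi_{A,B}$, and to read off the developments of the new edges, I would use the edge-path machinery: for the vertical arcs $\gamma\in\Xi_{A,B,C_n}$ bounding the $S_i$ and $\mathcal{F}_i$, the edge path $\overline{\gamma}$ runs along the $a^i_\bullet$/$b^i_\bullet$ boundary, represents the same class in $H_1(\TN_{A,B,C_n},\Sigma,\ZZ)$, and satisfies $\dev(N_{A,B,C_n},\gamma)=\sum_j\dev(N_{A,B,C_n},e_j)$; letting $n\to\infty$ and using that the slivers collapse shows the gluing degenerates exactly onto $\varphi_{A,B}$ and yields the claimed developments. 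Finally, whether $N^{\sharp}$ leaves the stratum, and whether the degeneration is of a saddle connection (as in Lemma \ref{lem:ex2}) or of a whole cylinder (as in Lemmas \ref{lem:ex1} and \ref{lem:ex3}), is read off from whether $\Gamma_{A,B,C_n}=\varnothing$, i.e. from the existence of a one-to-one vertical loop in $\TN_{A,B,C_n}$ — the input needed in Lemmas \ref{lem:Aplus} and \ref{lem:nonzero}.

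I expect the technical heart to be the passage carried out in the second and third paragraphs: one must show that it is \emph{only} the $O(C_n)$-thin zigzag slivers (together, in the degenerate cases, with the vertical loops detected by $\Gamma_{A,B,C_n}$) that collapse, that every core $S_i(C_n)$ and the way its horizontal boundary is subdivided by $\varphi_{A,B}$ stabilize for all $n$ large, and that the various cuttings and regluings fit together into the single closed surface $N^{\sharp}$ independently of the sequence $C_n$. This is a finite but delicate bookkeeping on the model of Proposition \ref{prop:normal}, relying on Lemma \ref{lem:cyclic} to keep the slivers embedded and on the choice of the cut heights $I_i^{\pm}$ at one third of $\IM(\dev(N_{A,B,C_n},\alpha_i))$ to separate the converging cores from the vanishing material; it is the general-Veech-surface analogue of the explicit cut-and-paste performed for the Wollmilchsau in the proof of Lemma \ref{lem:thm1part2}.
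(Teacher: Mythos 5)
Your cut-and-paste scheme is essentially the one the paper uses (cut along $I_i^\pm$ and along the vertical slits issuing from the singular points of $I$, let the $O(C)$-thin pieces collapse, reassemble the rest into the ${\Cyl'}_i$ glued by $\varphi_{A,B}$), and your use of edge paths to read off developments matches the paper as well. But there is a genuine gap in the passage you correctly flag as ``the technical heart,'' and it is not a bookkeeping gap.

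Your decomposition $N_{A,B,C_n}=\bigl(\bigsqcup_i S_i(C_n)\sqcup\bigsqcup_i\mathcal{F}_i(C_n)\bigr)/\!\sim$ implicitly assumes that all the cut material reattaches as collars of the $S_j$. That is exactly what can fail: after cutting, some pieces cannot be glued below any $\Cyl_i$ at all. You acknowledge these ``orphan'' pieces in a parenthetical via $\Gamma_{A,B,C_n}$, but you never argue what they are, nor why they vanish in the Mirzakhani--Wright limit. The paper's proof treats precisely this case and identifies it as the place where the Veech hypothesis on $M$ is used ``in the most critical way'': by the Veech dichotomy, the orphan material must decompose into cylinders bounded by vertical saddle connections, and the core loops of those cylinders lie in $\Gamma_{A,B,C}$; the edge-path identity $\dev(N_{A,B,C},\gamma)=\sum_j\dev(N_{A,B,C},e_j)$ then forces $\dev(\gamma)=i\lambda C$ for a positive integer $\lambda$, so every point of the orphan material has injectivity radius $O(C)\to 0$, putting us in the situation of Lemma~\ref{lem:ex2}. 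Without that argument, for a non-Veech $M$ the vertical foliation in the cut-open surface could have a minimal component, in which case the orphan material would neither collapse nor reassemble into the ${\Cyl'}_i$, and the stated description of the limit would be false. Your proposal never invokes Veech dichotomy, so this step is missing rather than merely deferred; a related symptom is that you also do not justify that the first-return map defining $\varphi_{A,B}$ is total, which again relies on the same dichotomy to exclude vertical geodesics that never return to $\bigcup_i I_i^+$.

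Once this is added, the rest of your argument aligns with the paper's: the non-degenerate case (no vertical saddle connection built from the slits) keeps $N^+_{A,B}$ in the stratum of $M$ as in Lemma~\ref{lem:thm1part2}; a vertical saddle connection made of slits shrinks as in Lemma~\ref{lem:ex1}; and the $C<0$ case follows from the relabelling of Lemma~\ref{lem:cyclic} with $\psi$ replacing $\varphi$.
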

\begin{proof}

Fix $C>0$ and cut downward vertical slits on $\Cyl_i$, starting at points $I^-$ corresponding to singular points of $I$, and ending at points of $I^+$ (these also correspond to singular points of $I$ by definition of $I^\pm$). 
On Figure \ref{fig:convex_quad}, these are the slits $\alpha_1$, $\alpha_2$ and $\alpha_3$.
Then reglue the pieces of vertical strips on top of each other as in Figures \ref{fig:omega_s_plus_tv_test} to \ref{fig:convex_quad}.
As in the right-most picture of Figure \ref{fig:convex_quad}, we now let $C \to 0$.
As we saw in the case of $M =  M_{\WMS}$ and $v=v_0$, $N_{\frac12, 0, C}$ presents a horizontal cylinder that is not glued back to any of the $\Cyl_i$. First assume that all the pieces we cut are glued to some cylinder $\Cyl_i$. 
If there are no vertical saddle connection that is a union of the vertical slits we cut, then we see as in Lemma \ref{lem:thm1part2} that the limit is in the same stratum as $M$. 
Else, then a vertical saddle connection gets shrunk to a point, and the limit is the one you expect. This is similar to Lemma \ref{lem:ex1}.
In order to complete the proof, we need to investigate what happens in the case that some pieces we cut cannot be glued below any of the cylinders $\Cyl_i$. This is where the fact that $M$ is Veech is used in the most critical way. Indeed, by Veech dichotomy these pieces form an union of horizontal cylinders $\Cyl$. Each of the mid-loops of these cylinders is an element $\gamma$ of $\Gamma_{A,B,C}$. By looking at the edge path $\overline{\gamma}$ around $\gamma$, we see that $\dev(N_{A,B,C},\gamma)=i\lambda C$ for $\lambda$ some positive integer. Thus as $C\to 0$ the injectivity radii of elements of each cylinder $\Cyl$ tend to 0 and we are in a situation very similar to Lemma \ref{lem:ex2}: $\Cyl$ does not contribute to the limit of $N_{A,B,C}$ in the sense of Mirzakhani-Wright. This completes the proof.
\end{proof}

\begin{figure}
\begin{center}
\begin{tikzpicture}[scale=2.1]
\draw[green,middlearrow={>}] (0.4, 1.4)--(1.4, 1.4);
  \fill (.4,1.4) circle (.8pt);
\draw[green,middlearrow={>}] (1.4, 1.4)--(1.8, 1.4);
  \fill (1.4,1.4) circle (.8pt);
\draw[green,middlearrow={>}] (1.8, 1.4)--(2.6, 1.4);
  \fill (1.8,1.4) circle (.8pt);
\draw[green,middlearrow={>}] (2.6, 1.4)--(3, 1.4);
  \fill (2.6,1.4) circle (.8pt);
\draw[green,middlearrow={>}] (3, 1.4)--(3.7, 1.4);
  \fill (3,1.4) circle (.8pt);
  \fill (3.7,1.4) circle (.8pt);

\draw[green,middlearrow={>}] (0, 0)--(1, 0);
  \fill (0,0) circle (.8pt);
\draw[green,middlearrow={>}] (1, 0)--(1.4, 0);
  \fill (1,0) circle (.8pt);
\draw[green,middlearrow={>}] (1.4, 0)--(1.8, 0);
  \fill (1.4,0) circle (.8pt);
\draw[green,middlearrow={>}] (1.8, 0)--(3.3, 0);
  \fill (1.8,0) circle (.8pt);
  \fill (3.3,0) circle (.8pt);

\draw[middlearrow={>}] (0, 0)--(0.4, 1.4);
\draw[middlearrow={>}] (3.3, 0)--(3.7, 1.4);

\node at (0,.7) {$\alpha_i$};
\node at (3.7,.7) {$\alpha_i$};

\node at (0.9,1.6) {${b'}^i_1$};
\node at (1.6,1.6) {${b'}^i_2$};
\node at (2.05,1.6) {${b'}^i_3$};
\node at (2.75,1.6) {${b'}^i_4$};
\node at (3.35,1.6) {${b'}^i_5$};

\node at (.5,-.2) {${a'}^i_1$};
\node at (1.2,-.2) {${a'}^i_2$};
\node at (1.6,-.2) {${a'}^i_3$};
\node at (2.55,-.2) {${a'}^i_4$};

\draw[red, densely dashed] (.2,.7)--(3.5,.7);
\fill[red] (0.2,.7) circle (0.8pt);
\fill[red] (3.5,.7) circle (0.8pt);

\node[] at (1.65,.22) {$I_i^-$}; 

\node[] at (1.95,1.2) {$I_i^+$}; 

\end{tikzpicture}
\end{center}
\caption{The cylinder ${\Cyl'}_i$ with gluings. We use the IET $I$ to describe the limit $N^+_{A,B}$.}
\label{fig:IET'}
\end{figure}
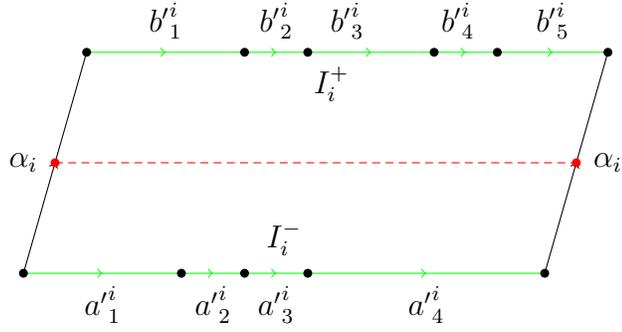


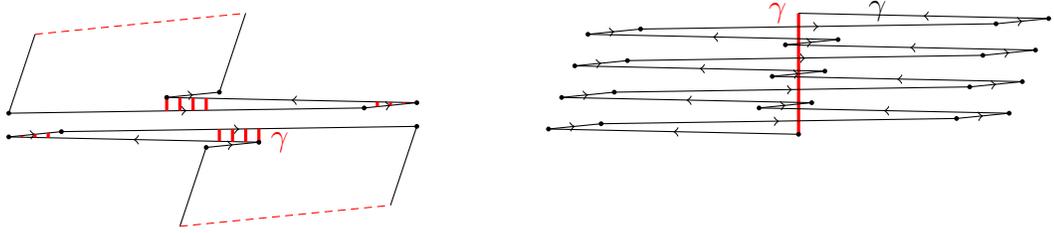
\begin{figure}
\begin{center}
\begin{tikzpicture}[scale=.7]
\draw[middlearrow={>}] (0, 0)--(6.75,.1);
  \fill (0,0) circle (1.3pt);
\draw[middlearrow={>}] (6.75, 0.1)--(7.75, 0.2);
  \fill (6.75, 0.1) circle (1.3pt);
\draw[middlearrow={>}] (7.75, 0.2)--(3, 0.3);
  \fill (7.75, 0.2) circle (1.3pt);
\draw[middlearrow={>}] (3, 0.3)--(4, 0.4);
  \fill (3, 0.3) circle (1.3pt);
  \fill (4,.4) circle (1.3pt);

\draw[-] (0, 0)--(.5, 1.5);
\draw[-] (4, 0.4)--(4.5, 1.9);
\draw[red, densely dashed] (0.5,1.5)--(4.5,1.9);

\draw[middlearrow={>}] (3.75, -.65)--(4.75,-.55);
  \fill (3.75, -.65) circle (1.3pt);
\draw[middlearrow={>}] (4.75, -.55)--(0, -.45);
  \fill (4.75, -.55) circle (1.3pt);
\draw[middlearrow={>}] (0, -.45)--(1, -.35);
  \fill (0, -.45) circle (1.3pt);
\draw[middlearrow={>}] (1, -.35)--(7.75, -.25);
  \fill (1, -.35) circle (1.3pt);
  \fill (7.75, -.25) circle (1.3pt);

\draw[-] (3.25, -2.15)--(3.75, -.65);
\draw[-] (7.25, -1.75)--(7.75, -.25);
\draw[red, densely dashed] (3.25, -2.15)--(7.25, -1.75);
\draw[red, very thick] (3,3/67.5)--(3,.3);
\draw[red, very thick] (3.25,3.25/67.5)--(3.25,.294736842);
\draw[red, very thick] (3.5,3.5/67.5)--(3.5,.289473684);
\draw[red, very thick] (3.75,3.75/67.5)--(3.75,.284210526);

\draw[red, very thick] (7,.125)--(7,.215789474);
\draw[red, very thick] (7.25,.15)--(7.25,.210526316);
\draw[red, very thick] (7.5,.175)--(7.5,.205263158);

\draw[red, very thick] (.25,-.45526)--(.25,-.425);
\draw[red, very thick] (.5,-.460526)--(.5,-.4);
\draw[red, very thick] (.75,-.465789)--(.75,-.375);

\draw[red, very thick] (4,-.534211)--(4,-.305555);
\draw[red, very thick] (4.25,-.539474)--(4.25,-.301852);
\draw[red, very thick] (4.5,-.544737)--(4.5,-.298148);
\draw[red, very thick] (4.75,-.55)--(4.75,-.294444);

\fill[] (3, .3) circle (1.2pt);
\fill[] (4.75, -.55) circle (1.2pt);

\node[red, right, very thick] at (4.75, -.55) {$\gamma$};
\draw[red, very thick] (15, -.4)--(15, 1.9);
\node[red, left, very thick] at (15, 1.9) {$\gamma$};
\node[very thick] at (16.5, 2) {$\overline{\gamma}$};

\draw[middlearrow={>}] (15, -.4)--(10.25, -.3);
  \fill (15, -.4) circle (1.3pt);
\draw[middlearrow={>}] (10.25, -.3)--(11.25, -.2);
  \fill (10.25, -.3) circle (1.3pt);
\draw[middlearrow={>}] (11.25, -.2)--(18, -.1);
  \fill (11.25, -.2) circle (1.3pt);
\draw[middlearrow={>}] (18, -.1)--(19, 0);
  \fill (18, -.1) circle (1.3pt);
\draw[middlearrow={>}] (19, 0)--(14.25, .1);
  \fill (19, 0) circle (1.3pt);
\draw[middlearrow={>}] (14.25, .1)--(15.25, .2);
  \fill (14.25, .1) circle (1.3pt);

\draw[middlearrow={>}] (15.25, .2)--(10.5, .3);
  \fill (15.25, .2) circle (1.3pt);
\draw[middlearrow={>}] (10.5, .3)--(11.5, .4);
  \fill (10.5, .3) circle (1.3pt);
\draw[middlearrow={>}] (11.5, .4)--(18.25, .5);
  \fill (11.5, .4) circle (1.3pt);
\draw[middlearrow={>}] (18.25, .5)--(19.25, .6);
  \fill (18.25, .5) circle (1.3pt);
\draw[middlearrow={>}] (19.25, .6)--(14.5, .7);
  \fill (19.25, .6) circle (1.3pt);
\draw[middlearrow={>}] (14.5, .7)--(15.5, .8);
  \fill (14.5, .7) circle (1.3pt);

\draw[middlearrow={>}] (15.5, .8)--(10.75, .9);
  \fill (15.5, .8) circle (1.3pt);
\draw[middlearrow={>}] (10.75, .9)--(11.75, 1);
  \fill (10.75, .9) circle (1.3pt);
\draw[middlearrow={>}] (11.75, 1)--(18.5, 1.1);
  \fill (11.75, 1) circle (1.3pt);
\draw[middlearrow={>}] (18.5, 1.1)--(19.5, 1.2);
  \fill (18.5, 1.1) circle (1.3pt);
\draw[middlearrow={>}] (19.5, 1.2)--(14.75, 1.3);
  \fill (19.5, 1.2) circle (1.3pt);
\draw[middlearrow={>}] (14.75, 1.3)--(15.75, 1.4);
  \fill (14.75, 1.3) circle (1.3pt);

\draw[middlearrow={>}] (15.75, 1.4)--(11, 1.5);
  \fill (15.75, 1.4) circle (1.3pt);
\draw[middlearrow={>}] (11, 1.5)--(12, 1.6);
  \fill (11, 1.5) circle (1.3pt);
\draw[middlearrow={>}] (12, 1.6)--(18.75, 1.7);
  \fill (12, 1.6) circle (1.3pt);
\draw[middlearrow={>}] (18.75, 1.7)--(19.75, 1.8);
  \fill (18.75, 1.7) circle (1.3pt);
\draw[middlearrow={>}] (19.75, 1.8)--(15, 1.9);
  \fill (19.75, 1.8) circle (1.3pt);

\end{tikzpicture}
\end{center}
\caption{$N_{A,B,C}$ for $M=M_{\WMS}$ and $v= v_0$ as described in the previous section, and with $A$ close to $\frac{1}{3}$. We drew some $\gamma\in \Xi_{A,B,C}$ in red. On the right we drew both $\gamma$ and $\overline{\gamma}$.}
\label{fig:edgepath}
\end{figure}

We say that there is an accident at the positive number $A$ if the limit $N^+_{A,B}$ (resp. $N^-_{A,B}$) in the sense of Mirzakhani-Wright is in a different stratum as $M$.
Let $\Acc_+$ (resp. $\Acc_-$) be the set of $A$s such that $N^+_{A,B}$ (resp. $N^-_{A,B}$) is not in the same stratum as $M$, and let $\Acc = \Acc_- \cup \Acc_+$.
The letter $\Acc$ stands for ``accident''.
We are now going to describe a criterion to find the real numbers $A$ that are accumulation points in $\Acc \subset \RR^{>0}$.
We will see in Lemma \ref{lem:Aplus} and Theorem \ref{thm:Acc} that $\Xi_{A,B,C}$ and $\Gamma_{A,B,C}$ are both related to accidents at $A$.

\begin{lem}
\label{lem:Aplus}
The following hold:
\begin{enumerate}
\item $\Acc_+ = \Acc_- = \Acc$,
\item $\Xi_{A,B,C} = \Xi_{A,B,C'}$ for all $C, C' \neq 0$ small enough and we write simply $\Xi_{A,B}$,
\item $t\in \Acc$ if and only if $\Xi_{A,B} \neq \varnothing$.
\end{enumerate}
\end{lem}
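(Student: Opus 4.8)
The plan is to prove (ii) first, derive (iii) from it using the analysis already carried out in the proof of Proposition~\ref{prop:lim_descr}, and then read off (i). For (ii), the key point is that the shear parameter $C$ enters $M_{A,B,C}$ only through the imaginary coordinate: one checks directly that $M_{A,B,C'}=g\,M_{A,B,C}$ with $g=\bigl(\begin{smallmatrix}1&0\\(C'-C)/A&1\end{smallmatrix}\bigr)$ a lower--triangular unipotent matrix, which fixes every vertical line and hence preserves the vertical foliation together with all its saddle connections. Pushing by $v$ does not commute with $g$, but the discrepancy $\dev(N_{A,B,C'},\cdot)-g\,\dev(N_{A,B,C},\cdot)$ is a purely imaginary cohomology class (a real multiple of $v$), and adding such a class changes only the heights of the singular points, not their horizontal positions, so it does not affect which singular points are joined by a vertical geodesic in $\TN_{A,B,C}$. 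I would make this rigorous by working throughout with the horizontal holonomies $\RE\dev(N_{A,B,C},e)$ of the edges of the polygonal decomposition of Proposition~\ref{prop:normal}, which are independent of $C$: they determine the first--return IET $\varphi_{A,B}$ (for $C>0$) resp.\ $\psi_{A,B}$ (for $C<0$), and via the edge--path description they determine the set of vertical saddle connections; since $\varphi_{A,B}$ depends only on the sign of $C$ for $C$ small, so does $\Xi_{A,B,C}$, and since those real parts are literally identical for $C$ and $-C$, the set is in fact the same — under the canonical combinatorial identification of the polygonal models — for all small $C\neq0$. Write $\Xi_{A,B}$ for it.

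For (iii), I argue both implications. For the ``if'' direction, take $\gamma\in\Xi_{A,B}$ and apply the edge--path construction of Chapter~\ref{chap:generalCase}: since $\gamma$ is a vertical geodesic, its edge path $\overline\gamma$ consists of $a$-- or $b$--edges, all traversed in the same vertical direction, so $\dev(N_{A,B,C},\gamma)=\sum_j\dev(N_{A,B,C},e_j)$ has zero real part, while its imaginary part equals $\mu C$ for some $\mu\neq0$ (each $\IM\dev(N_{A,B,C},e_j)$ is $C$ times a positive number and the signs do not cancel). Hence $\gamma$ — or one of its constituent saddle connections — shrinks to a point as $C\to0$, and by Lemmas~\ref{lem:ex1} and~\ref{lem:ex2} the Mirzakhani--Wright limit $N^\pm_{A,B}$ then lies in a stratum different from that of $M$, i.e.\ $A\in\Acc$. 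For the ``only if'' direction I invoke the trichotomy established in the proof of Proposition~\ref{prop:lim_descr}: the limit leaves the stratum of $M$ exactly when either a concatenation of the vertical slits cut from the cylinders $\Cyl_i$ is a vertical saddle connection — and such a concatenation lies in $\TN_{A,B,C}$, so $\Xi_{A,B}\neq\varnothing$ — or the left--over pieces assemble (using that $M$ is Veech) into extra horizontal cylinders, whose mid--height loops lie in $\Gamma_{A,B,C}$; but $\Gamma_{A,B,C}\neq\varnothing$ forces $\Xi_{A,B,C}\neq\varnothing$, each such loop being bounded by end--circles in $\Xi_{A,B,C}$. In either case $\Xi_{A,B}\neq\varnothing$.

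Finally, for (i): applying (iii) with $C>0$ and with $C<0$ identifies $\Acc_+$ and $\Acc_-$ with the sets of $A$ for which $\TN_{A,B,C}$ has a vertical saddle connection for small $C$ of the respective sign; but, as in the proof of (ii), the existence of such a saddle connection is a condition on the horizontal holonomies, which coincide for $C$ and $-C$, so $\Acc_+=\Acc_-$, and by definition $\Acc=\Acc_+\cup\Acc_-$ equals both (this also confirms $\Xi^+_{A,B}=\Xi^-_{A,B}$, making the notation $\Xi_{A,B}$ in (ii) consistent across signs). The main obstacle lies entirely inside (ii): one must be sure that no vertical saddle connection is created or destroyed as $C$ ranges over a small one--sided interval — equivalently, that the combinatorial type of the vertical foliation (and of the IET $\varphi_{A,B}$) is genuinely locally constant there — and reducing everything to the $C$--independent horizontal holonomies is the cleanest way to secure this; once that is in place, both directions of (iii) and all of (i) are bookkeeping on top of Proposition~\ref{prop:lim_descr}.
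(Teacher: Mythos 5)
Your proof is correct and follows essentially the same route as the paper: constancy in $C$ of the horizontal holonomies via the edge-path construction establishes (ii), the pinching of a $\gamma\in\Xi_{A,B}$ with $\dev(N_{A,B,C},\gamma)=i\lambda C$ together with the trichotomy from the proof of Proposition~\ref{prop:lim_descr} gives (iii), and (i) is bookkeeping from the signed version of (iii) plus (ii). Your opening observation that $M_{A,B,C'}=g\,M_{A,B,C}$ for a lower-triangular unipotent $g$ (so that the discrepancy after pushing is a purely imaginary, rel-type class — a real multiple of $iv$, not of $v$ as you wrote) is a nice complementary framing, but the load-bearing argument you then fall back on — reducing everything to the $C$-independent real parts $\RE\dev(N_{A,B,C},e)$ of the polygonal edges — is exactly the paper's.
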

\begin{proof}
This proof is similar to the one of Proposition \ref{prop:lim_descr}
We first prove (ii) when $C$ and $C'$ have the same sign. 
Let $\gamma \in \Xi_{A,B,C}$ for some $C\neq 0$ (say $\gamma$ travels upwards), and consider $\overline{\gamma}$ the edge path around $\gamma$. 
Since $\overline{\gamma}$ is made of edges $e_k^i$ that correspond to horizontal edges of $M$, then for $A$ fixed $\RE(\dev(N_{A,B,C}, \gamma)) = \RE(\dev(N_{A,B,C}, \overline{\gamma}))$ is a constant function of $C$ as long as $C$ is small enough. 
In particular, if it is 0 for some $C$ small enough (i.e. $\gamma$ is vertical) then it is 0 for all $C'\neq 0$ sufficiently close to 0 and of same sign, and the proof of (ii) follows by symmetry.

We now prove (iii).
If $\Xi_{A,B} \neq \varnothing$, then we can find $\gamma\in \Xi_{A,B,C}$ travelling upwards. 
Again considering the edge path around $\gamma$ we see that $\dev(N_{A,B,C}, \gamma)$ is equal to $i \lambda C\in i\RR\subset \CC$ with $\lambda >0$. 
As $C\to 0$, $\gamma$ is pinched to a point thus $N_{A,B,C}$ does not have a limit in the same stratum as $M$, thus $t\in \Acc$.
If $\Xi_{A,B}=\varnothing$, then replicating the proof of Proposition \ref{prop:lim_descr} we have that the limits $N_{A,B}^\pm$ exist and are in the same stratum as $M$, thus $t\notin \Acc$, and the proof of (iii) is complete.

Note that minor changes in the proof of (iii) imply that $t\in \Acc_+$ if and only if $\Xi_{A,B,C} \neq \varnothing$ for some small $C>0$ and $t\in \Acc_-$ if and only if $\Xi_{A,B,C} \neq \varnothing$ for some small $C<0$. 
This together with (ii) imply (i).
\end{proof}

We would like to describe the topology of the set $\Acc$. 
More precisely, we want to be able to predict when an accident $A\in \Acc$ is an accumulation point. 
We start with the following Lemma that will be useful in the proof of Theorem \ref{thm:Acc}.
\begin{lem}
\label{lem:nonzero}
Let $\gamma \in \Xi_{A,B,C}$. Then $v(\gamma)\neq 0$.
\end{lem}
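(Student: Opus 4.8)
The plan is to compute the period of $\gamma$ in $N_{A,B,C}$ explicitly and read off $v(\gamma)$ from its real part. First I would pass from $\gamma$ to the edge path $\overline{\gamma}$ around it, as introduced just before Proposition~\ref{prop:lim_descr}. By construction $\overline{\gamma}$ is a concatenation $e_1\cdots e_N$ of oriented edges, each $e_j$ being one of the $a^i_k$ or $b^i_k$ and hence a \emph{horizontal} edge of the cylinder decomposition of $M$; moreover $\gamma$ and $\overline{\gamma}$ define the same class in $H_1(\TN_{A,B,C},\Sigma;\ZZ)$, so that $v(\gamma)=v(\overline{\gamma})=\sum_j v(e_j)$ and $\dev(N_{A,B,C},\gamma)=\sum_j\dev(N_{A,B,C},e_j)$.

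Next I would compute each $\dev(N_{A,B,C},e_j)$ in coordinates. Put $r_j:=\RE(\dev(M,e_j))$; since $e_j$ is horizontal in $M$ we have $\dev(M,e_j)=(r_j,0)$, hence $\dev(M_{A,B,C},e_j)=g\,(r_j,0)^{\top}=(Ar_j,Cr_j)$ where $g=\left(\begin{smallmatrix}A&B\\ C&(1+BC)/A\end{smallmatrix}\right)$. Because $N_{A,B,C}$ is obtained from $M_{A,B,C}$ by pushing along the real vector $v$ for time $1$, the period of $e_j$ changes by the addition of the real number $v(e_j)$, so $\dev(N_{A,B,C},e_j)=\bigl(Ar_j+v(e_j),\ Cr_j\bigr)$. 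Summing over $j$ and setting $x:=\sum_j r_j=\RE(\dev(M,\gamma))$ gives
\[
\dev(N_{A,B,C},\gamma)=\bigl(Ax+v(\gamma),\ Cx\bigr),
\]
which is the same bookkeeping already used in the proof of Lemma~\ref{lem:Aplus}.

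Finally I would invoke verticality: since $\gamma\in\Xi_{A,B,C}$ is a nonconstant vertical geodesic segment of $\TN_{A,B,C}$, its period $\dev(N_{A,B,C},\gamma)$ is a nonzero vector lying in $\{0\}\times\RR$. Comparing with the display, this forces $Ax+v(\gamma)=0$ and $Cx\neq0$. As $C\neq0$ the second relation gives $x\neq0$, and as $A>0$ the first then yields $v(\gamma)=-Ax\neq0$, which is what we want. I do not expect a genuine obstacle here: the substance is entirely the edge-path reduction (already in hand) together with the fact that $v$ is real. The only points needing care are minor: that $\gamma$ really has positive length and is monotone in the vertical direction, so that $\dev(N_{A,B,C},\gamma)$ is genuinely a \emph{nonzero} purely imaginary vector rather than $0$, and that we are in the regime $C\neq0$ in which $\Xi_{A,B,C}$ is defined.
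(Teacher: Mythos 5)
Your proposal is correct and follows essentially the same route as the paper: both proofs pass to the edge path $\overline{\gamma}$, use that the $e_j$ are horizontal edges of $M$ so $\dev(M_{A,B,C},e_j)=(Ar_j,Cr_j)$ with the push adding the real number $v(e_j)$, and then compare with the verticality of $\gamma$. The paper phrases the last step as a contradiction — if $v(\gamma)=0$ then $\dev(N_{A,B,C},\gamma)=\sum_j\dev(M_{A,B,C},e_j)$ would have strictly positive real part since each $r_j>0$ — whereas you solve directly for $v(\gamma)=-Ax$ and deduce $x\neq 0$ from the nonvanishing of the (purely imaginary) period; this is a cosmetic reformulation of the same computation, with the same implicit reliance on $\gamma$ being a genuine (monotone) vertical segment.
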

\begin{proof}
Suppose $v(\gamma) = 0$. Then, since $\gamma$ and $\overline{\gamma}$ are equal in homology, $v(\overline{\gamma}) = 0$ as well.
But then, as in the proof of Lemma \ref{lem:Aplus}, we write $\overline{\gamma}$ as a concatenation of edges $e_j = a_k^i $ for some $i,k$. 
Now, using that $v(\overline{\gamma}) = 0$ for the second equality, the following equalities hold.
\[
\dev(N_{A,B,C}, \gamma) = \dev(N_{A,B,C}, \overline{\gamma}) = \dev(M_{A,B,C}, \overline{\gamma}) = \sum_j \dev(M_{A,B,C}, e_j) 
\]
The sum on the right is a sum of vectors with positive real parts. This contradicts the fact that $\gamma\in \Xi_{A,B,C}$. Hence $v(\gamma) \neq 0$.
\end{proof}

Note that Lemma \ref{lem:Aplus} says that for a Veech surface pushed along a suitable direction, accidents appear at $A$ exactly when there are vertical saddle connections in $\TN_{A,B,C}$. Theorem \ref{thm:Acc} belows implies that such accidents are accumulation points exactly when there are vertical loops in $\TN_{A,B,C}$.
\begin{thm}
\label{thm:Acc}
Suppose $A_0\in \Acc$. Then the following dichotomy hold:
\begin{itemize}
	\item If $\Gamma_{A_0,B}\neq \varnothing$, then there are infinitely many accidents around any neighborhood of $A_0$. More precisely, there is a homeomorphism preserving the natural order on $\RR$ between $\{0\} \cup \bigcup\{\pm \dfrac{1}{n}\}$ and $[A_0-\epsilon, A_0 + \epsilon]\cap \Acc$.
	\item If $\Gamma_{A_0,B}= \varnothing$, then $A_0$ is isolated in $\Acc$. 
\end{itemize}
\end{thm}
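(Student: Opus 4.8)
The plan is to parametrize accident values by edge paths and then analyze, near a given accident $A_0$, which edge paths can become vertical. Recall from the proofs of Lemma \ref{lem:Aplus} and Lemma \ref{lem:nonzero} that if $\gamma\in\Xi_{A,B,C}$ then its edge path $\overline{\gamma}$ is a concatenation of horizontal edges $e_j = a^i_k$ or $b^i_k$ of $M$, and since each such edge satisfies $\dev(M_{A,B,C},e_j) = (A\,\RE(\dev(M,e_j)),\,C\,\RE(\dev(M,e_j)))$, one gets
\[
\RE(\dev(N_{A,B,C},\gamma)) = A\cdot R(\overline{\gamma}) + v(\overline{\gamma}),\qquad R(\overline{\gamma}) := \RE(\dev(M,\overline{\gamma})).
\]
Verticality of $\gamma$ forces the left side to vanish; since $v(\overline{\gamma}) = v(\gamma)\neq 0$ by Lemma \ref{lem:nonzero}, we must have $R(\overline{\gamma})\neq 0$, so the accident value is the number $A(\overline{\gamma}) = -v(\overline{\gamma})/R(\overline{\gamma})$, determined purely by the combinatorics of the edge path. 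By Lemma \ref{lem:Aplus}, $\Acc$ is exactly the set of such values $A(\overline{\gamma})$ over \emph{realizable} edge paths, and there are only finitely many edge paths of any bounded combinatorial length, hence only finitely many accident values coming from bounded-length paths.

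Next I would settle the case $\Gamma_{A_0,B} = \varnothing$. Suppose, for contradiction, that there were accident values $A^{(m)}\to A_0$ with $A^{(m)}\neq A_0$; fix a small $C>0$ (legitimate by Lemma \ref{lem:Aplus}(ii)) and realize $A^{(m)}$ by $\gamma^{(m)}\in\Xi_{A^{(m)},B,C}$ (possible by Lemma \ref{lem:Aplus}(iii)). Since boundedly many edge paths yield boundedly many values of $A$, the combinatorial lengths of the $\overline{\gamma^{(m)}}$, hence the Euclidean lengths of the $\gamma^{(m)}$ (each crossing of a half-cylinder $\Cyl_i^\pm$ contributes at least a fixed minimal height), tend to infinity. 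As $A^{(m)}\to A_0$ the surfaces $N_{A^{(m)},B,C}$ converge to $N_{A_0,B,C}$, so a pigeonhole argument on a fixed vertical transversal shows that these ever longer vertical saddle-connection concatenations, none of which crosses the deleted mid-height waists, accumulate on a recurrent complete vertical geodesic of $\TN_{A_0,B,C}\setminus\Sigma$. Arguing with the Veech dichotomy exactly as in the proof of Proposition \ref{prop:lim_descr} (recurrent non-periodic vertical behaviour is impossible for a suitable completion), this geodesic must be a one-to-one vertical loop, i.e. an element of $\Gamma_{A_0,B}$ — a contradiction. Hence only finitely many accident values lie in a small interval about $A_0$, and shrinking $\epsilon$ makes $A_0$ isolated in $\Acc$.

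Finally, the case $\Gamma_{A_0,B}\neq\varnothing$. Fix $\gamma_\ell\in\Gamma_{A_0,B,C}$; at $A = A_0$ it is the core of a maximal vertical cylinder $\mathcal C$ whose two boundary components are unions of vertical saddle connections (elements of $\Xi_{A_0,B}$), and $A_0 R(\overline{\gamma_\ell}) + v(\overline{\gamma_\ell}) = 0$. For $A$ near $A_0$ the cylinder $\mathcal C$ persists but its core direction rotates off the vertical by an angle comparable to $A - A_0$, so a vertical geodesic entering $\mathcal C$ spirals, winding on the order of $1/|A-A_0|$ times before leaving $\mathcal C$ or meeting $\Sigma$. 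Fixing a boundary singular point $P$ of $\mathcal C$, as $A$ varies the exit point of the downward vertical geodesic issued from $P$ sweeps across the boundary of $\mathcal C$; for a discrete sequence of parameters on each side of $A_0$ it lands on a singular point after winding exactly $n$ times, producing $\gamma_n\in\Xi$ with edge path of the form $\overline{\sigma_{\mathrm{in}}} + n\,\overline{\gamma_\ell} + \overline{\sigma_{\mathrm{out}}}$ for one of finitely many fixed end-pieces, so that
\[
A_n = -\frac{v(\overline{\sigma_{\mathrm{in}}}) + v(\overline{\sigma_{\mathrm{out}}}) + n\,v(\overline{\gamma_\ell})}{R(\overline{\sigma_{\mathrm{in}}}) + R(\overline{\sigma_{\mathrm{out}}}) + n\,R(\overline{\gamma_\ell})}\ \xrightarrow[n\to\infty]{}\ -\frac{v(\overline{\gamma_\ell})}{R(\overline{\gamma_\ell})} = A_0 ,
\]
with $A_n - A_0$ of order $1/n$ and eventually monotone, giving one sequence strictly decreasing to $A_0$ and one strictly increasing to $A_0$. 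Conversely, by the compactness argument of the previous paragraph, any accident close to but distinct from $A_0$ is realized by an edge path too long to avoid the persistent vertical cylinder(s), hence is of this form, except for the finitely many bounded-length accident values which we exclude by shrinking $\epsilon$. Thus $[A_0-\epsilon,A_0+\epsilon]\cap\Acc$ consists of $A_0$ together with two monotone sequences converging to it and accumulating nowhere else, and any such ordered subset of $\RR$ is order-homeomorphic to $\{0\}\cup\bigcup\{\pm\tfrac1n\}$.

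The main obstacle is the geometric control of the vertical flow under small perturbations of $A_0$: making rigorous both the assertion that vertical geodesics wind $\sim 1/|A-A_0|$ times inside the persistent cylinder (including the bookkeeping when the vertical direction at $A_0$ is completely periodic, so that $\mathcal C$ is merely one of several vertical cylinders and longer geodesics may traverse several of them) and the dual assertion that outside these persistent cylinders there are no long vertical saddle connections for $A$ near $A_0$ — so that the accidents produced are exactly the $A_n$ above together with the finitely many bounded-length ones. This is precisely where the Veech dichotomy does the essential work, as in Proposition \ref{prop:lim_descr}.
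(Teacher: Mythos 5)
Your proof follows essentially the same route as the paper's: Lemma \ref{lem:nonzero} together with bounded-length finiteness of close-to-vertical concatenations handles the isolated case, and winding inside the perturbed (formerly vertical) cylinder bounded by paths in $\Xi_{A_0,B}$ generates the two monotone sequences accumulating at $A_0$ for the first case; the extra algebraic parametrization $A(\overline{\gamma})=-v(\overline{\gamma})/R(\overline{\gamma})$ is a genuine refinement that makes the $O(1/n)$ rate and the order type of $\Acc$ near $A_0$ explicit, rather than a different argument. The obstacle you flag at the end --- rigorously controlling the winding count and ruling out long vertical concatenations that avoid the persistent cylinders --- is precisely the step the paper also treats informally by referring back to the explicit $M_{\WMS}$ computation of Section~\ref{sec:WMSpush} and a shrink-$\epsilon$ argument, so you have correctly located where the remaining work lies.
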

\begin{proof}
Proof of the second claim.
For any $L>0$ the set of saddle connections (resp. concatenations of successive saddle connections) $\gamma$ of length less than $L$ and with direction close to vertical is finite. 
By Lemma \ref{lem:nonzero}, $v(\gamma) \neq 0$ for any such saddle connection, so by choosing $\epsilon$ small enough we ensure that any vertical saddle connection in $\Xi_{A,B}$ for $A\neq A_0 \in [A_0-\epsilon, A_0+\epsilon]$ has to be of length at least $L$. Chosing $L$ big enough that an vertical saddle connection (resp. concatenation of successive saddle connections) starting in a half-cylinder $\Cyl_i^\pm$ intersects its mid-waist, we see that $\Xi_{A,B} $ must be empty.

Proof of the first claim. Start with some $\gamma\in \Gamma_{A_0,B}$.
For any $C>0$ small enough, $\gamma$ defines a horizontal cylinder $\widetilde{\Cyl}_{A_0, B, C}$ on $N_{A_0, B, C}$ which is bounded by two paths $\gamma_1, \gamma_2 \in \Xi_{A_0,B}$ homotopic to $\gamma$. 
Lemma \ref{lem:nonzero} implies that $v(\gamma) \neq 0$, thus for $\epsilon$ small and any $A\in [A_0 - \epsilon, A_0 + \epsilon]$ with $A\neq A_0$, the cylinder $\widetilde{\Cyl}_{A, B, C}$ is not horizontal anymore in $N_{A,B,C}$, i.e. $\RE(\dev(N_{A,B,C}, \gamma_i)) \neq 0$.
Therefore, we can define a map $\phi_{A,B,C}: [0,1] \to [0, 1]$ that maps $t\in [0,1]$ to the number $s$ such that the downward vertical line in $N_{A,B,C}$ that starts at $\gamma_1(t)$ intersects for the first time $\gamma_2([0,1])$ at $\gamma_2(s)$. Then there is a vertical saddle connection contained entirely in $\widetilde{\Cyl}_{A,B,C}$ exactly when there is a $t$ such that both $\gamma_1(t)$ and $\gamma_2(\phi_{A,B,C}(t))$ are singular points.
As in the case of $M = M_{\WMS}$ and $v = v_0$ studied in Section \ref{sec:WMSpush}, this happens exactly on an infinite discrete subspace of $[A_0-\epsilon, A_0) \cup (A_0, A_0 + \epsilon]$.
Note that there might be several (but finitely many) horizontal cylinders similar to $\widetilde{\Cyl}_{A_0,B,C}$, and that any other accident can be ruled out by choosing a smaller $\epsilon$ if necessary, as in the proof of the second claim. Therefore the first claim follows.
\end{proof}

%
%

\chapter{Convergence to boundary of strata, in a simple case}
\label{chap:BS}
The Borel-Serre partial compactification of homogeneous spaces was defined in \cite{bs73} and similar constructions in strata have been introduced and used in \cite{emz03}, \cite{bsw15}, \cite{g15} and \cite{c15}. See also \cite{w15BS}, in preparation.

As we saw in the previous section, the ``infinite catastrophes'' phenomenon occurs when a cylinder degenerates, i.e. when its circumference approaches 0. In this section we look back at the example from Chapter \ref{sec:background} and more generally describe the convergence to the a Borel-Serre type boundary of strata, in the simple case where a single ``simple'' cylinder degenerate.
We call simple cylinder a cylinder for which the two outer loops are single saddle connections connecting different singular points. Otherwise put, the outer loops contain exactly one singular point each, and these two singular points are distinct. 
$A$ and $C$ being fixed, let $\Cyl_{A,C}$ be the cylinder described in Figure \ref{fig:convex_quad} (it does not depend on the parameter $B$). Then $\Cyl_{A,C}$ is simple, and infinitely many other examples of simple cylinders can be constructed using for instance \cite{w15}.

Let us now define the three types of shapes of small simple cylinders: infinitesimal cylinders, IETs on infinitesimal slits and infinitely thin cylinders.
\begin{itemize} 
	\item Infinitesimal cylinders are (genuine) cylinders normalized to have area 1. They are parametrized by the direction $\theta\in S^1$ of their outer circles, their modulus $m\in (0, \infty)$ (height divided by circumpherence of outer circles) and their twist, i.e. the translation $t \in S^1$ between the two outer circles. We write $\Bound_1$ for the set of all infinitesimal cylinders.
	\item IETs on infinitesimal slits are IETs on the unit slit of $\RR^2$ centered at 0. Since we are only considering simple cylinders, this IET is just a translation $t\in S^1$. They can be thought of as modulus-0 infinitesimal cylinder, and they are parametrized by the direction $\theta\in S^1$ of the slit, and the translation $t\in S^1$. We write $\Bound_0$ for the set of IETs on infinitesimal slits.
	\item Infinitely thin cylinders can be thought of as modulus-$\infty$ cylinders, and are parametrized by a direction $\theta\in S^1$, a length (or height) $l\in [0, \infty]$ and a twist. We write $\Bound_\infty$ for the set of Infinitely thin cylinders.
\end{itemize}
See Figure \ref{fig:cylinders} for examples of infinitesimal cylinders, IETs on infinitesimal slits and infinitely thin cylinders.
See also Figure \ref{fig:Bound} for how the pieces $\Bound_0$, $\Bound_1$ and $\Bound_\infty$ are glued to each other into the topological space $\Bound$.

Let $S\in \Bound$ be an infinitesimal shape with direction $\theta$, height $h$, modulus $m$ and twist $t$. Let $M_n$ be a sequence of surfaces with simple cylinders $\Cyl_n$ of direction $\theta_n$, height $h_n$, modulus $m_n$, twist $t_n$ and circumpherence going to zero.
We denote by $M'_n$ the surface obtained from $M_n$ by removing $\Cyl_n$ and gluing the two remaining slits together.  
For the scope of this paper, we will say that $M_n$ converges to the pair $(M, S)$ if $M'_n \to M$ in the Mirzakhani-Wright topology, and $(\theta_n, h_n, m_n, t_n)\to (\theta, h, m, t)$.

In order to understand how a subspace of strata can approach a pair $(M, S)$, we define spiraling in the following way. 
This definition is inspired by the previous work \cite{bc16} co-authored with Hyungryul Baik.
\begin{itemize}
	\item We say that a subspace $\Push$ of some stratum approaches a pair $(M,S)$ if there is a sequence $M_n\in \Push$ that converges to the pair $(M,S)$.
	\item We say that a $\Push$ approaches $(M,S)$ in a spiraling way if it approaches $(M,S)$ and if there is no continuous path $t\mapsto M_t \in\Push$ with simple cylinders $\Cyl_t$ that converges to $(M, S)$.
	\item We say that $\Push$ approaches $(M,S)$ in a non-spiraling way if such a path exists.
\end{itemize}

%
	
%

\begin{figure}
\begin{center}
\begin{tikzpicture}[scale=.45]

\draw[-] (-.06, .08)--(3.94, 3.08);
\draw[-] (4.06, 2.92)--(.06, -.08);

\fill (0, 0) circle (.13);
\fill (0.94, .86) circle (.13);
\fill (3.06, 2.17) circle (.13);
\fill (4, 3) circle (.13);
\draw[-,densely dashed,red] (1.34,2.38)--(2.66,.63);
 
\draw[-] (4.7, .4)--(8.7,3.4);
\draw[thick,-] (8.7,3.4)--(9.3,2.6);
\draw[-] (9.3,2.6)--(5.3, -.4);
\draw[thick,-] (5.3, -.4)--(4.7,.4);
\fill[orange] (4.7, .4) circle (.13);
\fill[orange] (8.7, 3.4) circle (.13);
\fill[blue] (6.3, .35) circle (.13);
\draw[-,densely dashed,red] (6.1,2.7)--(7.9,.3);
\draw[-, purple] (9,3.3)--(9, 2.7);
\draw[-, purple] (5,-.3)--(5, .3);

\draw[thick,-] (11, 1.45)--(12.2,-.15);
\draw[-] (12.2,-.15)--(14.2,1.35);
\draw[thick,-] (14.2,1.35)--(13, 2.95);
\draw[-] (13, 2.95)--(11,1.45);
\fill[orange] (11, 1.45) circle (.13);
\fill[orange] (13, 2.95) circle (.13);
\fill[blue] (12.7, .225) circle (.13);
\draw[-,densely dashed,red] (13.8,-.2)--(11.4,3);
\draw[-, purple] (11.6,.35)--(11.6, .95);
\draw[-, purple] (13.6,1.85)--(13.6, 2.45);

\draw[thick,-] (15, 5.35)--(21,-2.65);
\draw[-] (21,-2.65)--(21.4,-2.35);
\draw[thick,-] (21.4,-2.35)--(15.4, 5.65);
\draw[-] (15.4, 5.65)--(15,5.35);
\fill[orange] (15, 5.35) circle (.13);
\fill[orange] (15.4, 5.65) circle (.13);
\fill[blue] (21.1, -2.575) circle (.13);
\draw[-,densely dashed,red] (21.8,-3.2)--(14.6,6.3);
\draw[-, purple] (18,1.05)--(18, 1.65);
\draw[-, purple] (18.4,1.35)--(18.4, 1.95);


\begin{scope} [shift={(-.8,0)}]
\draw[thick] (25.7, 3.8)--(29.3, -1);
\draw[-,densely dashed,red] (25.1, 4.6)--(25.7,3.8);
\draw[-,densely dashed,red] (29.3, -1)--(29.9,-1.8);
\fill (25.7, 3.8) circle (.13);
\fill (29.3, -1) circle (.13);
\node at (27.5, -2.5) {twist: $\frac{1}{4}$};
\end{scope}

\end{tikzpicture}
\end{center}
\caption{5 infinitesimal shapes. From left to right: an IET on a infinitesimal slit, three infinitesimal cylinders with moduli $\frac{1}{5}$, $\frac{4}{5}$ and $20$ respectively, and an infinitely thin cylinder with length 5 (note that any length in $[0, \infty]$ are allowed for thin cylinders). All of these shapes have orientation $\frac{6\pi}{5}$ as highlighted in dashed red, and have twist $\frac{1}{4} \in [0,1]/0 \sim 1 $ as can be seen from the shift between orange and blue singular points. For thin cylinders, we explicitly write the twist below the infinitesimal shape for convenience.}
\label{fig:cylinders}
\end{figure}
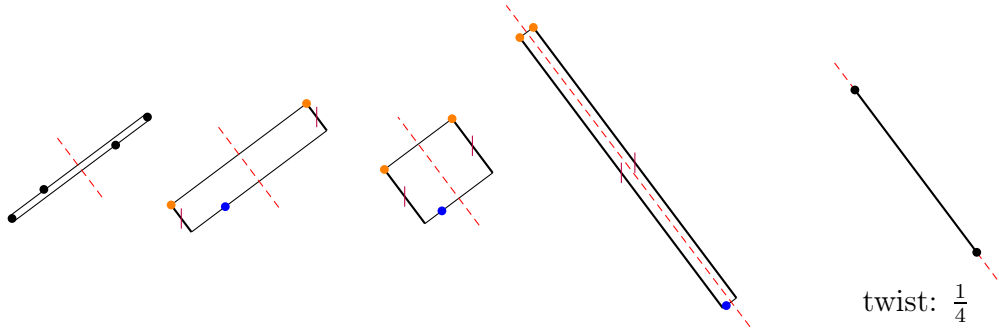


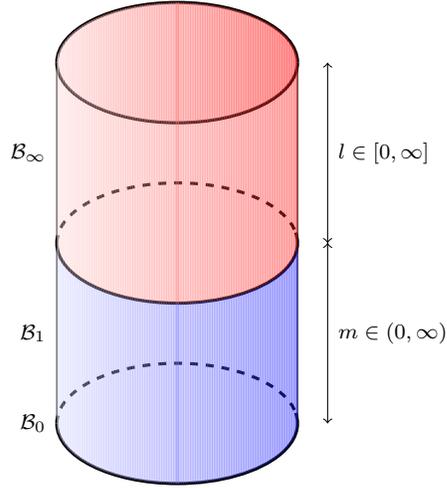
\begin{figure}
\begin{center}
\begin{tikzpicture}[scale=.8]
    \draw[very thick] (-2,6) arc (180:360:2cm and 1cm);
    \draw[very thick] (-2,6) arc (180:0:2cm and 1cm);
    \draw[very thick] (-2,3) arc (180:370:2cm and 1cm);
    \draw[very thick, dashed] (-2,3) arc (180:10:2cm and 1cm);
    \draw[very thick] (-2,0) arc (180:370:2cm and 1cm);
    \draw[very thick, dashed] (-2,0) arc (180:10:2cm and 1cm);

    \draw (-2,0) node[draw=none,fill=none,font=\scriptsize, left] {$\Bound_0$};
    \draw(-2,0)--(-2,3) node[draw=none,fill=none,font=\scriptsize,left,midway] {$\Bound_1$};
    \draw(2,0)--(2,3);
    \draw(-2,3)--(-2,6) node[draw=none,fill=none,font=\scriptsize,left,midway] {$\Bound_\infty$};
    \draw(2,3)--(2,6);

    \shade[left color=red!15!white,right color=red!90!white,opacity=0.4] (0, 6) circle (2cm and 1cm);
    \shade[left color=red!15!white,right color=red!90!white,opacity=0.3] (-2, 6) arc (180:360:2cm and 1cm) -- (2,3) arc (360:180:2cm and 1cm) -- cycle;
    \shade[left color=blue!15!white,right color=blue!90!white,opacity=0.3] (-2, 3) arc (180:360:2cm and 1cm) -- (2,0) arc (360:180:2cm and 1cm) -- cycle;

    \draw[<->] (2.5,0)--(2.5,3) node[draw=none,fill=none,font=\scriptsize,right,midway] {$m\in (0,\infty)$};
    \draw[<->] (2.5,3)--(2.5,6) node[draw=none,fill=none,font=\scriptsize,right,midway] {$l\in [0,\infty]$};

\end{tikzpicture}
\end{center}
\caption{The slice of $\Bound =\Bound_0 \sqcup \Bound_1 \sqcup \Bound_\infty$ for which the orientation $\theta$ is fixed is a topological closed cylinder. In this slice, elements of $\Bound_0$ form a circle, corresponding to all choice of a twist $t\in S^1$. Similarly, elements of $\Bound_1$ form a cylinder with choices of a modulus $m\in (0,\infty)$ and of a twist $t\in S^1$, and elements of $\Bound_\infty$ form a cylinder with choices of a length $l\in [0,\infty]$ and of a twist $t$. $\Bound$ is the product of this figure with the circle, corresponding with all choices of $\theta\in S^1$.}
\label{fig:Bound}
\end{figure}


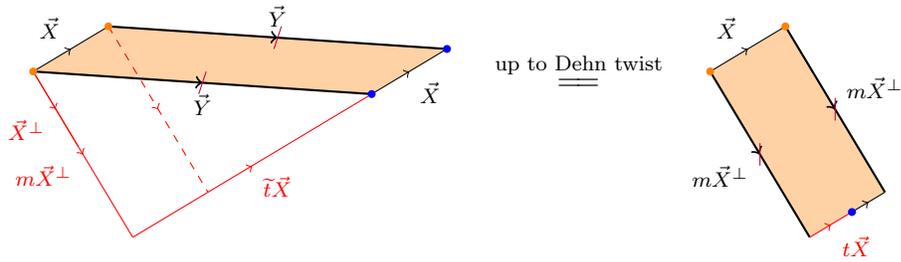
\begin{figure}
\begin{center}
\begin{tikzpicture}[scale=10]

\fill [fill=orange!35, draw = none] (0, 0) -- (.1, .06) -- (.55, .03) -- (.45, -.03) --(0, 0);

\draw[red,middlearrow={>}, dashed] (.1,.06)--(.23235,-.16059); 

\draw[middlearrow={>}] (0, 0)--(.1,.06) node[draw=none,fill=none,font=\scriptsize,above left,midway] {$\vec{X}$};
\draw[middlearrow={>}] (.45, -.03)--(.55,.03) node[draw=none,fill=none,font=\scriptsize,below right,midway] {$\vec{X}$};
\draw[thick,middlearrow={>}] (0, 0)--(.45,-.03) node[draw=none,fill=none,font=\scriptsize,below,midway] {$\vec{Y}$};
\draw[thick,middlearrow={>}] (.1, .06)--(.55,.03) node[draw=none,fill=none,font=\scriptsize,above,midway] {$\vec{Y}$};
\draw[-, purple] (.22,-.03)--(.23, 0);
\draw[-, purple] (.33,.06)--(.32, .03);

\draw[red,middlearrow={>}] (0,0)--(.13235,-.22059) node[red,draw=none,fill=none,font=\scriptsize,below left,midway] {$m \vec{X}^\perp$};
\draw[red,middlearrow={>}] (.13235,-.22059)--(.45,-.03) node[red,draw=none,fill=none,font=\scriptsize,below right,midway] {$\widetilde{t} \vec{X}$};
\draw[red,middlearrow={>}] (0, 0)--(.06,-.1) node[red,draw=none,fill=none,font=\scriptsize,below left,midway] {$\vec{X}^\perp$};

\fill[orange] (0, 0) circle (.15pt);
\fill[orange] (.1, .06) circle (.15pt);
\fill[blue] (.45, -.03) circle (.15pt);
\fill[blue] (.55, .03) circle (.15pt);

\begin{scope} [shift={(.2,0)}]

\fill [fill=orange!35, draw = none] (.7, 0) -- (.83235, -.22059) -- (.93235, -.16059) -- (.8, .06) --(.7, 0);
\draw[middlearrow={>}] (.7, 0)--(.8,.06) node[draw=none,fill=none,font=\scriptsize,above left,midway] {$\vec{X}$};
\draw[middlearrow={>}] (.88841, -.18696)--(.93235,.-.16059); 
\draw[thick,middlearrow={>}] (.7,0)--(.83235,-.22059) node[draw=none,fill=none,font=\scriptsize,below left,midway] {$m \vec{X}^\perp$};
\draw[thick,middlearrow={>}] (.8,.06)--(.93235,-.16059) node[draw=none,fill=none,font=\scriptsize,above right,midway] {$m \vec{X}^\perp$};

\draw[red,middlearrow={>}] (.83235, -.22059)--(.88841,-.18696) node[red,draw=none,fill=none,thick,font=\scriptsize,below right,midway] {$t\vec{X}$};
\fill[orange] (.7, 0) circle (.15pt);
\fill[orange] (.8, .06) circle (.15pt);
\fill[blue] (.88841, -.18696) circle (.15pt);
\draw[-, purple] (.766175,-.0953)--(.766175, -.1253);
\draw[-, purple] (.866175,-.0353)--(.866175, -.0653);

\end{scope}

\node at (.725, 0) {$\myeq$};
\end{tikzpicture}
\end{center}
\caption{Left: the cylinder $\Cyl_{A, C}$ is defined by a parallelogram with vectors $\vec{X} =(2A-1,2C)$ and $\vec{Y} = (1-A, -C)$. The two edges labeled $\vec{Y}$ are glued together, whereas the edges labeled $\vec{X}$ are glued to different segments of the surface $N_{A,C}$. Right: we put $\Cyl$ into its canonical form with $t =\widetilde{t} \mod 1$. We have $\vec{X^{\perp}}=(2C, -(2A-1))$, $m =-\frac{X\wedge Y}{\|X\|^2}$ and $\widetilde{t} = \frac{X\cdot Y}{\|X\|^2}$.}
\label{fig:parallelogram}
\end{figure}

Going back to the example of the WMS pushed along $v_0$ (see Chapter \ref{chap:push}), we will look at the ``phase-space'' of $A, B, C$ with A close to $\frac{1}{2}$, $B, C$ close to 0 and $C>0$. 
In fact, $B$ will play absolutely no role in the discussion so we set it to 0.
The cylinder $\Cyl_{A,C}$ defined by a parallelogram as in Figure \ref{fig:convex_quad} is simple, and its circumpherence goes to 0 as $A\to \frac{1}{2}$ and $C\to 0$.
In order to see how $N_{A,C}$ approaches the boundary of stratum as $(A,C)\to (\frac{1}{2}, 0)$, let us look at the height $h_{A,C}$ of $\Cyl_{A,C}$, its modulus $m_{A,C}$ and its twist $t_{A,C}$. Elementary computations give the following lemma.

\begin{lem}
\label{lem:spirals}
\begin{align}
X_{A,C} = 2 \cdot (A-\frac{1}{2},C) \\
h_{A,C} =-\frac{X_{A,C}\wedge Y_{A,C}}{\|X_{A,C}\|} = \frac{C}{\sqrt{(2A-1)^2+4C^2}} \\
m_{A,C}  =-\frac{X_{A,C}\wedge Y_{A,C}}{\|X_{A,C}\|^2} = \frac{C}{(2A-1)^2+4C^2} \\
t_{A,C} =\widetilde{t_{A,C}} \mod 1 \text{, with } \widetilde{t_{A,C}} = \frac{X_{A,C}\cdot Y_{A,C}}{\|X_{A,C}\|^2}=\frac{(2A-1)(1-A)-2C^2}{(2A-1)^2+4C^2}
\end{align} 

(1) implies that $\Cyl_{A,C}$ has same direction as $(A-\frac{1}{2}, C)$ and circumpherence twice the norm $\|(A-\frac{1}{2}, C)\|$.

(2) implies that the locus where $h_{A,C}$ is constant and equal to $h$ is the union of the half-lines passing through $(A,C) = (\frac{1}{2}, 0)$ and with slope $s=\pm\frac{2h}{\sqrt{1-4h^2}}$. 
Note that as $|s|\to \infty$, $h= \frac{|s|}{2\sqrt{1+s^2}} \to \frac{1}{2}$ and as $s\to 0$, $h\to 0$. 

(3) implies that the locus where $m_{A,C}$ is constant and equal to $m$ is the circle passing through $(A, C) = (\frac{1}{2}, 0)$ and with center $(0,\frac{1}{8m})$.

(4) implies that the locus where $\widetilde{t_{A,C}}$ is constant and equal to $\widetilde{t}$ is the half-circle passing through $(\frac{1}{2},0)$ and $(\frac{1+\widetilde{t}}{1+2\widetilde{t}},0)$ and with center on the $A$-axis if $\widetilde{t}>-\frac{1}{2}$, the vertical half-line passing through $(\frac{1}{2},0)$ if $\widetilde{t}=-\frac{1}{2}$ and the half-circle passing through $(\frac{1}{2},0)$ and $(\frac{1+\widetilde{t}}{1+2\widetilde{t}},0)$ and with center on the $A$-axis if $\widetilde{t}<-\frac{1}{2}$. 
Note that as $p=\frac{1+\widetilde{t}}{1+2\widetilde{t}} \to \frac{1}{2}^+$, $\widetilde{t}=\frac{1-p}{2p-1}\to \infty$ and as $p\to \frac{1}{2}^-$, $\widetilde{t}\to -\infty$.

See Figures \ref{fig:phasespace1}, \ref{fig:phasespace2} and \ref{fig:phasespace3} for these loci.
\end{lem}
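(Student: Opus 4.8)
The plan is to carry out the elementary computations promised in the statement, organized around the passage to canonical cylinder form depicted in Figure~\ref{fig:parallelogram}. First I would record that $\Cyl_{A,C}$, the cylinder arising in Figure~\ref{fig:convex_quad}, is represented in Figure~\ref{fig:parallelogram} as the parallelogram spanned by the waist vector $\vec X = X_{A,C} = (2A-1,2C)$ and the side vector $\vec Y = Y_{A,C} = (1-A,-C)$, with the two $\vec Y$-edges glued; formula~(1) is then nothing more than $(2A-1,2C) = 2(A-\tfrac12,C)$, and it shows at once that $\Cyl_{A,C}$ has the direction of $(A-\tfrac12,C)$ and circumference $\|\vec X\| = 2\|(A-\tfrac12,C)\|$. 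Next I would recall the general recipe for putting such a parallelogram in canonical form: a simple cylinder spanned by $\vec X$ and $\vec Y$ has height $h = -\dfrac{\vec X\wedge\vec Y}{\|\vec X\|}$, modulus $m = \dfrac{h}{\|\vec X\|} = -\dfrac{\vec X\wedge\vec Y}{\|\vec X\|^2}$, and twist $\widetilde t = \dfrac{\vec X\cdot\vec Y}{\|\vec X\|^2}$, the actual twist being $t = \widetilde t \bmod 1$ since replacing $\vec Y$ by $\vec Y+\vec X$ is a Dehn twist (the ``$\myeq$'' of Figure~\ref{fig:parallelogram}). Formulas (2)--(4) then drop out of the two scalar computations
\[
\vec X\wedge\vec Y = (2A-1)(-C)-(2C)(1-A) = -C,\qquad \vec X\cdot\vec Y = (2A-1)(1-A)-2C^2,
\]
together with $\|\vec X\|^2 = (2A-1)^2+4C^2$.

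For the geometric ``implications'' I would pass to the shifted coordinates $u = A-\tfrac12$, $v = C$, so that $\|\vec X\|^2 = 4(u^2+v^2)$, $\vec X\wedge\vec Y = -v$, and $\vec X\cdot\vec Y = u - 2(u^2+v^2)$, and then simplify. For~(2), $h = v/(2\sqrt{u^2+v^2})$, so $h$ constant forces $v^2(1-4h^2) = 4h^2u^2$, i.e.\ $v/u = \pm\,2h/\sqrt{1-4h^2}$; since $C>0$ this is a union of two rays out of $(\tfrac12,0)$, and the asymptotics as $|s|\to\infty$ or $s\to0$ come from inverting $s = 2h/\sqrt{1-4h^2}$ into $h = |s|/(2\sqrt{1+s^2})$. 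For~(3), $m = v/(4(u^2+v^2))$, so $m$ constant gives $u^2+(v-\tfrac1{8m})^2 = (\tfrac1{8m})^2$, the circle through the origin with centre $(0,\tfrac1{8m})$. For~(4), $\widetilde t$ constant gives $(2\widetilde t+1)(u^2+v^2) = u/2$; completing the square when $\widetilde t\neq-\tfrac12$ yields a half-circle through $(\tfrac12,0)$ with centre on the $A$-axis and passing also through $A = \frac{1+\widetilde t}{1+2\widetilde t}$, while $\widetilde t = -\tfrac12$ degenerates to $u=0$, the vertical half-line; the limits of $p = \frac{1+\widetilde t}{1+2\widetilde t}$ come from $\widetilde t = \frac{1-p}{2p-1}$.

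There is no genuine obstacle here: the proof is bookkeeping for the canonical form of a sheared parallelogram followed by recognizing a few conics by completing the square. The only points deserving a line of care are the sign conventions in $\vec X\wedge\vec Y$, chosen so that $h$ and $m$ are positive precisely in the regime $C>0$ that interests us, and the fact that the twist is only defined modulo $1$ --- which is exactly the ``up to Dehn twist'' reduction indicated in Figure~\ref{fig:parallelogram}. I would present the whole argument as the short computation it is.
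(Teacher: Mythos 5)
Your proof is correct and proceeds exactly as the paper intends: the paper itself only states ``elementary computations give the following lemma,'' relying on the conventions of Figure~\ref{fig:parallelogram}, and you have simply carried out those computations ($\vec X\wedge\vec Y=-C$, $\vec X\cdot\vec Y=(2A-1)(1-A)-2C^2$, $\|\vec X\|^2=(2A-1)^2+4C^2$) and recognized the level sets by completing the square in the shifted coordinates $u=A-\tfrac12$, $v=C$. Nothing to change.
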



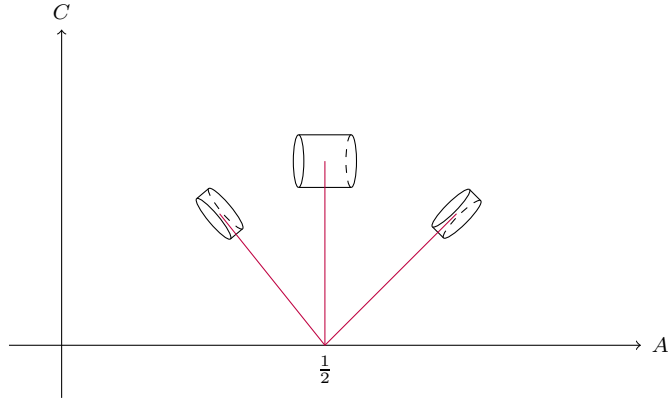
\begin{figure}
\begin{center}
\begin{tikzpicture}[scale=7]

\begin{scope} [shift={(.3,.25)}, rotate = 130, scale = .05]
    \draw (-1,.3) arc (180:360:1 and .2);
    \draw (-1,.3) arc (180:0:1 and .2);
    \draw (-1,-.3) arc (180:370:1 and .2);
    \draw[dashed] (-1,-.3) arc (180:10:1 and .2);

    \draw(-1,-.3)--(-1,.3);
    \draw(1,-.3)--(1,.3);
\end{scope}

\begin{scope} [shift={(.5, .35)}, rotate = 90, scale = .05]
    \draw (-1,1) arc (180:360:1 and .2);
    \draw (-1,1) arc (180:0:1 and .2);
    \draw (-1,-1) arc (180:370:1 and .2);
    \draw[dashed] (-1,-1) arc (180:10:1 and .2);

    \draw(-1,-1)--(-1,1);
    \draw(1,-1)--(1,1);
\end{scope}

\begin{scope} [shift={(.75, .25)}, rotate = 45, scale = .05]
    \draw (-1,.3) arc (180:360:1 and .2);
    \draw (-1,.3) arc (180:0:1 and .2);
    \draw (-1,-.3) arc (180:370:1 and .2);
    \draw[dashed] (-1,-.3) arc (180:10:1 and .2);

    \draw(-1,-.3)--(-1,.3);
    \draw(1,-.3)--(1,.3);
\end{scope}

    \draw[->] (-.1,0)--(1.1,0);
    \draw (1.1,0) node[draw=none,fill=none,font=\scriptsize,right] {$A$};
    \draw[->] (0,-.1)--(0,.6);
    \draw (0,.6) node[draw=none,fill=none,font=\scriptsize,above] {$C$};

    \draw (.5,0) node[draw=none,fill=none,font=\scriptsize,below] {$\frac{1}{2}$};

%

    \draw[purple] (.5,0)--(.5, .35);

    \draw[purple] (.5,0)--(.75,.25);

    \draw[purple] (.5,0)--(.3,.25);

\end{tikzpicture}
\end{center}
\caption{A vector $(A-1/2, C)$ is equal to $\frac{1}{2} \dev(N_{A,B,C}, \gamma)$ where $\gamma $ is one of the saddle connections bounding the cylinder in Figure \ref{fig:convex_quad}. In particular, as $A \to \frac{1}{2}$ this cylinder becomes horizontal ($\gamma$ becomes vertical), and for $A < \frac{1}{2}$ the cylinder is flipped, i.e. $\RE(\dev(N_{A,B,C}, \gamma))<0$. We superimposed a sketch of the cylinder on top of $(A, C)$ for some values of $A$ and $C$.}
\label{fig:phasespace0}
\end{figure}


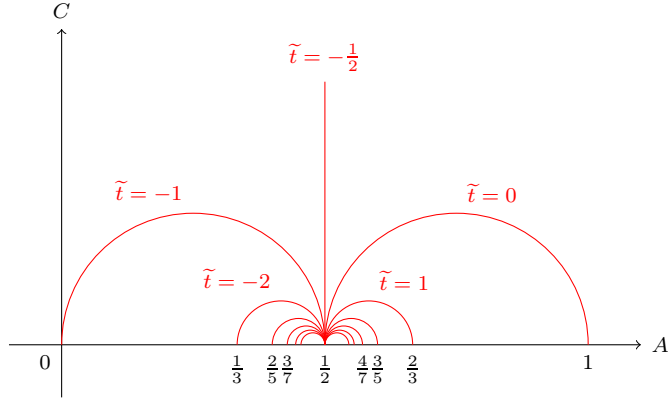
\begin{figure}
\begin{center}
\begin{tikzpicture}[scale=7]

    \draw[->] (-.1,0)--(1.1,0);
    \draw (1.1,0) node[draw=none,fill=none,font=\scriptsize,right] {$A$};
    \draw[->] (0,-.1)--(0,.6);
    \draw (0,.6) node[draw=none,fill=none,font=\scriptsize,above] {$C$};

    \draw (.5,0) node[draw=none,fill=none,font=\scriptsize,below] {$\frac{1}{2}$};
    \draw (.5,.5) node[red,draw=none,fill=none,font=\scriptsize,above] {$\widetilde{t}=-\frac{1}{2}$};

    \draw[red] (.5,0) arc (180:0:.25);
    \draw (1,0) node[draw=none,fill=none,font=\scriptsize,below] {1};
    \draw (.75,.25) node[red,draw=none,fill=none,font=\scriptsize,above right] {$\widetilde{t}=0$};

    \draw[red] (.5,0) arc (180:0:.083333);
    \draw (.666667,0) node[draw=none,fill=none,font=\scriptsize,below] {$\frac{2}{3}$};
    \draw (.583333,.083333) node[red,draw=none,fill=none,font=\scriptsize,above right] {$\widetilde{t}=1$};

    \draw[red] (.5,0) arc (180:0:.05);
    \draw (.6,0) node[draw=none,fill=none,font=\scriptsize,below] {$\frac{3}{5}$};
    \draw[red] (.5,0) arc (180:0:.035714);
    \draw (.571429,0) node[draw=none,fill=none,font=\scriptsize,below] {$\frac{4}{7}$};
    \draw[red] (.5,0) arc (180:0:.0277778);
    \draw[red] (.5,0) arc (180:0:.0227273);

    \draw[red] (.5,0) arc (0:180:.25);
    \draw (0,0) node[draw=none,fill=none,font=\scriptsize,below left] {0};
    \draw (.25,.25) node[red,draw=none,fill=none,font=\scriptsize,above left] {$\widetilde{t}=-1$};
    
    \draw[red] (.5,0) arc (0:180:.083333);
    \draw (.333333,0) node[draw=none,fill=none,font=\scriptsize,below] {$\frac{1}{3}$};
    \draw (.416667,.083333) node[red,draw=none,fill=none,font=\scriptsize,above left] {$\widetilde{t}=-2$};
    
    \draw[red] (.5,0) arc (0:180:.05);
    \draw (.4,0) node[draw=none,fill=none,font=\scriptsize,below] {$\frac{2}{5}$};
    \draw[red] (.5,0) arc (0:180:.035714);
    \draw (.428571,0) node[draw=none,fill=none,font=\scriptsize,below] {$\frac{3}{7}$};
    \draw[red] (.5,0) arc (0:180:.0277778);
    \draw[red] (.5,0) arc (0:180:.0227273);

    \draw[red] (.5,0)--(.5, .5);
\end{tikzpicture}
\end{center}
\caption{Loci for which $\widetilde{t_{A,C}}$ is constant are half-circles centered on the $A$-axis.}
\label{fig:phasespace1}
\end{figure}


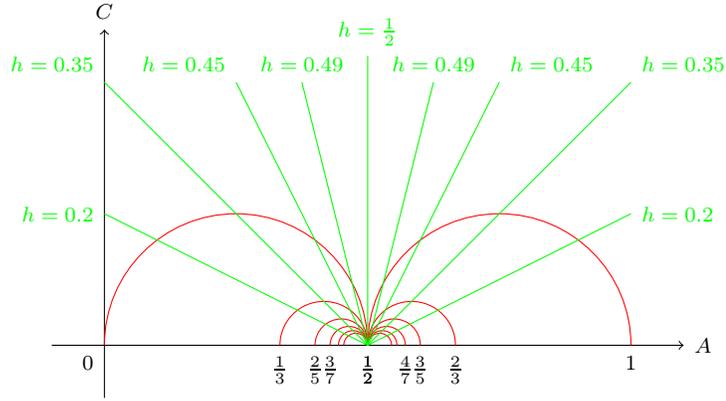
\begin{figure}
\begin{center}
\begin{tikzpicture}[scale=7]

    \draw[->] (-.1,0)--(1.1,0);
    \draw (1.1,0) node[draw=none,fill=none,font=\scriptsize,right] {$A$};
    \draw[->] (0,-.1)--(0,.6);
    \draw (0,.6) node[draw=none,fill=none,font=\scriptsize,above] {$C$};

    \draw (.5,0) node[draw=none,fill=none,font=\scriptsize,below] {$\frac{1}{2}$};
    \draw (.5,.55) node[green,draw=none,fill=none,font=\scriptsize,above] {$h=\frac{1}{2}$};

    \draw (.5,0) node[draw=none,fill=none,font=\scriptsize,below] {$\frac{1}{2}$};
    \draw[red] (.5,0) arc (180:0:.25);
    \draw (1,0) node[draw=none,fill=none,font=\scriptsize,below] {1};
    \draw[red] (.5,0) arc (180:0:.083333);
    \draw (.666667,0) node[draw=none,fill=none,font=\scriptsize,below] {$\frac{2}{3}$};
    \draw[red] (.5,0) arc (180:0:.05);
    \draw (.6,0) node[draw=none,fill=none,font=\scriptsize,below] {$\frac{3}{5}$};
    \draw[red] (.5,0) arc (180:0:.035714);
    \draw (.571429,0) node[draw=none,fill=none,font=\scriptsize,below] {$\frac{4}{7}$};
    \draw[red] (.5,0) arc (180:0:.0277778);
    \draw[red] (.5,0) arc (180:0:.0227273);

    \draw[red] (.5,0) arc (0:180:.25);
    \draw (0,0) node[draw=none,fill=none,font=\scriptsize,below left] {0};
    \draw[red] (.5,0) arc (0:180:.083333);
    \draw (.333333,0) node[draw=none,fill=none,font=\scriptsize,below] {$\frac{1}{3}$};
    \draw[red] (.5,0) arc (0:180:.05);
    \draw (.4,0) node[draw=none,fill=none,font=\scriptsize,below] {$\frac{2}{5}$};
    \draw[red] (.5,0) arc (0:180:.035714);
    \draw (.428571,0) node[draw=none,fill=none,font=\scriptsize,below] {$\frac{3}{7}$};
    \draw[red] (.5,0) arc (0:180:.0277778);
    \draw[red] (.5,0) arc (0:180:.0227273);

    \draw[green] (.5,0)--(.5, .55);

    \draw[green] (.5,0)--(1,.25);
    \draw (1,.25) node[green,draw=none,fill=none,font=\scriptsize,right] {$h=0.2$};
    \draw[green] (.5,0)--(1,.5);
    \draw (1,.5) node[green,draw=none,fill=none,font=\scriptsize,above right] {$h=0.35$};
    \draw[green] (.5,0)--(.75,.5);
    \draw (.75,.5) node[green,draw=none,fill=none,font=\scriptsize,above right] {$h=0.45$};
    \draw[green] (.5,0)--(.625,.5);
    \draw (.625,.5) node[green,draw=none,fill=none,font=\scriptsize,above] {$h=0.49$};

    \draw[green] (.5,0)--(0,.25);
    \draw (0,.25) node[green,draw=none,fill=none,font=\scriptsize,left] {$h=0.2$};
    \draw[green] (.5,0)--(0,.5);
    \draw (0,.5) node[green,draw=none,fill=none,font=\scriptsize,above left] {$h=0.35$};
    \draw[green] (.5,0)--(.25,.5);
    \draw (.25,.5) node[green,draw=none,fill=none,font=\scriptsize,above left] {$h=0.45$};
    \draw[green] (.5,0)--(.375,.5);
    \draw (.375,.5) node[green,draw=none,fill=none,font=\scriptsize,above] {$h=0.49$};

\end{tikzpicture}
\end{center}
\caption{Loci for which $h_{A,C}$ is constant are half-lines ending at $(\frac{1}{2},0)$.}
\label{fig:phasespace2}
\end{figure}


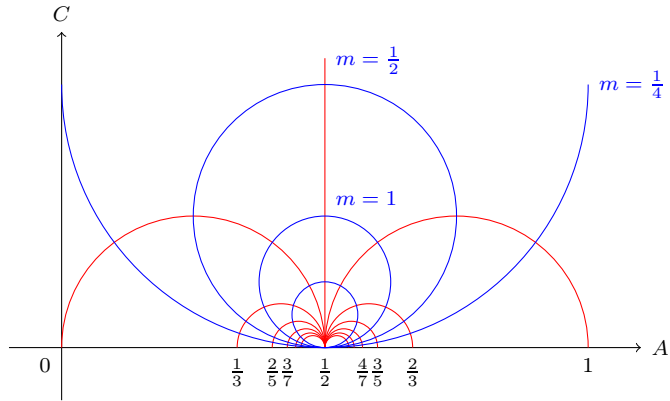
\begin{figure}
\begin{center}
\begin{tikzpicture}[scale=7]

    \draw[->] (-.1,0)--(1.1,0);
    \draw (1.1,0) node[draw=none,fill=none,font=\scriptsize,right] {$A$};
    \draw[->] (0,-.1)--(0,.6);
    \draw (0,.6) node[draw=none,fill=none,font=\scriptsize,above] {$C$};

    \draw (.5,0) node[draw=none,fill=none,font=\scriptsize,below] {$\frac{1}{2}$};
    \draw[red] (.5,0) arc (180:0:.25);
    \draw (1,0) node[draw=none,fill=none,font=\scriptsize,below] {1};
    \draw[red] (.5,0) arc (180:0:.083333);
    \draw (.666667,0) node[draw=none,fill=none,font=\scriptsize,below] {$\frac{2}{3}$};
    \draw[red] (.5,0) arc (180:0:.05);
    \draw (.6,0) node[draw=none,fill=none,font=\scriptsize,below] {$\frac{3}{5}$};
    \draw[red] (.5,0) arc (180:0:.035714);
    \draw (.571429,0) node[draw=none,fill=none,font=\scriptsize,below] {$\frac{4}{7}$};
    \draw[red] (.5,0) arc (180:0:.0277778);
    \draw[red] (.5,0) arc (180:0:.0227273);

    \draw[red] (.5,0) arc (0:180:.25);
    \draw (0,0) node[draw=none,fill=none,font=\scriptsize,below left] {0};
    \draw[red] (.5,0) arc (0:180:.083333);
    \draw (.333333,0) node[draw=none,fill=none,font=\scriptsize,below] {$\frac{1}{3}$};
    \draw[red] (.5,0) arc (0:180:.05);
    \draw (.4,0) node[draw=none,fill=none,font=\scriptsize,below] {$\frac{2}{5}$};
    \draw[red] (.5,0) arc (0:180:.035714);
    \draw (.428571,0) node[draw=none,fill=none,font=\scriptsize,below] {$\frac{3}{7}$};
    \draw[red] (.5,0) arc (0:180:.0277778);
    \draw[red] (.5,0) arc (0:180:.0227273);

    \draw[red] (.5,0)--(.5, .55);

    \draw[blue] (0, .5) arc (180:360:.5);
    \draw (1,.5) node[blue,draw=none,fill=none,font=\scriptsize,right] {$m=\frac{1}{4}$};
    \draw[blue] (.5, .25) circle (.25);
    \draw (.5,.5) node[blue,draw=none,fill=none,font=\scriptsize,above right] {$m=\frac{1}{2}$};
    \draw[blue] (.5, .125) circle (.125);
    \draw (.5,.25) node[blue,draw=none,fill=none,font=\scriptsize,above right] {$m=1$};
    \draw[blue] (.5, .0625) circle (.0625);

\end{tikzpicture}
\end{center}
\caption{Loci for which $m_{A,C}$ is constant are circles centered on the $C$-axis.}
\label{fig:phasespace3}
\end{figure}

\begin{thm}
\label{thm:spiraling}
Let $\Push = \{ N_{A,B,C}; \; \frac15 <A<1,\, -\frac12<B<\frac12,\, 0<C<\frac12 \} \subset \Push(\MM_{\WMS})$. Then the following holds.
\begin{itemize}
	\item For any height $h\in [0, \frac12)$, angle $\theta \in [0, \pi]$ such that $\tan(\theta)=\pm\frac{2h}{\sqrt{1-4h^2}}$ and twist $t\in S^1$, $\Push$ approaches $(N_{\frac12,0,0}, S)$ in a spiraling way, with $S$ the thin cylinder with angle $\theta$, height $h$ and twist $t$.
	\item For any twist $t \in S^1$, $\Push$ approaches $(N_{\frac12,0,0}, S)$ in a non-spiraling way, with $S$ the thin cylider with angle $\pi/2$, height $\frac12$ and twist $t$.
	\item For any modulus $m\in [0, \infty]$, angle $\theta\in \{0, \pi\}$ and twist $t\in S^1$, $\Push$ approaches $(N_{\frac12,0,0}, S)$ in a spiraling way, with $S$ the infinitesimal cylinder with angle $\theta$, modulus $m$ and twist $t$. Note that the case $m=0$ corresponds to infinitesimal IETs.
\end{itemize}
 
\end{thm}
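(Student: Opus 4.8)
\section*{Proof proposal for Theorem~\ref{thm:spiraling}}

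The plan is to reduce the whole statement to the explicit formulas of Lemma~\ref{lem:spirals}, together with the following input distilled from the proof of Theorem~\ref{thm:balls}: whenever $(A,C)\to(\frac12,0)$ with $C>0$ (and $B\to 0$), the surface $N'_{A,B,C}$ obtained from $N_{A,B,C}$ by deleting the cylinder $\Cyl_{A,C}$ and regluing the two resulting slits converges, in the Mirzakhani-Wright topology, to $N_{\frac12,0,0}$ — indeed $N_{\frac12,0,0}$ was identified there precisely as that Mirzakhani-Wright limit with the degenerating cylinder dropped (compare Lemmas~\ref{lem:ex1} and~\ref{lem:ex2}). Granting this, deciding whether $\Push$ approaches a pair $(N_{\frac12,0,0},S)$, and in which manner, amounts to studying the single map $(A,C)\mapsto\Cyl_{A,C}\in\Bound$ given by Lemma~\ref{lem:spirals} on a punctured neighbourhood of $(\frac12,0)$ in the half-plane $\{C>0\}$: the set $\Push$ approaches $(N_{\frac12,0,0},S)$ iff $S$ lies in the closure of the image of this map, and it does so in a spiraling way iff $S$ is not in the closure of the image of any continuous arc in this region with endpoint $(\frac12,0)$.

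First I would pass to polar-type coordinates $a=A-\frac12$, $r=\sqrt{a^2+C^2}$, $\phi=\arg(a+iC)\in(0,\pi)$ centred at $(\frac12,0)$, and rewrite Lemma~\ref{lem:spirals} as $\theta_{A,C}=\phi$, $\ h_{A,C}=\tfrac12\sin\phi$, $\ m_{A,C}=\tfrac{\sin\phi}{4r}$, and $\widetilde t_{A,C}=\tfrac{\cos\phi}{4r}-\tfrac12$ (so $t_{A,C}=\widetilde t_{A,C}\bmod 1$). The whole behaviour is then governed by whether $\phi$ stays away from $\pi/2$ or not, since $\cos\phi$ controls whether $\widetilde t_{A,C}$ can remain bounded as $r\to 0$: if $\phi\to\phi_0$ with $\cos\phi_0\neq 0$ and $r\to 0$ then $\widetilde t_{A,C}\to\pm\infty$ (sign of $\cos\phi_0$), whereas if $\phi\to\pi/2$ then $\widetilde t_{A,C}$ may well converge.

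For the two spiraling statements (first and third bullets) I would argue in two halves. To see that $\Push$ does approach the listed pairs, choose explicit approach curves through $(\frac12,0)$: for a thin cylinder of height $h\in(0,\frac12)$ and either of the two admissible angles, a radial half-line of slope $s=\pm\frac{2h}{\sqrt{1-4h^2}}$ on the side $C>0$, along which $\theta$ and $h$ are constant and $m\to\infty$; for $h=0$ (thin), or for an infinitesimal cylinder of modulus $m\in(0,\infty)$, or for an infinitesimal IET ($m=0$), with $\theta\in\{0,\pi\}$, a curve tangent to the $A$-axis of suitable order (for a prescribed genuine modulus $m$, the circle $\{m_{A,C}=m\}$ of Lemma~\ref{lem:spirals} itself), along which $\theta$, $h$ and $m$ attain the prescribed limits. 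Along any such one-parameter family $\widetilde t_{A,C}$ is continuous in the remaining parameter and tends to $\pm\infty$ as the parameter approaches $(\frac12,0)$; hence by the intermediate value theorem one can extract a sequence along which $\widetilde t_{A,C}\in t+\ZZ$, realizing any prescribed twist $t\in S^1$. To see that no continuous arc realizes it, suppose $\tau\mapsto(A_\tau,B_\tau,C_\tau)$ is a continuous arc in $\Push$ whose simple cylinders $\Cyl_{A_\tau,C_\tau}$ converge in $\Bound$ to such an $S$; then $\theta_\tau\to\theta$, $h_\tau\to h$, $r_\tau\to 0$, forcing $\phi_\tau\to\phi_0$ with $\cos\phi_0\neq 0$ (as $\theta\neq\pi/2$ in all these cases), hence $\widetilde t_\tau\to\pm\infty$; a continuous real function tending to $\pm\infty$ cannot have convergent fractional part, which contradicts $t_\tau\to t$. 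For the second bullet the angle is exactly $\pi/2$, and here I would exhibit a single continuous arc along which $\widetilde t_\tau$ converges: fixing a representative $\widetilde t_0\in\RR$ of $t$, take $B_\tau=0$, $C_\tau=\tau$, $a_\tau=4(\widetilde t_0+\tfrac12)\tau^2$; then $\widetilde t_\tau=\dfrac{\widetilde t_0+\frac12}{1+16(\widetilde t_0+\frac12)^2\tau^2}-\dfrac12\to\widetilde t_0$, while $\phi_\tau\to\pi/2$, $h_\tau\to\tfrac12$, $r_\tau\to 0$ and $m_\tau\to\infty$, so this arc (together with the convergence $N'_\tau\to N_{\frac12,0,0}$) converges to $(N_{\frac12,0,0},S)$ with $S$ the thin cylinder of angle $\pi/2$, height $\tfrac12$, twist $t$.

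The main obstacle, and essentially the only non-bookkeeping step, is the reduction in the first paragraph: one must be sure that along \emph{any} approach to $(\frac12,0)$ with $C>0$ the surface with $\Cyl_{A,C}$ excised converges in the Mirzakhani-Wright sense to $N_{\frac12,0,0}$ regardless of the path, i.e.\ that the ``rest of the surface'' depends continuously on $(A,B,C)$ at $(\frac12,0,0)$. This is exactly what the analysis of Chapter~\ref{chap:push} (the proof of Theorem~\ref{thm:balls}, and Lemmas~\ref{lem:ex1}--\ref{lem:ex2}) supplies; once it is in hand, the three bullets follow from the elementary study of $\widetilde t_{A,C}=\tfrac{\cos\phi}{4r}-\tfrac12$ sketched above.
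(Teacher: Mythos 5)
Your proof is correct and follows essentially the same route as the paper: the reduction to the map $(A,C)\mapsto\Cyl_{A,C}$ via Lemma~\ref{lem:spirals}, approaching sequences on constant-height (resp.\ constant-modulus) loci with twist swept through all of $S^1$ by the intermediate value theorem, the contradiction via $\widetilde t_{A,C}\to\pm\infty$ along any continuous approach arc for the spiraling cases, and an explicit approaching arc for the non-spiraling case. Your polar rewriting of Lemma~\ref{lem:spirals} and your parabolic arc $a_\tau=4(\widetilde t_0+\tfrac12)\tau^2$ (where the paper uses its circular arc of constant $\widetilde t$) are cosmetic variations on the same argument.
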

\begin{proof}
We start with the second claim which is easier to prove. Given a twist $t\in [0,1)$, let $\gamma: [0,1] \to \{(A,C); \; \frac15<A<1, \, 0<C<\frac12\}$ be an arc of the upper half circle centered on the line $C=0$ and passing through $(\frac12,0)$ and $(\frac{1+t}{1+2t})$, and with $\gamma(1) = (\frac12,0)$. Then letting $(A_x, C_x)=\gamma(x)$, Lemma \ref{lem:spirals} implies that the path $x \mapsto N_{A_x,0, C_x}$ converges to $(N_{\frac12, 0,0}, S)$ as $x\to 1$, as was to be shown.

We now prove the first claim, assuming without loss of generality that $t=0$. 
We first show that there is a sequence $(A_n, C_n)$ such that $N_{A_n, 0, C_n}$ converges to $(N_{\frac12,0,0}, S)$.
But from Lemma \ref{lem:spirals} and as can be seen in Figure \ref{fig:phasespace2}, we can achieve this by defining $(A_n, C_n)$ as the intersection of the half-line of slope $s = \frac{2h}{\sqrt{1-4h^2}}$ passing through $(\frac12, 0)$ and the half-circles as in the proof of the second claim. 
Now, let us assume by contradiction that there is a continuous path $\gamma: [0,1] \to \{(A,C); \; \frac15<A<1, \, 0<C<\frac12\}$ such that $N_{A_x, 0, C_x}$ converges to $(N_{\frac12,0,0}, S)$ as $x\to 1$. 
Since $N_{A_x,0,C_x}\to N_{\frac12, 0,0}$ in the sense of Mirzakhani-Wright, Theorem \ref{thm:balls} implies that $(A_x,C_x) \to (\frac12,0)$. 
Since it converges to $(N_{\frac12,0,0}, S)$, we see from Lemma \ref{lem:spirals} and Figure \ref{fig:phasespace2} that the slopes $s_x=C_x/A_x$ must converge to $\pm \frac{2h}{\sqrt{1-4h^2}}$, and that this prohibits $t_x = t_{A_x, C_x}$ from converging in $S^1$. Thus this claim is proven as well. The third claim is very similar.
\end{proof}

\chapter{Conclusion}

We have developed several techniques to study the geometry of the pushes $\Push(M, v)$, where $M$ is a Veech surface and $v$ is a suitable direction. 
These provide a better understanding of horocycle orbit closures in strata, in particular due to the description of the new ``infinite catastrophes in finite time'' phenomenon. 
Note that the example of $M_{\WMS}$ and $v_0$ studied in Section \ref{sec:WMSpush} was carefully chosen so that there is no local regluing around the accident $A=\frac12$ (see Theorem \ref{thm:balls}). 
In order to completely describe the geometry of $\overline{\Push(M_{\WMS}, v_0)}$, it would be necessary to understand if the local boundaries around the $A=\frac12$ and $C=0$ are global, i.e. that they are not identified with other local boundaries. 
As a next step, we would like to investigate, in the case of arbitrary Veech translations surfaces, how complicated local and global regluing could be.
Moreover, Remark \ref{rem:small_As} and Proposition \ref{prop:normal} together imply that for any Veech surface $M$ and direction $v$ such that $\Dir(M,v)$ is finite, $\Push(M, v)$ is an immersed manifold away from the push of the locus $\{ M_{A,B,C}; \; 0<A<A_0, B\in \RR, C=0 \}$ with some number $A_0$.
Because we are dealing with vectors that are not globally defined, some self-intersection phenomenon may appear, and in the future we would like to find a systematic method to predict if and when self-intersection occurs.

Note that it is crucial in Chapter \ref{chap:generalCase} that $M$ is a Veech surface, as this condition implies that if infinitely many catastrophes occur in finite time, the surface that shrinks to a point is made up of horizontal cylinders. 
Is it possible to find an example of a non-Veech surface $M$ and a suitable direction $v$ such that a non-periodic minimal subsurface shrinks to a point?
Is such a case, what would be the topology of $\Acc$ and the geometry of the push $\Push(M,v)$? Would it always be an immersed manifold with boundary?
More broadly, is it always true that horocycle orbit closures are manifold with boundary?

As suggested by Chapter \ref{chap:BS}, the study of how pushes $\Push(M,v)$ approach the Borel-Serre boundary is natural, and we expect it will yield interesting insight on how horocycle orbit closures approach the Borel-Serre boundary of strata. 

Another potential application of our techniques is related to the following question of Forni: for any horocycle-invariant ergodic measure $\mu$ on a stratum $\HH$, are the limits ${g_t}_\ast \mu$ as $t\to \pm \infty$ either $g_t$-invariant or do not converge in the space of probability measures of $\HH$? 
This question was answered affirmatively in \cite{bsw15} in the case $\HH=\HH^{\lbl}_1(1,1)$. The additional complexity studied in this thesis could potentially yield a negative answer in higher-dimensional strata.
Similarly, we are interested in the study of the limits $\Push(M, tv)$ as $t\to \pm \infty$. When these limits exist, do they admit extra invariance? If not, what can be said of their geometries?

\bibliographystyle{plain}
\bibliography{ewms}

\end{document}